\title{The high-dimensional cohomology of the moduli space of curves with level structures}
\author{Neil J. Fullarton\thanks{Supported in part by an AMS-Simons grant and an MSRI postdoctoral fellowship}\ \ and Andrew Putman\thanks{Supported in part by NSF grant DMS-1255350 and the Alfred P.\ Sloan Foundation}}
\date{November 29, 2017}
\DeclareFontFamily{U}{matha}{}
\DeclareFontShape{U}{matha}{m}{n}{
  <-5.5>    matha5
  <5.5-6.5> matha6 
  <6.5-7.5> matha7
  <7.5-8.5> matha8
  <8.5-9.5> matha9
  <9.5-11>  matha10
  <11->     matha12
}{}
\DeclareSymbolFont{matha}{U}{matha}{m}{n}
\DeclareFontFamily{U}{mathx}{\hyphenchar\font45}
\DeclareFontShape{U}{mathx}{m}{n}{<-> mathx10}{}
\DeclareSymbolFont{mathx}{U}{mathx}{m}{n}
\DeclareMathDelimiter{\ldbrack}{4}{matha}{"76}{mathx}{"30}
\DeclareMathDelimiter{\rdbrack}{5}{matha}{"77}{mathx}{"38}
\newcommand{\coloneq}{\mathrel{\resizebox{\widthof{$\mathord{=}$}}{\height}{ $\!\!\resizebox{1.2\width}{0.8\height}{\raisebox{0.23ex}{$\mathop{:}$}}\!\!=\!\!$ }}}
\theoremstyle{plain}
\newtheorem{theorem}{Theorem}[section]
\newtheorem{maintheorem}{Theorem}
\newtheorem{maintheoremprime}{Theorem}
\newtheorem{proposition}[theorem]{Proposition}
\newtheorem{lemma}[theorem]{Lemma}
\newtheorem{conjecture}[theorem]{Conjecture}
\newtheorem*{claim}{Claim}
\newtheorem{stepa}{Step}
\theoremstyle{definition}
\newtheorem{definition}[theorem]{Definition}
\theoremstyle{remark}
\newtheorem{remark}[theorem]{Remark}
\DeclareMathOperator{\Coker}{coker}
\DeclareMathOperator{\Mod}{Mod}
\DeclareMathOperator{\Sp}{Sp}
\DeclareMathOperator{\SL}{SL}
\newcommand\Moduli{\ensuremath{{\mathcal M}}}
\newcommand\C{\ensuremath{\mathbb{C}}}
\newcommand\Z{\ensuremath{\mathbb{Z}}}
\newcommand\Q{\ensuremath{\mathbb{Q}}}
\newcommand\Field{\ensuremath{\mathbb{F}}}
\DeclareMathOperator{\HH}{H}
\newcommand\RH{\ensuremath{\widetilde{\HH}}}
\DeclareMathOperator{\CC}{C}
\newcommand\RC{\ensuremath{\widetilde{\CC}}}
\DeclareMathOperator{\ZZ}{Z}
\newcommand\RZ{\ensuremath{\widetilde{\ZZ}}}
\DeclareMathOperator{\Interior}{Int}
\newcommand\Span[1]{\ensuremath{\langle #1 \rangle}}
\newcommand\Set[2]{\ensuremath{\{\text{#1 $|$ #2}\}}}
\newcommand\Figure[4]{
\begin{figure}[t]
\centering
\centerline{\psfig{file=#2,scale=#4}}
\caption{#3}
\label{#1}
\end{figure}}
\DeclareMathOperator{\CohCD}{CohCD}
\DeclareMathOperator{\St}{St}
\DeclareMathOperator{\Ind}{Ind}
\DeclareMathOperator{\ns}{ns}
\DeclareMathOperator{\sep}{sep}
\DeclareMathOperator{\id}{id}
\newcommand\StNS{\ensuremath{\St^{\ns}}}
\newcommand\StSep{\ensuremath{\St^{\sep}}}
\newcommand\Curves{\ensuremath{\mathcal{C}}}
\newcommand\Arc{\ensuremath{\mathcal{AC}}}
\newcommand\Tits{\ensuremath{\mathcal{T}}}
\newcommand\Apartment{\ensuremath{\mathfrak{A}}}
\newcommand\fX{\ensuremath{\mathfrak{X}}}
\newcommand\fS{\ensuremath{\mathfrak{S}}}
\newcommand\hS{\ensuremath{\widehat{S}}}
\newcommand\tphi{\ensuremath{\widetilde{\phi}}}
\newcommand\tzeta{\ensuremath{\widetilde{\zeta}}}
\newcommand\oB{\ensuremath{\overline{B}}}
\newcommand\oT{\ensuremath{\overline{T}}}
\newcommand\ox{\ensuremath{\overline{x}}}
\newcommand\NS[1]{\ensuremath{\ldbrack #1 \rdbrack}}
\newcommand\inc{\ensuremath{\iota}}
\newcommand\proj{\ensuremath{\pi}}
\newcommand\chainproj{\ensuremath{\rho}}
\DeclareMathOperator{\Sym}{Sym}
\newcommand\cSym{\ensuremath{\mathcal{SYM}}}
\newcommand\hSym{\ensuremath{\widehat{\Sym}}}
\DeclareMathOperator{\indec}{indec}
\DeclareMathOperator{\dec}{dec}
\newcommand\cP{\ensuremath{\mathcal{P}}}
\newcommand\cO{\ensuremath{\mathcal{O}}}
\newcommand\bV{\ensuremath{\mathbf{V}}}
\newcommand\bVdec{\ensuremath{\mathbf{V}^{\dec}}}
\newcommand\bVindec{\ensuremath{\mathbf{V}^{\indec}}}
\newcommand\piindec{\ensuremath{\pi^{\indec}}}
\newcommand\cF{\ensuremath{\mathcal{F}}}
\begin{document}

\maketitle

\begin{abstract}
We prove that the moduli space of curves with level structures has an enormous amount of rational cohomology
in its cohomological dimension.  As an application, we prove that the coherent cohomological dimension of the
moduli space of curves is at least $g-2$.  Well known conjectures of Looijenga would imply that this is sharp.
\end{abstract}

\section{Introduction}

Let $\Sigma_g$ be a closed oriented genus $g$ surface.  The {\em mapping class group} of $\Sigma_g$, denoted
$\Mod_g$, is the group of isotopy classes of orientation-preserving diffeomorphisms of $\Sigma_g$.  The group
$\Mod_g$ lies at the crossroads of many areas of mathematics.  One fundamental reason for this is that $\Mod_g$
is the orbifold fundamental group of the moduli space $\Moduli_g$ of genus $g$ Riemann surfaces.  In fact,
even more is true: as an orbifold, $\Moduli_g$ is an Eilenberg--MacLane space for $\Mod_g$, which implies in
particular that
\[\HH^{\ast}(\Mod_g;\Q) \cong \HH^{\ast}(\Moduli_g;\Q).\]
See \cite{FarbMargalitPrimer} for a survey of $\Mod_g$ and $\Moduli_g$.

\paragraph{Stable cohomology.}
Let $\kappa_i \in \HH^{2i}(\Mod_g;\Q)$ be the $i^{\text{th}}$ Miller--Morita--Mumford class.  We then
have a graded ring homomorphism $\Q[\kappa_1,\kappa_2,\ldots] \rightarrow \HH^{\ast}(\Mod_g;\Q)$, and
the Mumford conjecture (proved by Madsen--Weiss \cite{MadsenWeiss}) says that this graded ring homomorphism is
an isomorphism in degrees less than or equal to $\frac{2}{3}(g-1)$.  Aside from some low-genus computations,
no nontrivial elements of $\HH^{\ast}(\Mod_g;\Q)$ have been found outside this stable range.  However,
Harer--Zagier \cite{HarerZagierEuler} proved that the Euler characteristic of $\Mod_g$ is enormous, so there
must exist vast amounts of unstable rational cohomology.

\paragraph{Level structures.}
The rational cohomology of finite-index subgroups of $\Mod_g$ (or, equivalently, finite covers of $\Moduli_g$) is also
of interest.  For $\ell \geq 2$, the {\em level $\ell$ congruence subgroup} of
$\Mod_g$, denoted $\Mod_g(\ell)$, is the kernel of the action of $\Mod_g$ on $\HH_1(\Sigma_g;\Z/\ell)$.  It
fits into a short exact sequence
\[1 \longrightarrow \Mod_g(\ell) \longrightarrow \Mod_g \longrightarrow \Sp_{2g}(\Z/\ell) \longrightarrow 1.\]
The symplectic group appears here because the action of $\Mod_g$ 
on $\HH_1(\Sigma_g;\Z/\ell)$ preserves the algebraic intersection pairing.  The associated finite cover of
$\Moduli_g$ is the moduli space $\Moduli_g(\ell)$ of genus $g$ curves equipped with a full level $\ell$ structure (i.e.\ 
a basis for the $\ell$-torsion in their Jacobian).  
A conjecture of the second
author (see \cite[\S 1]{PutmanH2Level} for a discussion) asserts that
\[\HH_k(\Mod_g(\ell);\Q) \cong \HH_k(\Mod_g;\Q) \quad \quad (g \gg k).\]
This holds for $k=1$ by work of Hain \cite{HainH1Level} and for $k=2$ by work 
of the second author \cite{PutmanH2Level}.

\paragraph{Cohomological dimension.}
The main topic of this paper is what happens outside the stable range.
Harer \cite{HarerDuality} proved that the virtual cohomological dimension (vcd) of $\Mod_g$ is $4g-5$, so 
$\HH^i(\Mod_g;\Q) = 0$ for $i>4g-5$.  The first place where one might hope to find some unstable rational cohomology
is thus in degree $4g-5$.  However, a theorem proved independently by 
Morita--Sakasai--Suzuki \cite{MoritaSakasaiSuzuki} and by Church--Farb--Putman \cite{ChurchFarbPutmanVanish} says
that $\HH^{4g-5}(\Mod_g;\Q) = 0$.  This might seem to contradict the fact that the vcd of $\Mod_g$ is $4g-5$.  However,
the definition of the vcd of a group makes use not only of ordinary cohomology, but also cohomology with respect
to arbitrary twisted coefficient systems.  Harer's theorem thus only asserts that there exists some $\Q[\Mod_g]$-module $M$ (necessarily nontrivial, in light of
\cite{MoritaSakasaiSuzuki,ChurchFarbPutmanVanish}) such that $\HH^{4g-5}(\Mod_g;M) \neq 0$.  

\paragraph{Main theorem.}
This brings us to our main theorem, which says that in contrast to what conjecturally happens in the stable range,
the group $\Mod_g(\ell)$ has an enormous amount of rational cohomology in its vcd.

\begin{maintheorem}
\label{maintheorem:cohomology}
Fix $g, \ell \geq 2$.  Let $p$ be a prime dividing $\ell$.  Then
\[\dim_{\Q} \HH^{4g-5}(\Mod_g(\ell);\Q) \geq \frac{|\Sp_{2g}(\Field_p)|}{g(p^{2g}-1)} = \frac{1}{g} p^{2g-1} \prod_{k=1}^{g-1} (p^{2k}-1)p^{2k-1}.\] 
\end{maintheorem}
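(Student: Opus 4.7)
The plan is to apply Bieri--Eckmann duality. Harer proved that $\Mod_g$ is a virtual duality group of dimension $4g-5$, and since $\Mod_g(\ell)$ is torsion-free of finite index in $\Mod_g$, it is an honest duality group of this dimension. Its dualizing module is the Steinberg module $\St_g$, realized as the top reduced homology of a certain $\Mod_g$-invariant complex of simple closed curves (or arcs) on $\Sigma_g$. Duality then identifies
\[\HH^{4g-5}(\Mod_g(\ell);\Q) \cong (\St_g \otimes \Q)_{\Mod_g(\ell)},\]
the coinvariants of the rationalized Steinberg module. Since $p \mid \ell$ implies $\Mod_g(\ell) \subseteq \Mod_g(p)$, the transfer map for this finite-index inclusion shows that the rational cohomology in degree $4g-5$ can only grow in passing from $\Mod_g(p)$ to $\Mod_g(\ell)$; it therefore suffices to establish the stated bound with $\ell$ replaced by $p$.

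The next step is to construct many explicit cycles in $\St_g$. The Steinberg module comes with a distinguished family of generating cycles --- the \emph{apartment classes} --- associated to combinatorial configurations on $\Sigma_g$ whose homological shadow is an ordered symplectic basis of $\HH_1(\Sigma_g;\Z)$. I would then analyze how $\Mod_g$ acts on these apartments: the action factors through the symplectic representation $\Mod_g \twoheadrightarrow \Sp_{2g}(\Z)$, and the $\Mod_g(p)$-orbit of an apartment class depends only on its reduction mod $p$, i.e.\ on the associated ordered symplectic basis of $\Field_p^{2g}$ --- taken up to a finite intrinsic symmetry group of the apartment whose order works out to $g(p^{2g}-1)$. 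Counting then yields exactly $|\Sp_{2g}(\Field_p)|/(g(p^{2g}-1))$ distinct orbits of apartment classes under $\Mod_g(p)$, producing that many a priori classes in $(\St_g \otimes \Q)_{\Mod_g(p)}$ --- matching the claimed bound.

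The main difficulty is to show that these classes are in fact \emph{linearly independent} in the coinvariants; counting orbits only bounds the number of generators. To establish independence, I would seek a $\Mod_g$-equivariant map out of $\St_g \otimes \Q$ onto a combinatorially accessible module on which the apartment classes can be shown directly to remain independent, plausibly built from an induced representation of $\Sp_{2g}(\Field_p)$ or from the Steinberg representation of that finite symplectic group. Understanding the relations among apartment classes in $\St_g$ precisely enough to rule out any additional collapsing beyond that already accounted for by the intrinsic apartment symmetries is the technical heart of the argument.
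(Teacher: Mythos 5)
Your first step is exactly the paper's: Harer's virtual Bieri--Eckmann duality for $\Mod_g$ identifies $\HH^{4g-5}(\Mod_g(\ell);\Q)$ with the coinvariants $(\St(\Sigma_g))_{\Mod_g(\ell)}$, and the transfer reduction to the case $\ell = p$ is valid (the paper achieves the same effect by having its invariant map factor through the action on $\HH_1(\Sigma_g;\Field_p)$). After that, though, the proposal has two genuine gaps, and the second points in a direction the paper explicitly found to be a dead end.

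The first gap is the orbit count. You assert that the $\Mod_g(p)$-orbit of an apartment class in $\St(\Sigma_g)$ depends only on the associated ordered symplectic basis of $\Field_p^{2g}$ ``up to a finite intrinsic symmetry group of the apartment whose order works out to $g(p^{2g}-1)$,'' yielding $|\Sp_{2g}(\Field_p)|/(g(p^{2g}-1))$ orbits. There is no such symmetry group: $g(p^{2g}-1)$ does not arise as the order of any natural stabilizer (the obvious apartment symmetries --- permuting the $2g$ vectors and scaling each --- have order $(2g)!\,(p-1)^{2g}$, which bears no clean relation to $g(p^{2g}-1)$). In the paper this number emerges only at the very end, after (i) constructing a surjection $\psi\colon \St(\Sigma_g) \twoheadrightarrow \StNS_{2g}(\Field_p)$ onto a specific \emph{quotient} of the $\SL_{2g}(\Field_p)$-Steinberg module $\St_{2g}(\Field_p)$ (one must mod out by ``separated apartments'' precisely so that $\psi$ annihilates boundaries of the arc complex), (ii) proving a branching rule $\St_{2g}(\Field_p) \cong \bigoplus_{S} \StNS(S)$ decomposing the $\SL_{2g}$-Steinberg module over $\Sp_{2g}(\Field_p)$ via symplectic splittings, and (iii) solving the resulting partition-indexed recurrence for $\dim \StNS_{2g}(\Field_p)$ using exponential generating functions and an identity of Euler. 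None of this is an orbit count, and the formula $g(p^{2g}-1)$ in the denominator comes out of the generating-function manipulation, not from apartment combinatorics.

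The second gap is in the linear-independence step, which you correctly flag as the heart of the matter. You propose to map $\St(\Sigma_g)$ onto ``an induced representation of $\Sp_{2g}(\Field_p)$ or \ldots\ the Steinberg representation of that finite symplectic group.'' The latter fails: the paper reports that every map from $\St(\Sigma_g)$ to the $\Sp_{2g}(\Field_p)$-Steinberg module that they could construct was identically zero, and moreover that Steinberg module has dimension only $p^{g^2}$, which is far below the claimed lower bound, so it could never work even in principle. The correct target is $\StNS_{2g}(\Field_p)$, a quotient of the $\SL_{2g}(\Field_p)$-Steinberg module rather than anything built from $\Sp$; identifying the right quotient (forced by the requirement that the chain map on the arc complex vanish on boundaries of $2g$-simplices whose extra arc is a separating curve) is the key construction you are missing.
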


\begin{remark}
Our lower bound is super-exponential in $g$; its leading term is $\frac{1}{g} p^{\binom{2g}{2}}$.  To give an idea of how
quickly it grows, the following are some special cases:

\centerline{\begin{tabular}{@{}r@{\hspace{0.05in}}c@{\hspace{0.05in}}l@{\hspace{0.5in}}r@{\hspace{0.05in}}c@{\hspace{0.05in}}l@{}}
$\dim_{\Q} \HH^{3}(\Mod_2(2);\Q)$ & $\geq$ & $24$ & $\dim_{\Q} \HH^{3}(\Mod_2(3);\Q)$ & $\geq$ & $216$ \\
$\dim_{\Q} \HH^{7}(\Mod_3(2);\Q)$ & $\geq$ & $11520$ & $\dim_{\Q} \HH^{7}(\Mod_3(3);\Q)$ & $\geq$ & $4199040$ \\
$\dim_{\Q} \HH^{11}(\Mod_4(2);\Q)$ & $\geq$ & $92897280$ & $\dim_{\Q} \HH^{11}(\Mod_4(3);\Q)$ & $\geq$ & $6685442749440$.
\end{tabular}}
\end{remark}

\begin{remark}
Our lower bound is almost certainly not sharp.
One difficulty with improving it is that as described below,
we exploit a connection to the Tits building for the group $\SL_n(\Field_{p})$.  Presumably better results
could be obtained by studying the obvious analogue of this building for the finite group $\SL_n(\Z/\ell)$, but
for $\ell$ not prime this is poorly understood.  We also expect that it is not sharp for $\ell$ a prime, though
we do not have any concrete proposals for improving it in that case.
\end{remark}

\begin{remark}
In his 1986 paper \cite{HarerDuality} (see p.\ 175), Harer asserts that ``it is possible to show'' that
$\HH^{4g-5}(\Mod_g(\ell);\Q) \neq 0$.  However, he never published a proof of this.
\end{remark}

\paragraph{Application to algebraic geometry.}
Theorem \ref{maintheorem:cohomology} has an interesting application to the geometry of the (coarse) moduli space $\Moduli_g$
of genus $g$ Riemann surfaces, which is a quasiprojective complex variety.  
We begin with the following conjecture of Looijenga \cite{FaberLooijenga}.

\begin{conjecture}[Looijenga]
\label{conjecture:looijenga}
For $g \geq 2$, the variety $\Moduli_g$ can be covered by $(g-1)$ open affine subsets.
\end{conjecture}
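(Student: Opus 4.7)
The conjecture is a well-known open problem, so what follows is necessarily speculative and is aimed only at indicating a natural line of attack. The plan is to produce $g-1$ effective divisors $D_1, \dots, D_{g-1}$ on $\Moduli_g$ such that $D_1 \cap \cdots \cap D_{g-1} = \emptyset$ and such that each $\Moduli_g \setminus D_i$ is affine. Since $\Moduli_g$ is quasi-projective, the second condition is guaranteed once $D_i$ is the support of an ample effective Cartier divisor; the first is then equivalent to the $\Moduli_g \setminus D_i$ jointly covering $\Moduli_g$.

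Natural candidates for the $D_i$ come from Brill--Noether theory. For each pair $(r,d)$ with Brill--Noether number $\rho(g,r,d) = g - (r+1)(g-d+r)$ equal to $-1$, the locus of curves admitting a $g^r_d$ is an irreducible divisor in $\Moduli_g$ whose class in $\mathrm{Pic}(\overline{\Moduli}_g)_\Q$ has been computed explicitly by Harris--Mumford, Eisenbud--Harris, and Farkas. Varying $(r,d)$ yields a large supply; further candidates include loci of curves with a vanishing theta-null, Weierstrass-type divisors, and syzygy (Koszul) divisors, each with known divisor-class formula as a combination of $\lambda$ and boundary classes.

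First I would choose $g-1$ such divisors and argue that, after restricting to the interior, each is ample. For $g \geq 3$ this ought to reduce, via $\mathrm{Pic}(\Moduli_g)_\Q \cong \Q \cdot \lambda$, to a positivity check on the $\lambda$-coefficient of the explicit class formula. Next I would attempt to verify emptiness of the intersection by stratifying $\Moduli_g$ (e.g.\ by gonality, by Brill--Noether data, or by Weierstrass gap sequence) and ruling out each stratum in turn --- showing that on every stratum at least one of the chosen Brill--Noether conditions is forced to hold, so that the corresponding $D_i$ is avoided by every curve in that stratum.

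The main obstacle is this last emptiness step. Highly special curves --- those with many Weierstrass points, many special linear systems, or low gonality --- lie simultaneously on the intersection of many natural divisors, and a naive choice of $g-1$ Brill--Noether divisors will in general fail to miss every such curve. Cleverly matching the collection of divisors to exhaust all strata of $\Moduli_g$ is where essentially all of the difficulty sits; it is precisely this point that has kept the conjecture open beyond very low genus, even though Diaz's necessary consequence --- that $\Moduli_g$ contains no complete subvariety of dimension $\geq g-1$ --- is by comparison much easier to establish.
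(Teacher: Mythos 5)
This statement is a conjecture of Looijenga, not a theorem of the paper; the paper offers no proof of it and uses it only as motivation (together with the theorem of Fontanari--Pascolutti establishing it for $2 \le g \le 5$) for the sharpness assertion in Theorem B. You correctly recognize that the conjecture is open, so there is no ``paper's proof'' to compare against, and your writeup is appropriately framed as a speculative sketch rather than a proof.

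That said, one step in your sketch is not quite right as stated. The assertion that ``$\Moduli_g$ quasi-projective'' plus ``$D_i$ the support of an ample effective Cartier divisor'' forces $\Moduli_g \setminus D_i$ to be affine is false in general: the Serre-type criterion that the complement of an ample effective divisor is affine requires the ambient variety to be \emph{projective}, not merely quasi-projective. Since $\Moduli_g$ is very far from projective, the correct setup (and the one Fontanari--Pascolutti actually use in low genus) is to work on the Deligne--Mumford compactification $\overline{\Moduli}_g$ and to arrange each $D_i$ to be an ample effective divisor on $\overline{\Moduli}_g$ whose support \emph{contains the entire boundary} $\partial\overline{\Moduli}_g$; then $\overline{\Moduli}_g \setminus D_i$ is an affine open subset of $\Moduli_g$. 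This changes the positivity requirement: one needs the class $a\lambda - \sum b_i \delta_i$ of the chosen divisor to be ample on $\overline{\Moduli}_g$ with all $b_i > 0$, which is a real constraint given what is known about the nef and effective cones of $\overline{\Moduli}_g$, and it is an additional reason the emptiness step you already flag as the main obstacle is hard to carry out beyond small $g$.
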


For example, this conjecture asserts that $\Moduli_2$ is itself affine, which is a consequence of the fact
that every genus $2$ Riemann surface is hyperelliptic.  More generally, Fontanari--Pascolutti \cite{FontanariPascolutti}
proved that Conjecture \ref{conjecture:looijenga} holds for $2 \leq g \leq 5$.

Conjecture \ref{conjecture:looijenga} would imply a bound on the coherent cohomological dimension of $\Moduli_g$, which
is defined as follows.
If $X$ is a variety, then the {\em coherent cohomological dimension} of $X$, denoted $\CohCD(X)$, is the maximum
value of $k$ such that there exists some coherent sheaf $\cF$ on $X$ with $\HH^k(X;\cF) \neq 0$.
The coherent cohomological dimension of a variety reflects interesting geometric properties
of the variety.  For example, Serre \cite{SerreAffine}
proved that $\CohCD(X) = 0$ if and only
if $X$ is an affine variety.  See \cite{HartshorneDimension} for more information on
coherent cohomological dimension.

Since $\Moduli_g$ is separated, the intersection of two affine open subsets of $\Moduli_g$ is itself an affine
open subset.  Thus if Conjecture \ref{conjecture:looijenga} were true and $\cF$ were a coherent sheaf
on $\Moduli_g$, then we could apply the Mayer--Vietoris spectral sequence to the cover of $\Moduli_g$ given by
Conjecture \ref{conjecture:looijenga} to deduce that $\HH^k(\Moduli_g;\cF) = 0$ for $k>g-2$.  In other
words, Conjecture \ref{conjecture:looijenga} would imply that $\CohCD(\Moduli) \leq g-2$.

Using our main theorem (Theorem \ref{maintheorem:cohomology}), we will prove the following, which asserts that this conjectural upper bound is sharp.

\begin{maintheorem}
\label{maintheorem:cohcd}
For $g \geq 2$, we have $\CohCD(\Moduli_g) \geq g-2$ with equality for $2 \leq g \leq 5$.
\end{maintheorem}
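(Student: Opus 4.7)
The upper bound $\CohCD(\Moduli_g) \leq g-2$ for $2 \leq g \leq 5$ is essentially spelled out in the paragraph preceding the statement: Fontanari--Pascolutti's verification of Conjecture~\ref{conjecture:looijenga} for $g \leq 5$ furnishes a cover of $\Moduli_g$ by $g-1$ affine opens whose pairwise intersections are affine (by separatedness), after which the Mayer--Vietoris (\v{C}ech) complex for any coherent sheaf has length $g-2$. I therefore focus on the lower bound, which is the substantive half of the theorem.

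My plan is to first produce nontrivial coherent cohomology on the finite cover $\Moduli_g(\ell)$ by feeding Theorem~\ref{maintheorem:cohomology} into Hodge theory, and then push down to $\Moduli_g$. Fix any $\ell \geq 3$ so that $\Mod_g(\ell)$ is torsion-free (a classical consequence of Serre's lemma applied to the symplectic representation). Then $\Moduli_g(\ell)$ is a smooth quasi-projective variety of complex dimension $3g-3$ with $\HH^{\ast}(\Moduli_g(\ell);\Q) \cong \HH^{\ast}(\Mod_g(\ell);\Q)$, so Theorem~\ref{maintheorem:cohomology} supplies $\HH^{4g-5}(\Moduli_g(\ell);\C) \neq 0$. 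Invoking Grothendieck's algebraic de Rham comparison together with the Hodge-filtration (stupid filtration) spectral sequence
\[E_1^{p,q} = \HH^q(\Moduli_g(\ell);\Omega^p) \Longrightarrow \HH^{p+q}(\Moduli_g(\ell);\C),\]
nonvanishing in total degree $4g-5$ forces some $E_{\infty}^{p,q}$ with $p+q = 4g-5$ to be nonzero, hence some $E_1^{p,q} \neq 0$. Since $\Omega^p_{\Moduli_g(\ell)} = 0$ for $p > 3g-3$ and coherent cohomology vanishes above dimension $3g-3$, the constraints $p,q \leq 3g-3$ together with $p+q = 4g-5$ force $q \geq g-2$. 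Thus $\CohCD(\Moduli_g(\ell)) \geq g-2$.

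Finally, transfer the bound via the finite quotient map $\pi\colon \Moduli_g(\ell) \to \Moduli_g$ by $\Sp_{2g}(\Z/\ell)$. Since $\pi$ is finite and hence affine, $\pi_{\ast}$ is exact on quasi-coherent sheaves and preserves coherence, so $\HH^{\ast}(\Moduli_g(\ell);\cF) \cong \HH^{\ast}(\Moduli_g;\pi_{\ast}\cF)$ for any coherent $\cF$, yielding $\CohCD(\Moduli_g) \geq \CohCD(\Moduli_g(\ell)) \geq g-2$. The main conceptual point to flag, rather than an obstacle per se, is that $E_1$-degeneration of the Hodge filtration---which fails for noncompact varieties in general---is \emph{not} needed here; mere convergence of the spectral sequence suffices to convert topological nonvanishing into coherent nonvanishing. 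The guiding observation is numerical: because the virtual cohomological dimension $4g-5$ exceeds the complex dimension $3g-3$ by exactly $g-2$, the Hodge filtration on $\HH^{4g-5}$ is automatically pinned into a range where coherent cohomology of degree at least $g-2$ must appear.
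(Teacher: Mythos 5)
Your argument is correct and is essentially the same as the paper's: both feed Theorem~\ref{maintheorem:cohomology} into the Hodge--de~Rham spectral sequence on $\Moduli_g(\ell)$, use the dimension bound on $\Omega^{\bullet}$ together with Grothendieck vanishing to force a nonzero $E_1$-term with coherent-cohomology degree at least $g-2$, transfer to $\Moduli_g$ along the finite map, and invoke Fontanari--Pascolutti for the upper bound when $2 \leq g \leq 5$. The only cosmetic differences are that the paper phrases the spectral-sequence step as a contradiction and cites Hartshorne's Proposition~1.1 for $\CohCD(\Moduli_g(\ell)) = \CohCD(\Moduli_g)$, whereas you argue the one needed inequality directly via exactness of $\pi_{\ast}$ for the finite affine quotient.
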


\begin{remark}
This implies that $\Moduli_g$ cannot be covered with fewer than $(g-1)$ open affine subsets.  This
was already known.  Indeed, it follows from work of Chaudhuri \cite{Chaudhuri}, who gave a lower
bound on the cohomological excess of $\Moduli_g$ (which is defined using constructible sheaves).
\end{remark}

\begin{remark}
The only paper we are aware of concerning upper bounds for things related to $\CohCD(\Moduli_g)$ is recent work of
Mondello \cite{Mondello}, who proved that the Dolbeault cohomological dimension
of $\Moduli_g$ is at most $2g-2$.  The Dolbeault cohomological dimension of a complex
analytic variety $X$ is the maximal $k$ such that there exists a holomorphic vector
bundle $\mathcal{B}$ on $X$ such that $\HH^k(\Moduli_g;\mathcal{B}) \neq 0$.  As will
be clear from our argument below, our work also establishes a lower bound of $g-2$ on
the Dolbeault cohomological dimension of $\Moduli_g$.
\end{remark}

\paragraph{Proof of Theorem \ref{maintheorem:cohcd}.}
The derivation of Theorem \ref{maintheorem:cohcd} from Theorem \ref{maintheorem:cohomology}
is so simple that we give it here.  We wish to thank Eduard Looijenga for explaining
this argument to us.  Fix some $\ell \geq 3$, so $\Moduli_g(\ell)$ is
smooth.  The projection $\Moduli_g(\ell) \rightarrow \Moduli_g$ is a finite surjective
map, so $\CohCD(\Moduli_g(\ell)) = \CohCD(\Moduli_g)$ (see
\cite[Proposition 1.1]{HartshorneDimension}).  It is thus enough to prove that
$\CohCD(\Moduli_g(\ell)) \geq g-2$.  Assume for the sake of contradiction that this
is false.  The Hodge--de Rham spectral sequence for $\Moduli_g(\ell)$ converges to
$\HH^{\ast}(\Moduli_g(\ell);\C)$ and has
\[E_1^{pq} = \HH^p(\Moduli_g(\ell);\Omega^q).\]
Since the complex dimension of $\Moduli_g(\ell)$ is $3g-3$, we have $\Omega^q = 0$ for
$q \geq 3g-2$, and thus
\begin{equation}
\label{eqn:e11}
E_1^{pq} = 0 \quad \quad (q \geq 3g-2).
\end{equation}
Moreover, since $\Omega^q$ is a coherent sheaf on $\Moduli_g(\ell)$ our
assumption that $\CohCD(\Moduli_g(\ell)) < g-2$ implies that
\begin{equation}
\label{eqn:e12}
E_1^{pq} = \HH^p(\Moduli_g(\ell);\Omega^q) = 0 \quad \quad (p \geq g-2).
\end{equation}
From \eqref{eqn:e11} and \eqref{eqn:e12},
we deduce that $E_1^{pq} = 0$ whenever $p+q = 4g-5$.  This implies that
$\HH^{4g-5}(\Moduli_g(\ell);\C) = 0$, contradicting Theorem \ref{maintheorem:cohomology}.
The fact that we have equality for $2 \leq g \leq 5$ follows from
the aforementioned theorem of Fontanari--Pascolutti \cite{FontanariPascolutti}
asserting that Conjecture \ref{conjecture:looijenga} holds for $2 \leq g \leq 5$.

\paragraph{Proof outline for Theorem \ref{maintheorem:cohomology}.}
Our proof of Theorem \ref{maintheorem:cohomology} has four steps.
\begin{compactenum}
\item First, we use the fact that the mapping class
group satisfies Bieri--Eckmann duality \cite{HarerDuality} to translate the theorem into an
assertion about the action of $\Mod_g(\ell)$ on the Steinberg module for the mapping class
group, i.e.\ the unique nonzero homology group of the curve complex.  
\item Next, we study this action
by constructing a novel surjective homomorphism from the Steinberg module for the mapping class group
to a vector space $\StNS_{2g}(\Field_p)$ 
that is a quotient of the Steinberg module $\St_{2g}(\Field_p)$ for the finite group $\SL_{2g}(\Field_{p})$, i.e.\ the
unique nonzero homology group of this finite group's Tits building.  
\end{compactenum}
It follows from the previous two steps that the dimension of $\HH^{4g-5}(\Mod_g(\ell);\Q)$ is at least
$\dim_{\Q} \StNS_{2g}(\Field_p)$.  
At this point, one might think that we have at least proved that $\HH^{4g-5}(\Mod_g(\ell);\Q) \neq 0$.  However,
there is a problem -- from its definition, it is not clear that $\StNS_{2g}(\Field_p) \neq 0$.  The next two
steps analyze this vector space.
\begin{compactenum}
\item[3.] The third step is representation-theoretic.  As a prelude to analyzing $\StNS_{2g}(\Field_p)$,
we show how to decompose the restriction of the
$\SL_{2g}(\Field_{p})$-representation $\St_{2g}(\Field_p)$ to the subgroup $\Sp_{2g}(\Field_{p})$, i.e.\ we construct a
branching rule between these two different classical groups (see Remark \ref{remark:branching} below).
\item[4.] Finally, to use the third step to show that $\StNS_{2g}(\Field_p)$ is 
$\frac{|\Sp_{2g}(\Field_p)|}{g(p^{2g}-1)}$-dimensional, we apply
classical theorems concerning the partition function and exponential generating functions (some of which
go back to Euler).  
\end{compactenum}

\begin{remark}
One might expect that the Steinberg module for the finite symplectic group $\Sp_{2g}(\Field_{p})$ would appear
here rather than the Steinberg module for $\SL_{2g}(\Field_{p})$.  We tried to do
this initially, but were unsuccessful.  Indeed, every map from the Steinberg module for the
mapping class group to the Steinberg module for $\Sp_{2g}(\Field_p)$ that we were able to concoct ended up
being the zero map.
What is more, our lower bound on the dimension of $\HH^{4g-5}(\Mod_g(\ell);\Q)$ is significantly larger than the dimension
of the Steinberg module for $\Sp_{2g}(\Field_p)$, namely $p^{g^2}$, so it seems unlikely that one could 
use the Steinberg module for $\Sp_{2g}(\Field_p)$ to prove a theorem as strong as Theorem \ref{maintheorem:cohomology}.
\end{remark}

\paragraph{Outline.}
Our proof of Theorem \ref{maintheorem:cohomology} is contained in \S \ref{section:duality} -- \ref{section:bounds}.
The first part of the proof is in \S \ref{section:duality}, which as described above uses Bieri--Eckmann
duality to connect Theorem \ref{maintheorem:cohomology} to the Steinberg module for the mapping class group.
In \S \ref{section:maptobuilding}, we describe how to connect the Steinberg module for the mapping class group
to the Steinberg module for the special linear group.  Next, in \S \ref{section:decompose} we construct
the branching rule discussed above.  Finally, in \S \ref{section:bounds} we establish the lower bound in
Theorem \ref{maintheorem:cohomology}.  

\paragraph{Acknowledgments.}
We wish to thank Eduard Looijenga for asking us whether Theorem \ref{maintheorem:cohomology} held and explaining
why it implied Theorem \ref{maintheorem:cohcd}.  We also want to thank Thomas Church, Benson Farb, Martin Kassabov, Dan Margalit,
John Wiltshire-Gordon, and the MathOverflow user ``Lucia'' for helpful comments. We finally want to thank the referee
for a careful and thoughtful report.

\section{Bieri--Eckmann duality for the mapping class group}
\label{section:duality}

In this section, we translate Theorem \ref{maintheorem:cohomology} into a statement (Theorem \ref{maintheoremprime:cohomology} below) concerning the Steinberg module for the mapping class group.

Fix some $g \geq 2$.  Ignoring orbifold issues, the space
$\Moduli_g$ is a smooth orientable manifold.  If it were compact, then it would 
satisfy Poincar\'{e} duality.  Unfortunately, it is not compact.  It therefore only satisfies Poincar\'{e}--Lefschetz
duality with a contribution coming from the ``boundary'' (i.e.\ at infinity).  This
is best described in terms of the group $\Mod_g$.  Harer \cite{HarerDuality} proved that
$\Mod_g$ satisfies a version of Poincar\'{e}--Lefschetz duality called virtual Bieri--Eckmann duality.  Recalling that
the vcd of $\Mod_g$ is $4g-5$, this duality takes the form
\begin{equation}
\label{eqn:bierieckmann1}
\HH^{4g-5-i}(\Mod_g;\Q) \cong \HH_i(\Mod_g;\St(\Sigma_g)) \quad \quad (i \geq 0).
\end{equation}
Here the dualizing module $\St(\Sigma_g)$ is the Steinberg module for $\Mod_g$, which is defined as follows.  The {\em curve complex}
for $\Sigma_g$, denoted $\Curves_g$, is the simplicial complex whose $k$-simplices are collections 
$\{\gamma_0,\ldots,\gamma_k\}$ of distinct isotopy classes of non-nullhomotopic simple closed curves on $\Sigma_g$
that can be realized
disjointly.  One way of thinking of $\Curves_g$ is that it is a combinatorial model for all the ways that a genus
$g$ Riemann surface can degenerate to a stable nodal surface.  The group $\Mod_g$ acts on $\Curves_g$, and
Harer \cite{HarerDuality} proved that $\Curves_g$ is homotopy equivalent to an infinite wedge of $(2g-2)$-dimensional
spheres.  The {\em Steinberg module} for $\Mod_g$ is defined to be
\[\St(\Sigma_g) \coloneq \RH_{2g-2}(\Curves_g;\Q).\]  
In the duality \eqref{eqn:bierieckmann1}, this measures the contribution ``at
infinity'' to cohomology.

Since $\Mod_g(\ell)$ is a finite-index subgroup of $\Mod_g$, it also satisfies Bieri--Eckmann duality with the same
dualizing module $\St(\Sigma_g)$.  We thus have
\[\HH^{4g-5-i}(\Mod_g(\ell);\Q) \cong \HH_i(\Mod_g(\ell);\St(\Sigma_g)) \quad \quad (i \geq 0).\]
Recall that if $G$ is a group and $M$ is a $G$-module, then $\HH_0(G;M) \cong M_G$.  Here $M_G$ denotes the 
{\em coinvariants} of $G$ acting on $M$, i.e.\ the quotient of $M$ by the submodule spanned by
$\Set{$m-g(m)$}{$m \in M$, $g \in G$}$.  To prove Theorem \ref{maintheorem:cohomology}, we must prove that
\[\HH_0(\Mod_g(\ell);\St(\Sigma_g)) \cong (\St(\Sigma_g))_{\Mod_g(\ell)}\]
is large, i.e.\ we must construct $\Mod_g(\ell)$-invariant homomorphisms from $\St(\Sigma_g)$ whose targets
are large.  More precisely, Theorem \ref{maintheorem:cohomology} is equivalent to the following theorem.

\begin{maintheoremprime}
\label{maintheoremprime:cohomology}
Fix $g, \ell \geq 2$.  Let $p$ be a prime dividing $\ell$.  Then there 
exists a $\Mod_g(\ell)$-invariant surjective homomorphism
$\psi\colon \St(\Sigma_g) \rightarrow V$ such that $V$ is a vector space over $\Q$ satisfying
\[\dim_{\Q} V = \frac{|\Sp_{2g}(\Field_p)|}{g(p^{2g}-1)}.\]
\end{maintheoremprime}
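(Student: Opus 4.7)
The plan is to construct the required surjection $\psi\colon \St(\Sigma_g) \to V$ by factoring it through a quotient of the Steinberg module for the finite group $\SL_{2g}(\Field_p)$. Take $V = \StNS_{2g}(\Field_p)$, where the superscript records that one has quotiented $\St_{2g}(\Field_p)$ by relations arising from ``non-separating'' combinatorial data. Because $\Mod_g(\ell)$ acts trivially on $\HH_1(\Sigma_g;\Field_p)$ (using that $p$ divides $\ell$), any map built from the mod-$p$ homology action of $\Mod_g$ on curves is automatically $\Mod_g(\ell)$-invariant, so invariance comes for free once the map is built.

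First, I would construct $\chainproj\colon \St(\Sigma_g)\to \StNS_{2g}(\Field_p)$ by sending a non-separating simple closed curve $\gamma\subset \Sigma_g$ to the isotropic line $\Span{[\gamma]}\subset \Field_p^{2g}$. Disjoint non-separating curves with independent homology classes then determine isotropic flags, i.e.\ simplices in the Tits building $\Tits$ of $\SL_{2g}(\Field_p)$. One would extend this to a chain map from a suitable subcomplex of $\Curves_g$, defining $\StNS_{2g}(\Field_p)$ to be the quotient of $\St_{2g}(\Field_p)$ by exactly the relations needed to make separating curves and pants-cobordisms in the surface consistent with the image. This produces a well-defined $\Mod_g(\ell)$-invariant map on top-dimensional reduced homology.

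Next, for surjectivity, I would check that every generating apartment class in $\StNS_{2g}(\Field_p)$ coming from a symplectic basis of $\Field_p^{2g}$ is realized by an explicit configuration of disjoint non-separating simple closed curves on $\Sigma_g$; this rests on the classical fact that any symplectic basis of $\HH_1(\Sigma_g;\Field_p)$ lifts to geometric curves. For the dimension calculation, the $\Mod_g$-action on $V$ factors through $\Sp_{2g}(\Field_p)$, so $V$ is an $\Sp_{2g}(\Field_p)$-subquotient of the restriction of $\St_{2g}(\Field_p)$ to $\Sp_{2g}(\Field_p)$. I would then produce a branching rule decomposing this restriction into $\Sp_{2g}(\Field_p)$-irreducibles, identify the specific summand corresponding to $\StNS_{2g}(\Field_p)$, and collapse the resulting sum to the closed form $|\Sp_{2g}(\Field_p)|/(g(p^{2g}-1))$ using classical Euler-type partition identities.

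The main obstacle is calibrating $\StNS_{2g}(\Field_p)$: it must simultaneously be small enough that the homology assignment descends through it (absorbing all separating-curve degenerations), and large enough to realize the claimed dimension. Closely intertwined with this is the branching rule for the restriction of $\St_{2g}(\Field_p)$ from $\SL_{2g}(\Field_p)$ to $\Sp_{2g}(\Field_p)$, which does not seem to appear in the classical literature on representations of finite classical groups and must be derived from scratch; converting its combinatorial output into the final partition-theoretic formula will be where the bulk of the technical work lies.
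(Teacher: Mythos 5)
Your high-level strategy matches the paper: the target is indeed $\StNS_{2g}(\Field_p) = \St_{2g}(\Field_p)/\StSep_{2g}(\Field_p)$, surjectivity is checked by realizing a symplectic-basis apartment, and the dimension is computed via an $\Sp_{2g}$-branching rule for $\St_{2g}(\Field_p)$ followed by Euler-type partition identities. However, your chain-level construction has a genuine gap that would make the map trivial rather than surjective.

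You propose to send disjoint non-separating simple closed curves to isotropic lines and thereby build ``isotropic flags.'' This is wrong on two counts, and the second one is fatal. First, a collection of disjoint curves is an unordered set and does not canonically determine a flag; the correct object to attach to a top-dimensional configuration is an \emph{apartment class} $\Apartment_B$ in $\St_{2g}(\Field_p)$, obtained from the unordered collection of homology vectors (the apartment class is alternating under permutations, which is what makes the assignment well-defined). Second and more seriously: disjoint curves have zero geometric and hence zero algebraic intersection, so their mod-$p$ homology classes are pairwise symplectically orthogonal. Any configuration you produce this way is isotropic, which means your chain map lands in the image of the symplectic Tits building inside $\Tits_{2g}(\Field_p)$. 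The paper explicitly warns that every such construction they tried yields the zero map, and for good reason: the symplectic basis apartment $\Apartment_C$ with $\omega(\vec{v}_i,\vec{v}_{i+1})=1$ that you need to realize for surjectivity is non-isotropic and cannot come from disjoint curves. The fix, which is essential rather than cosmetic, is to replace the curve complex by Harer's arc complex $\Arc_g^1$: one works with $\ast$-based simple closed curves disjoint away from a fixed basepoint $\ast$, so that $2g$ arcs in a top simplex can and do have nonzero pairwise algebraic intersection. Then Lemma~\ref{lemma:arcdescription} gives $\St(\Sigma_g)$ as a cokernel of a concrete boundary map, one defines $\tphi$ on $(2g-1)$-simplices of arcs by sending $\{\alpha_0,\ldots,\alpha_{2g-1}\}$ to $\Apartment_{([\alpha_0]_p,\ldots,[\alpha_{2g-1}]_p)}$, and the analysis of $\partial$ of $2g$-simplices (where exactly one arc becomes homologically trivial, forcing a separating curve and an orthogonal splitting) is what isolates the subspace $\StSep_{2g}(\Field_p)$ to quotient by. The configuration of Figure~\ref{figure:arcsimplex}, which hits a symplectic-basis apartment, consists of arcs that all pass through $\ast$, not disjoint curves.
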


\section{Mapping to the nonseparated building}
\label{section:maptobuilding}

In this section, we construct the homomorphism $\psi$ whose existence is asserted by Theorem
\ref{maintheoremprime:cohomology} and prove that $\psi$ is surjective.  
Later sections will calculate the dimension of its target.
Though we will not use any of his results explicitly, 
our point of view in this section is heavily influenced by Broaddus's paper \cite{BroaddusSteinberg}.  To avoid
degenerate situations, we will fix some $g \geq 2$.

\paragraph{Arcs.}
To construct $\psi$, we need to understand generators for 
$\St(\Sigma_g) = \RH_{2g-2}(\Curves_g;\Q)$.
For this, we will use an alternate model for $\Curves_g$ that
was constructed by Harer.  Fix a basepoint $\ast \in \Sigma_g$.  The {\em arc complex} on
$\Sigma_g$, denoted $\Arc_g^1$, is the simplicial complex whose $k$-simplices are collections
$\{\alpha_0,\ldots,\alpha_k\}$ of distinct homotopy classes of unoriented $\ast$-based simple closed curves on $\Sigma_g$
that can be realized to be disjoint away from $\ast$.  The superscript $1$ in $\Arc_g^1$ indicates that there is a
single basepoint.  Harer \cite[Theorem 1.5]{HarerStability} proved that $\Arc_g^1$ is contractible.  See
\cite{HatcherTriangulations} for a beautiful short proof of this.  Next, define
$\Arc_g^1(\infty)$ to be the subcomplex of $\Arc_g^1$ consisting of simplices $\{\alpha_0,\ldots,\alpha_k\}$ such that
some component of $\Sigma_g$ cut along $\alpha_0 \cup \cdots \cup \alpha_k$ is not simply-connected.
Harer \cite[Lemma 1.7]{HarerStability} proved that $\Arc_g^1(\infty)$ is homotopy equivalent to $\Curves_g$.

\begin{remark}
\label{remark:oncepunctured}
What Harer actually proves in the above reference is that $\Arc_g^1(\infty)$ is homotopy equivalent to the curve
complex on a once-punctured surface; however, another theorem of Harer from \cite{HarerDuality} is that the curve
complex on a once-punctured surface is homotopy equivalent to the curve complex on a closed surface.  See
\cite{KentLeiningerSchleimer} and \cite{HatcherVogtmannTethers} for alternate proofs of this.
\end{remark}

\begin{lemma}
\label{lemma:relativehomology}
For $g \geq 2$, we have $\St(\Sigma_g) \cong \HH_{2g-1}(\Arc_g^1,\Arc_g^1(\infty);\Q)$.
\end{lemma}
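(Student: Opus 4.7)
The plan is to exploit the long exact sequence of the pair $(\Arc_g^1, \Arc_g^1(\infty))$ in rational homology together with the two topological facts about these complexes recalled just above the statement.

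First I would write down the relevant segment of the long exact sequence of the pair in reduced rational homology:
\begin{equation*}
\RH_{2g-1}(\Arc_g^1;\Q) \to \HH_{2g-1}(\Arc_g^1,\Arc_g^1(\infty);\Q) \to \RH_{2g-2}(\Arc_g^1(\infty);\Q) \to \RH_{2g-2}(\Arc_g^1;\Q).
\end{equation*}
Next I would invoke Harer's theorem (cited in the excerpt) that $\Arc_g^1$ is contractible, which forces the outer two terms to vanish. The connecting map is therefore an isomorphism
\[\HH_{2g-1}(\Arc_g^1,\Arc_g^1(\infty);\Q) \xrightarrow{\ \cong\ } \RH_{2g-2}(\Arc_g^1(\infty);\Q).\]

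Finally I would apply Harer's homotopy equivalence $\Arc_g^1(\infty) \simeq \Curves_g$ (as recalled just above, using the supplemental remark that the curve complex on a once-punctured surface is homotopy equivalent to that on the closed surface) to identify the right-hand side with $\RH_{2g-2}(\Curves_g;\Q)$, which is by definition $\St(\Sigma_g)$. Chaining the two isomorphisms produces the desired identification.

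There is no real obstacle here: every ingredient is a direct consequence of a result stated in the preceding paragraph. The only thing worth being careful about is the bookkeeping between ordinary and reduced homology (both Harer facts concern the underlying topology, not a specific homological convention) and the degree shift inherent in the long exact sequence of a pair, which is what upgrades the $(2g-2)$-dimensional Steinberg module into a $(2g-1)$-dimensional relative homology group.
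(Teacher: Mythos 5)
Your proposal is correct and follows exactly the same route as the paper: use the long exact sequence of the pair, kill the outer terms by Harer's contractibility of $\Arc_g^1$, and then identify $\RH_{2g-2}(\Arc_g^1(\infty);\Q)$ with $\St(\Sigma_g)$ via Harer's homotopy equivalence $\Arc_g^1(\infty) \simeq \Curves_g$.
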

\begin{proof}
Since $\Arc_g^1$ is contractible, the long exact sequence in relative homology implies that
$\HH_i(\Arc_g^1(\infty);\Q) \cong \HH_{i+1}(\Arc_g^1,\Arc_g^1(\infty);\Q)$.  The lemma now follows from 
the fact that $\Arc_g^1(\infty)$ is homotopy equivalent to $\Curves_g$.
\end{proof}

\begin{remark}
The description of $\St(\Sigma_g)$ in Lemma \ref{lemma:relativehomology} slightly obscures
the $\Mod_g$-action, which goes as follows (cf.\ Remark \ref{remark:oncepunctured}).  Consider a mapping class which is
represented by an orientation-preserving diffeomorphism 
$f\colon \Sigma_g \rightarrow \Sigma_g$.  Isotoping $f$, we can assume that
$f(\ast) = \ast$, and thus $f$ induces an automorphism of $\Arc_g^1$ that
preserves $\Arc_g^1(\infty)$.  Under the isomorphism in 
Lemma \ref{lemma:relativehomology}, the induced action of $f$ on
$\HH_{2g-1}(\Arc_g^1,\Arc_g^1(\infty);\Q)$ is precisely the action of the mapping class
associated to $f$ on $\St(\Sigma_g)$.  In particular, this action on homology is
independent of all of our choices.  
\end{remark}

\begin{lemma}
\label{lemma:recognizespan}
Let $R$ be a ring.  For some $g \geq 2$, 
let $\sigma = \{\alpha_0,\ldots,\alpha_k\}$ be a simplex of $\Arc_g^1$.  
Orient each $\alpha_i$ in an arbitrary way.  Then $\sigma$ is a simplex of $\Arc_g^1(\infty)$
if and only if the homology classes of the $\alpha_i$ do not span $\HH_1(\Sigma_g;R)$.
\end{lemma}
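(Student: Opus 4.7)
The plan is to compute $\HH_1(\Sigma_g, A; R)$ in two different ways, where $A = \alpha_0 \cup \cdots \cup \alpha_k \subset \Sigma_g$; since the $\alpha_i$ are disjoint away from $\ast$, this $A$ is a wedge of $k+1$ circles.

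First, because $A$ is connected, the map $\HH_0(A; R) \to \HH_0(\Sigma_g; R)$ is an isomorphism, so the long exact sequence of the pair $(\Sigma_g, A)$ gives
\[
\HH_1(\Sigma_g, A; R) \;\cong\; \Coker\bigl(\HH_1(A; R) \to \HH_1(\Sigma_g; R)\bigr).
\]
Under the identification $\HH_1(A; R) \cong R^{k+1}$ via the loops $\alpha_0, \ldots, \alpha_k$, this map sends $\alpha_i$ to its homology class $[\alpha_i]$. Hence the $[\alpha_i]$ span $\HH_1(\Sigma_g; R)$ if and only if $\HH_1(\Sigma_g, A; R) = 0$.

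Second, I will relate $\HH_1(\Sigma_g, A; R)$ directly to the components of the cut surface. Let $N$ be a closed regular neighborhood of $A$ in $\Sigma_g$, which is a compact subsurface deformation retracting onto $A$, and let $S = \overline{\Sigma_g \setminus N}$. Then $S$ is a compact surface with boundary $\partial S = \partial N$ whose components $S_1, \ldots, S_m$ are precisely the components of $\Sigma_g$ cut along $A$. Standard excision (and the retraction $N \to A$) yields
\[
\HH_*(\Sigma_g, A; R) \;\cong\; \HH_*(\Sigma_g, N; R) \;\cong\; \HH_*(S, \partial S; R) \;\cong\; \bigoplus_{j=1}^m \HH_*(S_j, \partial S_j; R).
\]
Each $S_j$ is a compact orientable surface with nonempty boundary (it meets $\partial N \neq \emptyset$ since $\Sigma_g$ is connected and not all of it is contained in $N$), so it deformation retracts onto a wedge of $2g_j + b_j - 1$ circles, where $g_j$ is the genus and $b_j$ the number of boundary components of $S_j$. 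Poincar\'e--Lefschetz duality then gives
\[
\HH_1(S_j, \partial S_j; R) \;\cong\; \HH^1(S_j; R) \;\cong\; R^{2g_j + b_j - 1},
\]
which vanishes iff $(g_j, b_j) = (0,1)$, i.e.\ iff $S_j$ is a disk (equivalently, simply-connected).

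Chaining these equivalences proves the lemma: the classes $[\alpha_i]$ span $\HH_1(\Sigma_g; R)$ iff $\HH_1(\Sigma_g, A; R) = 0$ iff every $\HH_1(S_j, \partial S_j; R) = 0$ iff every component of the cut surface is a disk iff $\sigma \notin \Arc_g^1(\infty)$. The main subtlety is that $A$ is not a submanifold at the basepoint $\ast$, so one must justify the excision above; this is taken care of by working with the regular neighborhood $N$ in place of $A$, and the computation is otherwise routine.
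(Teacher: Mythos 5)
Your proof is correct and takes a route genuinely different from the paper's. The paper works with subsurfaces: it forms the regular neighborhood $X$ of $A = \alpha_0 \cup \cdots \cup \alpha_k$, enlarges it to a subsurface $Y$ by gluing on all simply-connected complementary components, uses Mayer--Vietoris to see that the $[\alpha_i]$ span $\HH_1(Y;R)$, and then reduces the question to whether $Y = \Sigma_g$, handling the two directions of the equivalence separately (and citing without proof that a proper compact subsurface $Y \subsetneq \Sigma_g$ has $\HH_1(Y;R) \to \HH_1(\Sigma_g;R)$ non-surjective). Your argument instead packages the entire question into the single relative group $\HH_1(\Sigma_g, A; R)$: the long exact sequence of the pair identifies it with the cokernel of $\HH_1(A;R) \to \HH_1(\Sigma_g;R)$, while excision plus Lefschetz duality compute it as $\bigoplus_j R^{2g_j + b_j - 1}$ over the components $S_j$ of the cut surface. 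This costs you a bit more machinery (Lefschetz duality), but it is self-contained and quantitative: both directions of the ``if and only if'' drop out of one calculation, and you avoid the unproved surjectivity statement the paper relies on. The only caveat, which applies equally to the paper, is that the statement is vacuously wrong for the zero ring, since then every collection of classes spans $\HH_1(\Sigma_g;R) = 0$; both proofs implicitly assume $R \neq 0$.
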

\begin{proof}
Let $X$ be a closed regular neighborhood of $\alpha_0 \cup \cdots \cup \alpha_k$ and let 
$Y$ be the result of gluing all simply-connected components of 
$\Sigma_g \setminus \Interior(X)$ to $X$.  By definition, $\sigma$ is a simplex of
$\Arc_g^1(\infty)$ if and only if $Y$ is a proper subsurface of $\Sigma_g$.  Since $X$
deformation retracts to $\alpha_0 \cup \cdots \cup \alpha_k$, the homology classes 
of the $\alpha_i$ span $\HH_1(X;R)$.  Mayer--Vietoris implies that the map 
$\HH_1(X;R) \rightarrow \HH_1(Y;R)$
is surjective, so the homology classes of the $\alpha_i$ also span $\HH_1(Y;R)$.  
If $Y = \Sigma_g$, then
we deduce that the homology classes of the $\alpha_i$ span $\HH_1(\Sigma_g;R)$.  
If $Y$ instead is a proper
subsurface of $\Sigma_g$, then the map $\HH_1(Y;R) \rightarrow \HH_1(\Sigma_g;R)$ 
is not surjective,
so the homology classes of the $\alpha_i$ do not span $\HH_1(\Sigma_g;R)$.  The lemma follows.
\end{proof}

\begin{lemma}
\label{lemma:skeleton}
For $g \geq 2$, the $(2g-2)$-skeletons of $\Arc_g^1$ and $\Arc_g^1(\infty)$ are equal.
\end{lemma}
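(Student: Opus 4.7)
The plan is to derive this immediately from Lemma \ref{lemma:recognizespan} together with the dimension count $\dim_{\Q} \HH_1(\Sigma_g;\Q) = 2g$. The inclusion $\Arc_g^1(\infty) \subseteq \Arc_g^1$ is tautological, so the claim about skeletons reduces to showing that every simplex of $\Arc_g^1$ of dimension at most $2g-2$ actually lies in $\Arc_g^1(\infty)$. In other words, I want to rule out simplices being ``missing'' from $\Arc_g^1(\infty)$ until the top two dimensions, and this is forced purely by how many arcs can possibly span $\HH_1(\Sigma_g;\Q)$.

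Concretely, fix such a simplex $\sigma = \{\alpha_0,\ldots,\alpha_k\}$ with $k \leq 2g-2$ and orient each $\alpha_i$ arbitrarily, so each arc determines a class in $\HH_1(\Sigma_g;\Q)$. Then $\sigma$ has $k+1 \leq 2g-1$ vertices, giving at most $2g-1$ homology classes in the $2g$-dimensional vector space $\HH_1(\Sigma_g;\Q) \cong \Q^{2g}$. Since a spanning set of a $2g$-dimensional $\Q$-vector space must have at least $2g$ elements, these classes cannot span $\HH_1(\Sigma_g;\Q)$. Applying Lemma \ref{lemma:recognizespan} with $R = \Q$ then yields $\sigma \in \Arc_g^1(\infty)$, as required.

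There is essentially no obstacle here: all the substantive content is packaged in Lemma \ref{lemma:recognizespan}, and what remains is just the elementary dimension count above. The only subtle point is that one must apply the recognition lemma over a field rather than, say, over $\Z$, so that ``at most $2g-1$ elements do not span'' is actually valid; $R = \Q$ (or indeed any field) accomplishes this without trouble.
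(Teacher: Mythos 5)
Your argument is correct and is essentially the paper's proof, stated in the contrapositive: the paper takes a $k$-simplex not in $\Arc_g^1(\infty)$, applies Lemma~\ref{lemma:recognizespan} (with $R = \Z$) to conclude the classes span $\HH_1(\Sigma_g;\Z)$, and deduces $k+1 \geq 2g$. One small remark on your final paragraph: working over $\Z$ is not actually an obstacle, since $\Z^{2g}$ also cannot be generated by fewer than $2g$ elements (tensor any putative surjection $\Z^m \twoheadrightarrow \Z^{2g}$ with $\Q$); the paper's choice of $R=\Z$ is therefore just as valid as your $R=\Q$.
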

\begin{proof}
Consider a $k$-simplex $\sigma = \{\alpha_0,\ldots,\alpha_k\}$ of $\Arc_g^1$ that does not lie in $\Arc_g^1(\infty)$.  Fix an orientation on each $\alpha_i$.  
Lemma \ref{lemma:recognizespan} implies that
the homology classes of the $\alpha_i$ span $\HH_1(\Sigma_g;\Z)$.  This implies that $k+1 \geq 2g$, as desired.
\end{proof}

\begin{lemma}
\label{lemma:arcdescription}
For $g \geq 2$, we have $\St(\Sigma_g) \cong \Coker(\CC_{2g}(\Arc_g^1;\Q) \stackrel{\partial}{\rightarrow} \CC_{2g-1}(\Arc_g^1,\Arc_g^1(\infty);\Q))$.
\end{lemma}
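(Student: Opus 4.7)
The plan is to compute $\HH_{2g-1}(\Arc_g^1,\Arc_g^1(\infty);\Q)$ directly from its definition as cycles modulo boundaries in the relative chain complex, and then identify the answer via Lemma \ref{lemma:relativehomology}. The key input is Lemma \ref{lemma:skeleton}: since the $(2g-2)$-skeletons of $\Arc_g^1$ and $\Arc_g^1(\infty)$ coincide, every simplex of $\Arc_g^1$ of dimension at most $2g-2$ already lies in $\Arc_g^1(\infty)$. Consequently, the relative chain groups $\CC_k(\Arc_g^1,\Arc_g^1(\infty);\Q)$ vanish for all $k\leq 2g-2$.

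From this vanishing, I would deduce that the boundary map out of $\CC_{2g-1}(\Arc_g^1,\Arc_g^1(\infty);\Q)$ is forced to be zero, so every relative $(2g-1)$-chain is automatically a relative cycle. The homology in degree $2g-1$ is therefore literally
\[
\HH_{2g-1}(\Arc_g^1,\Arc_g^1(\infty);\Q) = \Coker\bigl(\partial\colon \CC_{2g}(\Arc_g^1,\Arc_g^1(\infty);\Q) \to \CC_{2g-1}(\Arc_g^1,\Arc_g^1(\infty);\Q)\bigr).
\]
To convert this to the form asserted in the lemma, I would observe that the projection $\CC_{2g}(\Arc_g^1;\Q) \twoheadrightarrow \CC_{2g}(\Arc_g^1,\Arc_g^1(\infty);\Q)$ is surjective and commutes with $\partial$, so the image of the boundary out of the absolute chains equals the image of the boundary out of the relative chains. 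Combined with Lemma \ref{lemma:relativehomology}, this yields the stated isomorphism.

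There is no substantive obstacle here: the whole argument is a formal consequence of the definitions of relative simplicial homology and of cokernel, once Lemmas \ref{lemma:relativehomology} and \ref{lemma:skeleton} have been established. The only thing one needs to be careful about is distinguishing the absolute chain group $\CC_{2g}(\Arc_g^1;\Q)$ that appears in the statement from the relative chain group that appears naturally in the homology computation; but as noted above, these have the same image under $\partial$, so the distinction does not affect the cokernel.
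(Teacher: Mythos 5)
Your argument is correct and is exactly the reasoning that the paper's one-line proof ("Immediate from Lemmas \ref{lemma:relativehomology} and \ref{lemma:skeleton}") has in mind: the skeleton lemma kills all relative chains in degrees $\leq 2g-2$, so the top relative homology group is a cokernel, and the surjectivity of $\CC_{2g}(\Arc_g^1;\Q)\twoheadrightarrow\CC_{2g}(\Arc_g^1,\Arc_g^1(\infty);\Q)$ lets you replace the relative chain group by the absolute one without changing the image of $\partial$. Nothing to add; your careful remark about absolute versus relative $2g$-chains having the same image is precisely the point one needs to make the statement literally true as written.
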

\begin{proof}
Immediate from Lemmas \ref{lemma:relativehomology} and \ref{lemma:skeleton}.
\end{proof}

\paragraph{The building.}
Our next goal is to construct the target of $\psi$.  This target should be $\Mod_g(\ell)$-invariant, and the 
obvious $\Mod_g(\ell)$-invariant object associated to $\Mod_g$ is the homology group
$\HH_1(\Sigma_g;\Z/\ell)$.  Letting $p$ be a prime dividing
$\ell$, the quotient $\HH_1(\Sigma_g;\Field_p) \cong \Field_p^{2g}$ of $\HH_1(\Sigma_g;\Z/\ell)$
is also invariant
under $\Mod_g(\ell)$.  To explain how we will use this to construct a target for
$\psi$, we need to introduce Tits buildings.  Fix some $n \geq 2$.
The {\em Tits building} associated to $\SL_n(\Field_p)$, denoted $\Tits_n(\Field_p)$,
is the simplicial complex
whose $r$-simplices are flags
\[0 \subsetneq V_0 \subsetneq V_1 \subsetneq \cdots \subsetneq V_r \subsetneq \Field_p^n.\]
The simplicial complex $\Tits_n(\Field_p)$ is $(n-2)$-dimensional, and the
Solomon--Tits theorem (\cite{SolomonTits}; see also \cite[Theorem IV.5.2]{BrownBuildings}) says that
in fact $\Tits_n(\Field_p)$ is homotopy equivalent to a wedge of $(n-2)$-dimensional spheres.  The
{\em Steinberg module} for $\SL_n(\Field_p)$ is defined to be
\[\St_n(\Field_p) \coloneq \RH_{n-2}(\Tits_n(\Field_p);\Q).\]
This is one of the most important representations of $\SL_n(\Field_p)$; for instance, 
it is the unique nontrivial irreducible representation of $\SL_n(\Field_p)$ whose
dimension is a power of $p$.

\paragraph{Apartments.}
The Solomon--Tits theorem also gives generators for $\St_n(\Field_p)$.  Let
$\fX_n$ be the simplicial complex whose $r$-simplices are increasing sequences
\[\emptyset \subsetneq I_0 \subsetneq I_1 \subsetneq \cdots \subsetneq I_r \subsetneq \{1,\ldots,n\}\]
of sets.  The simplicial complex $\fX_n$ is isomorphic to the boundary of the barycentric subdivision
of an $(n-1)$-simplex, and thus is homeomorphic to an oriented $(n-2)$-sphere; let $[\fX_n]$ be its
fundamental class.  Associated to an ordered sequence
$B = (\vec{v}_1,\ldots,\vec{v}_n)$ of $n$ nonzero vectors in $\Field_p^n$, there is a simplicial map
$\fX_n \rightarrow \Tits_n$ taking the vertex $\emptyset \subsetneq I \subsetneq \{1,\ldots,n\}$ to
$\langle \text{$\vec{v}_i$ $|$ $i \in I$} \rangle$.  The image under this map of $[\fX_n] \in \RH_{n-2}(\fX_n;\Q)$
is the {\em apartment class} $\Apartment_B \in \St_n(\Field_p) = \RH_{n-2}(\Tits_n(\Field_p);\Q)$.  The Solomon--Tits theorem
asserts that $\St_n(\Field_p)$ is generated by apartment classes.

\paragraph{Properties of apartments.}
The following result of Lee--Szczarba \cite{LeeSzczarbaCongruence} 
summarizes a number of basic properties of apartment classes.

\begin{theorem}[{\cite[Theorem 3.1]{LeeSzczarbaCongruence}}]
\label{theorem:apartmentrelations}
Let $n \geq 2$ and let $p$ be a prime.  The following then hold for all ordered sequences
$B = (\vec{v}_1,\ldots,\vec{v}_n)$ of nonzero vectors in $\Field_p^n$.
\begin{compactenum}
\item We have $\Apartment_B \neq 0$ if and only if $B$ is a basis for $\Field_p^n$.
\item Let $\sigma$ be a permutation of $\{1,\ldots,n\}$ and let $\sigma(B) = (\vec{v}_{\sigma(1)},\ldots,\vec{v}_{\sigma(n)})$.
Then $\Apartment_B = (-1)^{|\sigma|} \Apartment_{\sigma(B)}$.
\item For nonzero scalars $c_1,\ldots,c_n \in \Field_p^{\ast}$, 
let $B' = (c_1 \vec{v}_1,\ldots,c_n \vec{v}_n)$.  Then $\Apartment_B = \Apartment_{B'}$.
\item For an ordered sequence $C = (\vec{w}_1,\ldots,\vec{w}_{n+1})$ of nonzero vectors in $\Field_p^n$, define
$C_i = (\vec{w}_1,\ldots,\widehat{\vec{w}_i},\ldots,\vec{w}_{n+1})$.  Then
$\sum_{i=1}^{n+1} (-1)^i \Apartment_{C_i} = 0$.
\end{compactenum}
What is more, these generate all relations between apartment classes in $\St_n(\Field_p)$ in the sense that
$\St_n(\Field_p)$ is the $\Q$-vector space with generators the set of formal symbols
\[\Set{$\Apartment_B$}{$B$ an ordered sequence of $n$ nonzero vectors in $\Field_p^n$}\]
and relations the four relations listed above.
\end{theorem}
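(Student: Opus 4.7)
The plan is to verify each of the four relations directly, then to handle completeness of the presentation as the main obstacle. Throughout, let $f_B\colon \fX_n \to \Tits_n(\Field_p)$ denote the simplicial map whose image of the fundamental class $[\fX_n] \in \RH_{n-2}(\fX_n;\Q)$ defines $\Apartment_B$.

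Properties (2) and (3) are essentially formal. For (3), scaling $\vec{v}_i$ by any $c_i \neq 0$ leaves each subspace $\Span{\vec{v}_i : i \in I}$ unchanged, so $f_B = f_{B'}$ on the nose. For (2), a permutation $\sigma \in S_n$ induces a simplicial automorphism $\sigma_{\ast}$ of $\fX_n$ sending the vertex $I$ to $\sigma(I)$, and $f_{\sigma(B)} = f_B \circ \sigma_{\ast}$; since $\fX_n$ is the boundary of the barycentric subdivision of the standard $(n-1)$-simplex, the induced action of $S_n$ on its fundamental class is by sign.

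Property (1) splits into two cases. If $B$ is not a basis, the vectors span a proper subspace $W \subsetneq \Field_p^n$, so $f_B$ factors through the sub-building $\Tits(W)$, which has dimension at most $n-3$, forcing $\Apartment_B = 0$. Conversely, if $B$ is a basis, then $f_B$ embeds an entire $(n-2)$-sphere into $\Tits_n(\Field_p)$; applying the Solomon--Tits theorem with respect to a reference complete flag in which $B$ is in echelon form identifies $\Apartment_B$ with a standard basis element of $\St_n(\Field_p)$, hence nonzero. Property (4) then follows from a boundary calculation: the analogue of $\fX_n$ on the index set $\{1,\ldots,n+1\}$ is an $(n-1)$-sphere, and $C$ defines a chain map from it into $\Tits_n(\Field_p)$; since the target has dimension only $n-2$, the image of the fundamental $(n-1)$-cycle must vanish, and computing it cellularly via the stars of the singleton vertices $\{i\}$ (each contributing $(-1)^i \Apartment_{C_i}$) yields the asserted relation.

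The main obstacle is showing completeness of these four relations. Let $\cP_n$ denote the $\Q$-vector space on the formal symbols $\Apartment_B$ modulo the listed relations; the previous paragraphs produce a surjection $\cP_n \twoheadrightarrow \St_n(\Field_p)$. To establish injectivity, I would fix a complete reference flag and call a basis \emph{echelon} if it is upper-triangular with respect to that flag. My plan has two steps: first, show that every element of $\cP_n$ reduces to a $\Q$-combination of echelon apartment symbols via a Gauss-elimination procedure, repeatedly applying (2), (3), and (4) to clear subdiagonal entries; second, use the Solomon--Tits chamber decomposition of $\St_n(\Field_p)$ to see that the images of echelon apartments form a linearly independent set. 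A rank comparison then upgrades the surjection $\cP_n \twoheadrightarrow \St_n(\Field_p)$ to an isomorphism, completing the presentation.
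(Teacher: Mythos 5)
The paper does not actually prove this theorem: it cites Lee--Szczarba (their Theorem~3.1) for it, adding only Figure~2 to illustrate relation (4). Your proposal is therefore an independent attempt at a statement the paper treats as input, so there is no ``paper proof'' to compare against; I will instead assess the proposal on its own terms.

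Parts (2) and (3) are fine, but the remaining parts have real gaps. In part (1), the ``not a basis'' case does not factor through $\Tits(W)$ of dimension at most $n-3$: the spans $\langle\vec v_i : i\in I\rangle$ can equal $W$ itself, which \emph{is} a vertex of $\Tits_n(\Field_p)$. The fix is to observe that the image lies in the closed star of the vertex $W$ (equivalently the cone on $\Tits(W)$ with cone point $W$), which is contractible, so its reduced homology vanishes. In part (4), the assignment $I\mapsto\langle\vec w_i : i\in I\rangle$ is not a simplicial map from the barycentric sphere on $\{1,\ldots,n+1\}$ to $\Tits_n(\Field_p)$: for $|I|\geq n$ the span can be all of $\Field_p^n$ (not a vertex), and distinct $I$'s can have coincident spans. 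Even after promoting this to a chain map by killing degenerate simplices, the observation ``the image of the fundamental $(n-1)$-cycle must vanish'' is vacuous (the $(n-2)$-dimensional target has no $(n-1)$-chains) and does not produce the relation you want, which is an identity among $(n-2)$-cycles. The actual mechanism is a pairwise sign cancellation at the chain level: each $(n-2)$-simplex $\langle w_{\tau(1)}\rangle\subsetneq\cdots\subsetneq\langle w_{\tau(1)},\ldots,w_{\tau(n-1)}\rangle$ occurring in $\sum_i(-1)^i\Apartment_{C_i}$ is determined by only $n-1$ of the $\vec w_j$'s, hence appears in exactly two of the $\Apartment_{C_i}$, and one must verify that its two occurrences carry opposite signs. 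Your ``cellular computation via the stars of the singleton vertices'' is too vague to certify this.

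The completeness claim is where the real content of the theorem lives, and your treatment is a plan, not a proof. You must show that an arbitrary formal symbol in $\cP_n$ reduces to a $\Q$-combination of echelon symbols \emph{using only relations (1)--(4) inside $\cP_n$}, before appealing to anything about $\St_n(\Field_p)$ itself. You give no indication of which instances of relation (4) to apply, no measure of complexity that decreases at each step (so termination is unaddressed), and no argument that subdiagonal entries can actually be cleared with the listed moves. This reduction is the bulk of Lee--Szczarba's argument; as written, your proposal defers it rather than supplying it, so the asserted isomorphism $\cP_n\cong\St_n(\Field_p)$ is not established.
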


\begin{remark}
Lee--Szczarba stated their result differently than we have, but the two formulations are equivalent.  The
only non-obvious relation among those above is the fourth one; see Figure \ref{figure:apartmentscancel} for
an explanation of it.
\end{remark}

\Figure{figure:apartmentscancel}{ApartmentsCancel}{
As we illustrate here in the case $n=3$, the apartment
classes $\Apartment_{C_i}$ can be placed on the boundary of an $n$-dimensional simplex.  In the picture, the vertices labeled with the vectors $\vec{w}_i$ represent the lines
spanned by the $\vec{w}_i$ while the unlabeled vertices represent the $2$-dimensional subspaces spanned
by the vectors on their two neighbors.}{42}

\paragraph{The map on chains.}
If $V$ is a vector space over $\Q$, then Lemma \ref{lemma:arcdescription} implies that
a homomorphism
\[\psi\colon \St(\Sigma_g) \rightarrow V\]
can be constructed by writing down a homomorphism
\[\phi\colon \CC_{2g-1}(\Arc_g^1,\Arc_g^1(\infty);\Q) \rightarrow V\]
that 
vanishes on the image of 
\[\partial\colon \CC_{2g}(\Arc_g^1;\Q) \rightarrow \CC_{2g-1}(\Arc_g^1,\Arc_g^1(\infty);\Q).\]
Recall that $p$ is a fixed prime dividing $\ell$.  Fix
an identification of $\HH_1(\Sigma_g;\Field_p)$ with
$\Field_p^{2g}$ that takes the algebraic intersection form on $\HH_1(\Sigma_g;\Field_p)$ to  
the standard symplectic form on $\Field_p^{2g}$.
Given an oriented closed curve $\gamma$ on $\Sigma_g$, let $[\gamma]_p$ be the associated
element of $\HH_1(\Sigma_g;\Field_p) = \Field_p^{2g}$.
We can define a map $\tphi\colon \CC_{2g-1}(\Arc_g^1;\Q) \rightarrow \St_{2g}(\Field_p)$ as follows.
\begin{compactitem}
\item Let $\sigma$ be an oriented $(2g-1)$-simplex of $\Arc_g^1$.  Write $\sigma = \{\alpha_0,\ldots,\alpha_{2g-1}\}$, and 
orient each curve
$\alpha_i$ in an arbitrary way.  If $[\alpha_i]_p = 0$ for some $i$, then define $\tphi(\sigma)=0$.  Otherwise, 
set $B = ([\alpha_0]_p,\ldots,[\alpha_{2g-1}]_p)$ and define $\tphi(\sigma) = \Apartment_B$.  
\end{compactitem}
The
second and third relations in Theorem \ref{theorem:apartmentrelations} show that $\tphi$ does not depend on any of the
choices we have made.  Moreover, the first relation in Theorem \ref{theorem:apartmentrelations} combined with
Lemma \ref{lemma:recognizespan} shows that $\tphi(\sigma) = 0$ 
if there is any non-simply connected component in the complement of the $\alpha_i$, so $\tphi$ vanishes on
$\CC_{2g-1}(\Arc_g^1(\infty))$ and thus descends to a map 
\[\phi\colon \CC_{2g-1}(\Arc_g^1,\Arc_g^1(\infty);\Q) \rightarrow \St_{2g}(\Field_p).\]

\paragraph{A problem.}
Unfortunately, $\phi$ does not vanish on the image of $\partial$.
Consider an oriented $2g$-simplex $\eta = \{\beta_0,\ldots,\beta_{2g}\}$ of $\Arc_g^1$.  
Arbitrarily orient each $\beta_i$.  There are two cases where $\phi(\partial(\eta)) = 0$.
\begin{compactitem}
\item If all the $[\beta_i]_p$ are
nonzero, then $\phi$ takes $\partial(\eta)$ to a relation in $\St_{2g}(\Field_p)$ like in the fourth 
relation in Theorem \ref{theorem:apartmentrelations}, and in particular $\phi(\partial(\eta)) = 0$.  
\item If more than one of the $[\beta_i]_p$ are zero, then every simplex in $\partial(\eta)$ contains a curve
whose homology class is $0$, so $\phi$ takes every simplex in $\partial(\eta)$ to $0$.
\end{compactitem}
However, if exactly one of the $[\beta_i]_p$ is zero, then we might have $\phi(\partial(\eta)) \neq 0$.  
To understand precisely what is going on, assume that $[\beta_j]_p = 0$
for some $j$ but that $[\beta_i]_p \neq 0$ for all $i \neq j$.  Let $C_j$ be the result of deleting
$[\beta_j]$ from $([\beta_0]_p,\ldots,[\beta_{2g}])$.  We then have $\phi(\partial(\eta)) = (-1)^j \Apartment_{C_j}$.
It is certainly possible for $\Apartment_{C_j} \neq 0$.  To correct for this, we make the following observation
about the homology classes making up $C_j$.
Since $[\beta_j]_p = 0$, the curve $\beta_j$ must be a separating
simple closed curve (if it were nonseparating, then it would represent a primitive element 
of $\HH_1(\Sigma_g;\Z)$, and hence its homology class would remain nonzero when reduced modulo $p$).  Letting
$S_1$ and $S_2$ be the subsurfaces on either side of $\beta_j$, we obtain a splitting
\[\HH_1(\Sigma_g;\Field_p) = \HH_1(S_1;\Field_p) \oplus \HH_1(S_2;\Field_p)\]
that is orthogonal with respect to the algebraic intersection form.  Our observation then is that for each 
$i \neq j$, the homology class
$[\beta_i]_p$ lies in either $\HH_1(S_1;\Field_p)$ or $\HH_1(S_2;\Field_p)$.

\paragraph{Separated apartments.}
This motivates the following definition.  An apartment $\Apartment_B$ of $\St_{2g}(\Field_p)$ is a
{\em separated apartment} if there exists a nontrivial splitting $\Field_p^{2g} = S_1 \oplus S_2$ that is
orthogonal with respect to the standard symplectic form on $\Field_p^{2g}$ such that each $\vec{v}$ occurring
in $B$ lies in either $S_1$ or $S_2$.  The following lemma shows that this implies that
the symplectic form on $\Field_p$ restricts to a symplectic form on each $S_i$.

\begin{lemma}
\label{lemma:orthogonalsymplectic}
Let $V$ be a symplectic vector space over a field and let $W_1,\ldots,W_k \subset V$ be subspaces
such that $V$ is the internal direct sum of the $W_i$.  Assume that the $W_i$ are pairwise orthogonal
to each other.  Then each $W_i$ is a symplectic subspace of $V$, i.e.\ the restriction of the
symplectic form on $V$ to $W_i$ is a symplectic form.
\end{lemma}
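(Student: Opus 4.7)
The plan is to verify that the restriction of the symplectic form on $V$ to each $W_i$ is nondegenerate; since the restriction of an alternating form is automatically alternating, nondegeneracy is the only thing left to check. Concretely, fix an index $i$ and suppose $w \in W_i$ satisfies $\omega(w, w') = 0$ for every $w' \in W_i$, where $\omega$ denotes the symplectic form on $V$. The goal is to show $w = 0$.

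The key observation is that the orthogonality hypothesis lets me promote this ``annihilates all of $W_i$'' condition to ``annihilates all of $V$''. Given an arbitrary $v \in V$, use the internal direct sum decomposition to write $v = w_1 + \cdots + w_k$ with $w_j \in W_j$. Then
\[
\omega(w, v) = \sum_{j=1}^{k} \omega(w, w_j) = \omega(w, w_i) + \sum_{j \neq i} \omega(w, w_j).
\]
The first term vanishes by hypothesis on $w$, and each term in the sum vanishes because $w \in W_i$ is orthogonal to $w_j \in W_j$ for $j \neq i$. Hence $\omega(w, v) = 0$ for every $v \in V$, and nondegeneracy of $\omega$ on $V$ forces $w = 0$.

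This immediately yields that the restriction of $\omega$ to $W_i$ is nondegenerate, and hence $W_i$ is a symplectic subspace. There is no real obstacle here; the only subtlety worth flagging is ensuring that the symplectic hypothesis is being used correctly, namely that a nondegenerate alternating form on $V$ restricts to an alternating form on any subspace, and that one then just needs nondegeneracy, which is exactly what the above argument establishes.
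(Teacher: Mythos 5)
Your proof is correct and takes essentially the same approach as the paper: fix a vector in $W_i$ that is orthogonal to all of $W_i$, use the pairwise orthogonality to conclude it is orthogonal to all of $V$, and then invoke nondegeneracy on $V$. The only difference is that you spell out the direct sum decomposition explicitly where the paper states the conclusion more briefly.
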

\begin{proof}
Fix some $1 \leq i \leq k$ and let $r \in W_i$ be orthogonal to every vector in $W_i$.  We must prove that $r=0$.
Since $W_i$
is orthogonal to every $W_{i'}$ with $i' \neq i$, the vector $r$ is also orthogonal to every such $W_{i'}$.
We deduce that $r$ is orthogonal to every vector in $V$, and thus that $r=0$.
\end{proof}

\noindent
The {\em separated subspace} of $\St_{2g}(\Field_p)$, denoted $\StSep_{2g}(\Field_p)$, is
the subspace spanned by separated
apartments.  While $\StSep_{2g}(\Field_p)$ is not preserved by the action of $\SL_{2g}(\Field_p)$, it is
preserved by the subgroup $\Sp_{2g}(\Field_p)$ of $\SL_{2g}(\Field_p)$.  
By the previous paragraph, for each $2g$-simplex $\eta$ of $\Arc_g^1$, either
$\phi(\partial(\eta)) = 0$ or $\phi(\partial(\eta)) \in \StSep_{2g}(\Field_p)$.  Define
\[\StNS_{2g}(\Field_p) = \St_{2g}(\Field_p) / \StSep_{2g}(\Field_p),\] 
so $\StNS_{2g}(\Field_p)$ is
an $\Sp_{2g}(\Field_p)$-representation.  We deduce that $\phi$ descends
to a well-defined map 
\[\psi\colon \St(\Sigma_g) \cong \Coker(\partial) \longrightarrow \StNS_{2g}(\Field_p).\]
By its construction, $\psi$ is equivariant under the action of the whole mapping class group
$\Mod_g$ and is invariant under the action of the subgroup $\Mod_g(\ell)$.  

\paragraph{Putting everything together.}
The main result of this section is the following, which is part of the assertion of
Theorem \ref{maintheoremprime:cohomology}.

\begin{proposition}
\label{proposition:surjective}
Fix $g, \ell \geq 2$.  Let $p$ be a prime dividing $\ell$.  Then the map
\[\psi\colon \St(\Sigma_g) \longrightarrow \StNS_{2g}(\Field_{p})\]
constructed above is surjective.
\end{proposition}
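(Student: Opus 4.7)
Plan. The quotient $\StNS_{2g}(\Field_p) = \St_{2g}(\Field_p)/\StSep_{2g}(\Field_p)$ is spanned as a $\Q$-vector space by the cosets of the apartment classes $\Apartment_B$, where $B = (\vec{v}_1,\ldots,\vec{v}_{2g})$ ranges over ordered bases of $\Field_p^{2g}$ (by the Solomon--Tits theorem). My goal is to show that each such coset lies in $\Image(\psi)$.

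My first step would be to exploit equivariance. Since $\psi$ is $\Mod_g$-equivariant and the induced action of $\Mod_g$ on $\StNS_{2g}(\Field_p)$ factors through the surjective homomorphism $\Mod_g \to \Sp_{2g}(\Z) \to \Sp_{2g}(\Field_p)$, the image of $\psi$ is an $\Sp_{2g}(\Field_p)$-invariant subspace. Next, I would realize a standard symplectic basis geometrically: the classical $4g$-gon presentation of $\Sigma_g$ furnishes a simplex $\sigma_0 = \{a_1, b_1, \ldots, a_g, b_g\}$ of $\Arc_g^1$ whose loops are pairwise disjoint away from the basepoint and whose appropriately-oriented homology classes form the standard symplectic $\Z$-basis $B_0$ of $\HH_1(\Sigma_g;\Z)$; this simplex lies outside $\Arc_g^1(\infty)$ by Lemma \ref{lemma:recognizespan}, and by construction $\phi(\sigma_0) = \Apartment_{B_0 \bmod p}$. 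Combining with $\Sp_{2g}(\Field_p)$-invariance and the transitivity of $\Sp_{2g}(\Field_p)$ on ordered symplectic $\Field_p$-bases, I obtain $\Apartment_B \in \Image(\psi)$ for every ordered symplectic $\Field_p$-basis $B$.

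The remaining task is purely algebraic: to show that modulo $\StSep_{2g}(\Field_p)$, the apartment classes coming from symplectic bases span all of $\St_{2g}(\Field_p)$. My main tool would be the Lee--Szczarba cycle relation (Theorem \ref{theorem:apartmentrelations}(4)): adjoining an auxiliary nonzero vector $\vec{w}$ to a non-symplectic basis $B$ expresses $\Apartment_B$ as an alternating sum of apartments $\Apartment_{B_i(\vec{w})}$, where $B_i(\vec{w})$ is obtained by replacing $\vec{v}_i$ with $\vec{w}$. The idea is to choose $\vec{w}$ so that every $B_i(\vec{w})$ is either symplectic, fails to be a basis, or is separated (making the corresponding summand lie in $\Image(\psi)$ or vanish modulo $\StSep_{2g}(\Field_p)$), and to iterate this procedure guided by a complexity measure on the Gram matrix of $B$ with respect to the standard symplectic form that strictly decreases at each stage.

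The main obstacle is this last step: designing a workable complexity measure and verifying that a suitable $\vec{w}$ always exists. Note that a purely geometric realization of $\Apartment_B$ for every $B$ seems infeasible, since two distinct based arcs of $\Arc_g^1$ meet only at the basepoint and therefore have algebraic intersection number restricted to $\{-1,0,1\}$ (determined solely by whether their endpoints interlink at $\ast$), whereas Gram matrices of arbitrary ordered $\Field_p$-bases can have entries of any value. Consequently the algebraic reduction via the cycle relation, combined with the symplectic geometric input above, appears to be the essential route.
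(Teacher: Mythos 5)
Your outline matches the paper's strategy at a high level — exploit $\Mod_g$-equivariance and the surjection $\Mod_g \to \Sp_{2g}(\Field_p)$ to reduce to a single orbit of apartment classes, realize one such apartment geometrically from a simplex of $\Arc_g^1$, and then argue algebraically that these apartments span $\StNS_{2g}(\Field_p)$. You also correctly diagnose why the geometric input alone cannot suffice: arcs disjoint away from $\ast$ have pairwise algebraic intersection numbers in $\{-1,0,1\}$, so only apartments of very special bases can appear directly as $\phi(\sigma)$ for a $(2g-1)$-simplex $\sigma$. But the load-bearing step — showing that the one orbit of apartments you realize actually spans $\StNS_{2g}(\Field_p)$ — is left as an acknowledged ``main obstacle,'' and this is precisely where the genuine content of the proposition lives. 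Without a concrete decreasing complexity measure and a verified choice of auxiliary vector $\vec{w}$ at every stage, there is no proof; the plan up to that point is essentially reduction to the hard part.

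The paper resolves the hard part with a specific filtration that you would likely not stumble into by working with a \emph{standard} symplectic basis. Rather than targeting bases with block-diagonal Gram matrix $\bigoplus J$, the paper targets ``chain'' bases $B = (\vec{v}_1,\ldots,\vec{v}_{2g})$ with $\omega(\vec{v}_i,\vec{v}_{i+1})=1$ and all other pairings zero. It defines nested subspaces $V_1 \supseteq V_2 \supseteq \cdots \supseteq V_{2g}$ of $\StNS_{2g}(\Field_p)$, where $V_k$ is spanned by $\NS{\Apartment_B}$ such that the first $k$ vectors satisfy the chain conditions \emph{and are orthogonal to the remaining vectors}, and it proves by induction that each $V_k$ spans. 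The inductive step prepends a vector $\vec{w}$ with $\omega(\vec{w},\vec{v}_1)=1$ and $\omega(\vec{w},\vec{v}_i)=0$ for $i\geq 2$, then applies the Lee--Szczarba cycle relation to $C = (\vec{w},\vec{v}_1,\ldots,\vec{v}_{2g})$; the key point is that for $2\le i \le k$, deleting $\vec{v}_{i-1}$ produces a separated apartment (the first $i-1$ vectors of $C_i$ span a subspace orthogonal to the span of the rest, by the third defining condition of $V_{k-1}$), while for $i>k$ one checks the resulting $C_i$ already lies in $V_k$. The ``orthogonal to the tail'' condition in the definition of $V_k$ is exactly what makes the separatedness of the middle terms fall out, and it is not obvious how to set up an analogous filtration adapted to block-diagonal Gram matrices, since the natural partial bases there come in pairs and the orthogonality bookkeeping does not telescope as cleanly. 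Your chosen geometric simplex (the $4g$-gon edges) would then also have to be replaced by a chain of curves (as in the paper's Figure~\ref{figure:arcsimplex}), which is equally realizable. In short: the skeleton of your argument is sound and aligned with the paper's, but the filtration $V_k$ and the verification of the three cases in the cycle relation constitute the actual proof, and those are absent from the proposal.
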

\begin{proof}
For $x \in \St_{2g}(\Field_p)$, write $\NS{x}$ for the image of $x$ in $\StNS_{2g}(\Field_p)$.
Also, let $\omega(\cdot,\cdot)$ be the standard symplectic form on $\Field_p^{2g}$.
Our proof will have two steps.

\begin{stepa}
\label{stepa:broaddus}
The vector space $\StNS_{2g}(\Field_{p})$ is spanned by the set of
$\NS{\Apartment_B}$ such that $B = (\vec{v}_1,\ldots,\vec{v}_{2g})$ satisfies the
following two conditions:
\begin{compactitem}
\item $\omega(\vec{v}_i,\vec{v}_{i+1}) = 1$ for $1 \leq i < 2g$, and
\item $\omega(\vec{v}_i,\vec{v}_j) = 0$ for $1 \leq i,j \leq 2g$ such that $|i-j|>1$.
\end{compactitem}
\end{stepa}
\begin{proof}[Proof of Step \ref{stepa:broaddus}]
The proof of this step is inspired by the proof of \cite[Proposition 4.6]{BroaddusSteinberg}.
For $1 \leq k \leq 2g$, let $V_k$ be the subspace of $\StNS_{2g}(\Field_p)$ spanned
by the set of $\NS{\Apartment_B}$ such that $B = (\vec{v}_1,\ldots,\vec{v}_{2g})$ satisfies
the following three conditions:
\begin{compactitem}
\item $\omega(\vec{v}_i,\vec{v}_{i+1}) = 1$ for $1 \leq i < k$, and
\item $\omega(\vec{v}_i,\vec{v}_j) = 0$ for $1 \leq i,j \leq k$ such that $|i-j|>1$, and
\item $\omega(\vec{v}_i,\vec{v}_j) = 0$ for $1 \leq i < k$ and $k<j\leq 2g$.
\end{compactitem}
We will prove that 
$V_k$ spans $\StNS_{2g}(\Field_p)$ by induction on $k$; the case $k=2g$ is precisely the
claim we are trying to prove.  The base case $k=1$ is trivial,
so assume that $1 < k \leq 2g$ and that $V_{k-1}$ spans $\StNS_{2g}(\Field_p)$.
Consider an apartment $\Apartment_B$ such that $B$ satisfies the three conditions
above for $k-1$, and thus $\NS{\Apartment_B} \in V_{k-1}$.  It is enough to prove
that $\NS{\Apartment_B} \in V_k$.
Since the symplectic form $\omega(\cdot,\cdot)$ is nondegenerate, we can find
$\vec{w} \in \Field_p^{2g}$ such that the following two conditions hold:
\begin{compactitem}
\item $\omega(\vec{w},\vec{v}_1) = 1$, and
\item $\omega(\vec{w},\vec{v}_i) = 0$ for $2 \leq i \leq 2g$.
\end{compactitem}
Define $C = (\vec{w},\vec{v}_1,\vec{v}_2,\ldots,\vec{v}_{2g})$, and for
$1 \leq i \leq 2g+1$, let $C_i$ be the result of deleting the $i^{\text{th}}$ vector
in $C$.  The fourth relation in Theorem \ref{theorem:apartmentrelations} says that
\begin{equation}
\label{eqn:makealine}
\sum_{i=1}^{2g+1} (-1)^{i} \Apartment_{C_i} = 0
\end{equation}
in $\St_{2g}(\Field_p)$.  We now make the following observations.
\begin{compactitem}
\item $C_1 = B$.
\item For $2 \leq i \leq k$, we claim that $\Apartment_{C_i} \in \StSep_{2g}(\Field_p)$ and hence
that $\NS{\Apartment_{C_i}} = 0$.  This claim is trivial if $\Apartment_{C_i} = 0$, so assume it is nonzero.
We have 
\[C_i = (\vec{w},\vec{v}_1,\ldots,\widehat{{\vec{v}_{i-1}}},\ldots,\vec{v}_{2g}).\]  
Since we are assuming that $\Apartment_{C_i} \neq 0$, these vectors are a basis for $\Field_p^{2g}$.  Let
$S_1 \subset \Field_p^{2g}$ be the span of the first $(i-1)$ vectors in $C_i$ and let
$S_2 \subset \Field_p^{2g}$ be the span of the remaining ones.  We thus have $\Field_p^{2g} = S_1 \oplus S_2$.
The key observation now is that by construction $S_1$ is orthogonal to $S_2$.  By Lemma \ref{lemma:orthogonalsymplectic}, this implies that
each $S_i$ is a symplectic subspace of $\Field_p^{2g}$.  From this, we see that
$\Apartment_{C_i} \in \StSep_{2g}(\Field_p)$, as claimed. 
\item For $k<i\leq 2g+1$, we have $\NS{\Apartment_{C_i}} \in V_k$
\end{compactitem}
Combining these three observations with \eqref{eqn:makealine}, we see that
$\NS{\Apartment_B} \in V_k$, as desired.
\end{proof}

To prove Proposition \ref{proposition:surjective}, it is enough to show that
the image of $\psi$ contains all the generators identified in Step \ref{stepa:broaddus}, which
is the content of our next step.

\begin{stepa}
\label{stepa:realize}
Consider an apartment $\Apartment_B$ such that $B = (\vec{v}_1,\ldots,\vec{v}_{2g})$ satisfies
the two conditions stated in Step \ref{stepa:broaddus}.  Then there exists an element
$x \in \St(\Sigma_g)$ such that $\psi(x) = \NS{\Apartment_B}$.
\end{stepa}
\begin{proof}[Proof of Step \ref{stepa:realize}]
The group $\Sp_{2g}(\Field_p)$ acts transitively on ordered bases $B$ for
$\Field_p^{2g}$ that satisfy the two conditions in Step \ref{stepa:broaddus}.  Since the map
$\Mod_g \rightarrow \Sp_{2g}(\Field_p)$ coming from the action on $\HH_1(\Sigma_g;\Field_p)$
is surjective and the homomorphism $\psi$ is equivariant with respect to the mapping class group
actions on $\St(\Sigma_g)$ and $\StNS_{2g}(\Field_p)$ (the latter action coming from the
surjection $\Mod_g \rightarrow \Sp_{2g}(\Field_p)$),
we see that it is enough to find an element
$x \in \St(\Sigma_g)$ such that $\psi(x) = \NS{\Apartment_C}$, where
$C = (\vec{w}_1,\ldots,\vec{w}_{2g})$ is {\em some} ordered basis for $\Field_p^{2g}$
satisfying the two conditions in Step \ref{stepa:broaddus} (with $\vec{w}_i$ swapped for
$\vec{v}_i$).  
Examining the construction of $\psi$, we see that we can take $x$ to be
the image in $\St(\Sigma_g)$ of the $(2g-1)$-simplex
\[\sigma = \{\alpha_1,\gamma_1,\alpha_2,\gamma_2,\ldots,\alpha_g,\gamma_g\}\]
of $\Arc_g^1$ depicted in Figure \ref{figure:arcsimplex}.
\end{proof}

This completes the proof of Proposition \ref{proposition:surjective}.
\end{proof}

\Figure{figure:arcsimplex}{ArcSimplex}{A picture for $g=3$ of the simplex
$\sigma = \{\alpha_1,\gamma_1,\alpha_2,\gamma_2,\ldots,\alpha_g,\gamma_g\}$ of $\Arc_g^1$ used in the proof of Proposition \ref{proposition:surjective}.  The
pattern for higher $g$ is evident.}{80}

To derive Theorem \ref{maintheoremprime:cohomology} from Proposition \ref{proposition:surjective}, it is
enough to prove the following proposition.

\begin{proposition}
\label{proposition:nosepbig}
Let $g \geq 1$ and let $p$ be a prime.  Then
\[\dim_{\Q} \StNS_{2g}(\Field_p) = \frac{|\Sp_{2g}(\Field_p)|}{g(p^{2g}-1)}.\]
\end{proposition}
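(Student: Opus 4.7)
The plan is to combine the branching rule for $\St_{2g}(\Field_p)|_{\Sp_{2g}(\Field_p)}$ established in Section~\ref{section:decompose} with Euler's $q$-exponential identity. I expect that branching rule to express $\St_{2g}(\Field_p)$, as an $\Sp_{2g}(\Field_p)$-representation, as a direct sum indexed by $\Sp_{2g}(\Field_p)$-orbits of unordered orthogonal symplectic decompositions $\Field_p^{2g} = W_1 \oplus \cdots \oplus W_k$; for each such decomposition, with $\dim W_i = 2g_i$, the summand should be an induction from the stabilizer (a wreath-type product built from the $\Sp_{2g_i}(\Field_p)$ and permutations of equal-sized blocks) of an external tensor product of ``indecomposable'' pieces $\bVindec_{g_i}$. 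By construction, $\StSep_{2g}(\Field_p)$ collects precisely the summands with $k \geq 2$, so the quotient $\StNS_{2g}(\Field_p)$ is identified with the single indecomposable summand $\bVindec_g$.

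Taking dimensions and normalizing by $|\Sp_{2g}(\Field_p)|$, and writing $a_g := \dim_{\Q}\StNS_{2g}(\Field_p)$, this decomposition yields
\[
\frac{\dim_{\Q}\St_{2g}(\Field_p)}{|\Sp_{2g}(\Field_p)|} \;=\; \sum_{\lambda \vdash g}\prod_{j}\frac{1}{m_j(\lambda)!}\prod_{i}\frac{a_{g_i}}{|\Sp_{2g_i}(\Field_p)|},
\]
where $\lambda = (g_1,\ldots,g_k)$ ranges over partitions of $g$ and $m_j(\lambda)$ is the number of parts of size $j$. This is exactly the expansion predicted by the exponential formula: setting
\[
F(x) \;=\; \sum_{g \geq 0}\frac{\dim_{\Q}\St_{2g}(\Field_p)}{|\Sp_{2g}(\Field_p)|}\,x^g
\qquad\text{and}\qquad
A(x) \;=\; \sum_{g \geq 1}\frac{a_g}{|\Sp_{2g}(\Field_p)|}\,x^g,
\]
the identity above reads $F(x) = \exp(A(x))$ in $\Q(p)[[x]]$, so $A(x) = \log F(x)$.

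It then remains to evaluate $F(x)$ in closed form. Using $\dim_{\Q}\St_{2g}(\Field_p) = p^{g(2g-1)}$ and $|\Sp_{2g}(\Field_p)| = p^{g^2}\prod_{k=1}^{g}(p^{2k}-1)$, one obtains
\[
F(x) \;=\; \sum_{g \geq 0}\frac{p^{g(g-1)}\,x^g}{\prod_{k=1}^{g}(p^{2k}-1)}.
\]
Applying Euler's $q$-exponential identity $\sum_{n \geq 0} z^n/(q;q)_n = \prod_{i \geq 0}(1-q^i z)^{-1}$ with $q = p^{-2}$ and $z = x/p^{2}$ converts this into the product
\[
F(x) \;=\; \prod_{j \geq 1}\frac{1}{1 - p^{-2j}x}.
\]
Taking the formal logarithm and expanding the geometric series $\sum_{j \geq 1} p^{-2jm} = 1/(p^{2m}-1)$ gives
\[
A(x) \;=\; \log F(x) \;=\; \sum_{m \geq 1}\frac{x^m}{m\,(p^{2m}-1)}.
\]
Comparing coefficients of $x^g$ produces $a_g / |\Sp_{2g}(\Field_p)| = 1/(g(p^{2g}-1))$, i.e.\ the asserted formula.

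The main obstacle is bridging the representation-theoretic output of Section~\ref{section:decompose} to the exponential-formula bookkeeping cleanly: one must verify that the multiplicities in the $\Sp_{2g}(\Field_p)$-orbit decomposition produce exactly the $1/m_j(\lambda)!$ factors (rather than extra factors coming from the stabilizer or from the induction), and that $\StNS_{2g}(\Field_p)$ is isomorphic to the single indecomposable summand $\bVindec_g$ on the nose, not merely a quotient of some filtered piece. Once these combinatorial inputs are in place, the rest is the formal power series manipulation sketched above.
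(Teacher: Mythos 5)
Your proof is correct and follows essentially the same strategy as the paper: both use Theorem~\ref{theorem:decomposition} to derive the recurrence that is the paper's Lemma~\ref{lemma:thetarecurrence}, package it via the exponential formula, and invoke Euler's $q$-series identity. The only difference is organizational---you solve the recurrence directly by taking $A = \log F$, whereas the paper separately checks that the candidate $\lambda_g = 1/(g(p^{2g}-1))$ satisfies the same recurrence (Lemma~\ref{lemma:lambdarecurrence}) and then deduces $\theta_g = \lambda_g$ by induction, a formally equivalent way of inverting the same exponential relation.
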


We will prove Proposition \ref{proposition:nosepbig} in \S \ref{section:bounds}.  This is preceded by a careful study of
the structure of $\St_{2g}(\Field_p)$ as an $\Sp_{2g}(\Field_p)$ representation in \S \ref{section:decompose}.

\section{Decomposing the nonseparated building}
\label{section:decompose}

In preparation for proving Proposition \ref{proposition:nosepbig} in \S \ref{section:bounds}, 
this section is devoted to understanding the $\Sp_{2g}(\Field_p)$-module structure of 
$\St_{2g}(\Field_p)$.  We begin with \S \ref{section:decomposelinearalgebra}, which is a
preliminary section containing a result about representations of posets.  Our main result
is stated and proved in \S \ref{section:decomposemain}.  This proof depends on three
lemmas whose proofs are postponed; these proofs are in \S \ref{section:decomposeproduct}, \S \ref{section:decomposeprojection},
and \S \ref{section:decomposecrossproj}.

\subsection{Representations of posets}
\label{section:decomposelinearalgebra}

We begin with two definitions.

\begin{definition}
A {\em linear representation} $\bV$ of a poset $(\cP,\preceq)$ is a vector space $\bV$
equipped with subspaces $\bV(A)$ for all $A \in \cP$ such that $\bV(A) \subseteq \bV(B)$
for all $A,B \in \cP$ satisfying $A \preceq B$.  The vector space $\bV$ is the {\em underlying
vector space} of the representation, and the {\em span} of the representation is the span
of the subspaces $\Set{$\bV(A)$}{$A \in \cP$}$.
\end{definition}

\begin{definition}
Let $\bV$ be a linear representation of a poset $(\cP,\preceq)$ and let $A \in \cP$.
The {\em decomposable subspace} of $\bV(A)$, denoted $\bVdec(A)$, is the span
of the set of subspaces $\Set{$\bV(A')$}{$A' \preceq A$, $A' \neq A$}$ of $\bV(A)$.  The
{\em indecomposable part} of $\bV(A)$, denoted $\bVindec(A)$, is the quotient
$\bV(A) / \bVdec(A)$.
\end{definition}

Let $\bV$ be a linear representation of a poset $(\cP,\preceq)$ and let $W$ be
the span of $\bV$.  The main result of this section gives a condition that ensures
that
\[W \cong \prod_{A \in \cP} \bVindec(A).\]
To construct this isomorphism, we will use the following.

\begin{definition}
Let $\bV$ be a linear representation of a poset $(\cP,\preceq)$.  A {\em projection
system} $\pi$ for $\bV$ consists of linear maps $\pi_A\colon \bV \rightarrow \bV(A)$
for each $A \in \cP$ such that $\pi_A(a) = a$ for all $a \in \bV(A)$.  Given
a projection system $\pi$, for $A \in \cP$
define $\piindec_A\colon \bV \rightarrow \bVindec(A)$ to be the composition
\[\bV \stackrel{\pi_A}{\longrightarrow} \bV(A) \longrightarrow \bV(A) / \bVdec(A) = \bVindec(A).\]
\end{definition}

Our main result is then as follows.

\begin{proposition}
\label{proposition:linearalgebra}
Let $\bV$ be a linear representation of a finite poset $(\cP,\preceq)$, let $W$ be the
span of $\bV$, and let $\pi$ be a projection system for $\bV$.  Assume that
\begin{equation}
\label{eqn:projasm}
\pi_A(\bV(B)) \subset \bVdec(A) \quad \text{for all $A, B \in \cP$ satisfying
$A \npreceq B$}.
\end{equation}
Then the map
\[W \longrightarrow \prod_{A \in \cP} \bVindec(A)\]
obtained by taking the direct product of the restrictions of the
$\piindec_A\colon \bV \rightarrow \bVindec(A)$ to $W$ is an isomorphism.
\end{proposition}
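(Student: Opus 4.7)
The plan is to prove Proposition~\ref{proposition:linearalgebra} by induction on $|\cP|$, with the empty base case trivial. For the inductive step I would pick a \emph{maximal} element $A_0 \in \cP$ and set $\cP' = \cP \setminus \{A_0\}$. Since $A_0$ is maximal, $A_0 \not\prec A$ for every $A \in \cP'$, so the decomposable subspace $\bVdec(A)$ is the same whether computed in $\cP$ or in $\cP'$. Hence $\bV$ restricts to a linear representation of $\cP'$ with the same projection system (restricted to $\cP'$), the hypothesis \eqref{eqn:projasm} is inherited, and by induction the analogous map $\Psi'\colon W' \to \prod_{A \in \cP'} \bVindec(A)$ is an isomorphism, where $W' = \sum_{A \in \cP'} \bV(A)$.

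The next step is to build a short exact sequence $0 \to W' \to W \to \bVindec(A_0) \to 0$ in which the quotient map is induced by $\piindec_{A_0}$. Two observations using \eqref{eqn:projasm} do the work. First, $\piindec_{A_0}$ vanishes on $W'$: for $v \in \bV(A)$ with $A \in \cP'$ one has $A_0 \neq A$ and (by maximality of $A_0$) $A_0 \npreceq A$, so \eqref{eqn:projasm} yields $\pi_{A_0}(v) \in \bVdec(A_0)$. Second, $\bV(A_0) \cap W' = \bVdec(A_0)$: the inclusion $\supseteq$ is immediate (each $A' \prec A_0$ lies in $\cP'$, so $\bV(A') \subseteq W'$), while $\subseteq$ follows because any $v \in \bV(A_0) \cap W'$ satisfies both $\piindec_{A_0}(v) = v + \bVdec(A_0)$ (since $\pi_{A_0}$ fixes $\bV(A_0)$ pointwise) and $\piindec_{A_0}(v) = 0$ by the first observation.

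To finish, I would map this short exact sequence to the split exact sequence
\[ 0 \to \prod_{A \in \cP'} \bVindec(A) \to \prod_{A \in \cP} \bVindec(A) \to \bVindec(A_0) \to 0 \]
using $\Psi'$ on the left, the map $\Psi$ of the proposition in the middle, and the identity of $\bVindec(A_0)$ on the right. Commutativity of the resulting diagram is immediate from the fact that the component of $\Psi$ in the $A$-factor is $\piindec_A$ for every $A$. Since the outer vertical maps are isomorphisms, the five lemma gives that $\Psi$ is an isomorphism.

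The hard part will be the careful bookkeeping of the two observations about $\piindec_{A_0}$ in the inductive step; everything else is routine diagram chasing. In particular, the asymmetry of the hypothesis \eqref{eqn:projasm} (it constrains $\pi_A$ on $\bV(B)$ when $A \npreceq B$, but says nothing when $A \prec B$) is exactly what forces us to strip off a \emph{maximal} rather than minimal element of $\cP$: only then does \eqref{eqn:projasm} let us rule out contributions from the remaining $\bV(A)$ to $\piindec_{A_0}$.
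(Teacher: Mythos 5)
Your proposal is correct and follows essentially the same route as the paper: induction on $|\cP|$, peeling off a maximal element $A_0$, building the short exact sequence $0 \to W' \to W \to \bVindec(A_0) \to 0$, and applying the five lemma against the split product sequence. The only cosmetic deviation is that you phrase the middle exactness via the intersection identity $\bV(A_0) \cap W' = \bVdec(A_0)$, whereas the paper observes directly that $\ker(\piindec_{A_0}|_{\bV(A_0)}) = \bVdec(A_0) \subset W'$ and combines this with $W = W' + \bV(A_0)$; these are equivalent and both are fine.
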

\begin{proof}
The proof will be by induction on the cardinality of the finite poset $\cP$.  The base
case where $\cP$ consists of one element is trivial, so assume that $\cP$ has at least
two elements and that the proposition is true for all smaller posets.  Let $M \in \cP$
be a maximal element, that is, an element such that the only $A \in \cP$
with $M \preceq A$ is $A = M$.  Set $\cP' = \cP \setminus \{M\}$.  Since $M$ is maximal,
the assumption \eqref{eqn:projasm} implies that $\pi_M(\bV(A')) \subset \bVdec(M)$ for all $A' \in \cP'$, and
hence $\piindec_M(\bV(A')) = 0$ for all $A' \in \cP'$.  Letting $W'$ be the span
of the restriction of $\bV$ to $\cP'$, we deduce that $W' \subset \ker(\piindec_M)$.
Since $W = W' + \bV(M)$ and
\[\ker((\piindec_M)|_{\bV(M)}) = \ker(\bV(M) \rightarrow \bVindec(M)) = \bVdec(M) \subset W',\]
it follows that $W'$ equals the kernel of the restriction of $\piindec_M$ to $W$.  In
other words, we have a short exact sequence
\[\begin{CD}
0 @>>> W' @>>> W @>{\piindec_M}>> \bVindec(M) @>>> 0.
\end{CD}\]
This short exact sequence fits into a commutative diagram of short exact sequences
\begin{equation}
\label{eqn:decomposev}
\begin{CD}
0 @>>> W'   @>>> W    @>{\piindec_M}>> \bVindec(M) @>>> 0\\
@.     @VVV      @VVV             @VV{=}V          @.\\
0 @>>> {\displaystyle \prod_{A' \in \cP'} \bVindec(A')} @>>> {\displaystyle \prod_{A \in \cP} \bVindec(A)} @>>> \bVindec(M) @>>> 0
\end{CD}
\end{equation}
whose left and central vertical arrows are the direct products of the restrictions of the
relevant $\piindec_A$.
Since $M$ is maximal, for $A' \in \cP'$ the decomposable subspace and indecomposable part
of $\bV(A')$ is
the same whether $\bV$ is considered a linear representation of $\cP$ or of its
subposet $\cP'$.  We can thus apply our inductive hypothesis to see that the
left hand vertical arrow in \eqref{eqn:decomposev} is an isomorphism.  The five lemma
therefore implies that the central vertical arrow in \eqref{eqn:decomposev} is an
isomorphism, as desired.
\end{proof}

\subsection{The decomposition}
\label{section:decomposemain}

Let $g \geq 1$ and let $p$ be a prime.
All vector spaces in this section are finite-dimensional 
vector spaces over $\Field_p$.  Before stating our main result, we need some preliminaries.

\paragraph{Abstract vector spaces.}
Given a vector space $V$, let $\Tits(V)$ be the Tits building and 
$\St(V)$ be the Steinberg module associated to $\SL(V)$.  Thus if $n = \dim_{\Field_p}(V)$, then $\St(V) = \RH_{n-2}(\Tits(V);\Q)$.  
If $V$ is a symplectic vector space, then let $\StSep(V)$ denote
the separated subspace of $\St(V)$ as defined in \S \ref{section:maptobuilding}.  Also, define
$\StNS(V) = \St(V) / \StSep(V)$.  Finally, if $V$ is a vector space and $V_1,V_2$
are linearly independent subspaces of $V$, then we will write $V_1 \boxplus V_2$
for the internal direct sum of $V_1$ and $V_2$.

\paragraph{Unordered tensor products.}
Given vector spaces $V_1,\ldots,V_k$, the {\em unordered tensor product} of the $V_i$, denoted
$\Sym(V_1,\ldots,V_k)$, is defined as follows.  Let $\cSym(k)$ be the symmetric group on $k$ letters.  Set
\[\cO = \Set{$(W_1,\ldots,W_k)$}{there exists $\sigma \in \cSym(k)$ such that $W_i = V_{\sigma(i)}$ for $1 \leq i \leq k$}\]
and
\[\hSym(V_1,\ldots,V_k) = \bigoplus_{(W_1,\ldots,W_k) \in \cO} W_1 \otimes \cdots \otimes W_k.\]
The group $\cSym(k)$ acts on $\cO$ in the obvious way, and this induces an action of $\cSym(k)$ on
$\hSym(V_1,\ldots,V_k)$.  By definition, $\Sym(V_1,\ldots,V_k)$ is the coinvariants of the $\cSym(k)$-action on
$\hSym(V_1,\ldots,V_k)$.
As its name suggests, $\Sym(V_1,\ldots,V_k)$ is exactly like the 
ordinary tensor product, but without a distinguished ordering of the factors; in particular, we have
an equality
\[\Sym(V_1,\ldots,V_k) = \Sym(V_{\sigma(1)},\ldots,V_{\sigma(k)})\]
for all $\sigma \in \cSym(k)$.  The composition
\[V_1 \otimes \cdots \otimes V_k \hookrightarrow \hSym(V_1,\ldots,V_k) \rightarrow \Sym(V_1,\ldots,V_k)\]
is an isomorphism.  Given elements $\vec{v}_i \in V_i$
for $1 \leq i \leq k$, we will write $\Sym(\vec{v}_1,\ldots,\vec{v}_k)$ for the image
in $\Sym(V_1,\ldots,V_k)$ of $\vec{v}_1 \otimes \cdots \otimes \vec{v}_k$.

\paragraph{Symplectic splittings.}
Endow $\Field_p^{2g}$ with the standard symplectic form.
A {\em symplectic splitting} of $\Field_p^{2g}$ is an unordered set $S=\{S_1,\ldots,S_k\}$
of nonzero subspaces of $\Field_p^{2g}$ that are pairwise orthogonal to each other under the symplectic form such that
$\Field_p^{2g} = S_1 \boxplus \cdots \boxplus S_k$.
Using Lemma \ref{lemma:orthogonalsymplectic}, this implies that the symplectic form on 
$\Field_p^{2g}$ restricts to a symplectic form on each $S_i$.  Given
a symplectic splitting $S=\{S_1,\ldots,S_k\}$ of $\Field_p^{2g}$, we define
\[\St(S) = \Sym(\St(S_1),\ldots,\St(S_k))\]
and
\[\StNS(S) = \Sym(\StNS(S_1),\ldots,\StNS(S_k)).\]
There is a projection
$\St(S) \rightarrow \StNS(S)$ whose kernel is the span of the subspaces
\[\Set{$\Sym(\StSep(S_i), \St(S_1),\ldots,\widehat{\St(S_i)},\ldots,\St(S_k))$}{$1 \leq i \leq k$}\]
of $\St(S)$.  Denote this kernel by $\StSep(S)$, so we have a short exact sequence
\[0 \longrightarrow \StSep(S) \longrightarrow \St(S) \longrightarrow \StNS(S) \longrightarrow 0.\]  
Let $\Sp(S)$ be the stabilizer of $S$ in $\Sp_{2g}(\Field_p)$.  The vector spaces $\St(S)$ and $\StSep(S)$ and
$\StNS(S)$ are representations of $\Sp(S)$, and the above short exact sequence is an exact
sequence of $\Sp(S)$-representations.

\paragraph{Main theorem, statement.}
We can now state our main theorem.  
Let $\fS_g$ be the set of all symplectic splittings of $\Field_p^{2g}$ (including the $1$-element splitting $\{\Field_p^{2g}\}$).

\begin{theorem}
\label{theorem:decomposition}
Let $g \geq 1$ and let $p$ be a prime.  Then we have a vector space isomorphism
\[\St_{2g}(\Field_p) \cong \bigoplus_{S \in \fS_g} \StNS(S).\]
\end{theorem}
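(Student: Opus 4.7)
The plan is to apply Proposition \ref{proposition:linearalgebra} to the poset $\fS_g$ of symplectic splittings, ordered by refinement: $S \preceq T$ iff every block of $S$ is contained in some block of $T$, so that $\{\Field_p^{2g}\}$ is the maximum element. For each $S = \{S_1,\ldots,S_k\} \in \fS_g$, I would construct a ``product'' map
\[\mu_S\colon \St(S) = \Sym(\St(S_1),\ldots,\St(S_k)) \longrightarrow \St_{2g}(\Field_p)\]
sending $\Sym(\Apartment_{B_1},\ldots,\Apartment_{B_k})$ to the apartment class $\Apartment_B$ of the concatenated basis $B = B_1 \cup \cdots \cup B_k$ in some fixed order; the use of the unordered tensor product together with relations (2) and (3) of Theorem \ref{theorem:apartmentrelations} make this independent of the chosen ordering. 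Define $\bV(S) = \Image(\mu_S)$; concretely, it is the span of the apartment classes $\Apartment_B$ whose basis vectors each lie in some block of $S$. Because $S \preceq T$ forces $\bV(S) \subseteq \bV(T)$, this defines a linear representation of $\fS_g$ whose span is $\bV(\{\Field_p^{2g}\}) = \St_{2g}(\Field_p)$.

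The first supporting lemma (to appear in \S \ref{section:decomposeproduct}) should establish that $\mu_S$ is well-defined and carries $\StSep(S)$ precisely onto $\bVdec(S)$, thereby inducing an isomorphism $\bVindec(S) \cong \StNS(S)$; with this in hand, the conclusion of Proposition \ref{proposition:linearalgebra} will yield exactly the desired decomposition.

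The second lemma (\S \ref{section:decomposeprojection}) should produce, for each $S \in \fS_g$, a projection $\pi_S\colon \St_{2g}(\Field_p) \to \bV(S)$ satisfying $\pi_S|_{\bV(S)} = \id$. The natural candidate is built by iteratively applying the Solomon--Tits relation (part (4) of Theorem \ref{theorem:apartmentrelations}) to decompose each basis vector across the blocks of $S$: given $\Apartment_B$ with $B = (\vec{v}_1,\ldots,\vec{v}_{2g})$ and the decomposition $\vec{v}_i = \sum_{j=1}^k \vec{v}_i^{(j)}$ with $\vec{v}_i^{(j)} \in S_j$, one uses relation (4) to rewrite $\Apartment_B$ as a signed sum of apartments each of whose $i$th entry lies in a single block of $S$, processing $i = 1, 2, \ldots, 2g$ in turn. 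I expect showing that this procedure is well-defined, independent of all choices, and compatible with relations (1)--(4) of Theorem \ref{theorem:apartmentrelations} to be the main technical obstacle of the proof.

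The third lemma (\S \ref{section:decomposecrossproj}) supplies the hypothesis \eqref{eqn:projasm} of Proposition \ref{proposition:linearalgebra}: whenever $S \not\preceq T$, one must show $\pi_S(\bV(T)) \subseteq \bVdec(S)$. The intuition is that $S \not\preceq T$ means some block $S_i$ meets at least two blocks of $T$ nontrivially, so for any $\Apartment_B \in \bV(T)$ at least one basis vector of $B$ has a genuinely nontrivial decomposition across blocks of $S$; the splitting procedure above then produces only apartments adapted to a strict refinement of $S$, i.e.\ elements of $\bVdec(S)$. Granting these three lemmas, Proposition \ref{proposition:linearalgebra} yields
\[\St_{2g}(\Field_p) \;\cong\; \prod_{S \in \fS_g} \bVindec(S) \;\cong\; \bigoplus_{S \in \fS_g} \StNS(S),\]
with product and direct sum coinciding because $\fS_g$ is finite.
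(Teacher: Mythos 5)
Your scaffolding matches the paper's proof exactly: you order $\fS_g$ by refinement, define $\bV(S)$ as the span of apartment classes whose vectors lie in the blocks of $S$, note that the span is all of $\St_{2g}(\Field_p)$, and reduce to Proposition \ref{proposition:linearalgebra} via three lemmas (a product map, a projection map splitting it, and the cross-projection estimate $\pi_S(\bV(T)) \subseteq \bVdec(S)$ for $S \npreceq T$). This is precisely the structure of \S\ref{section:decomposemain} through \S\ref{section:decomposecrossproj}.

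Where you diverge is the construction of $\pi_S$, and you are right that this is the crux. Your proposal is to iteratively apply relation (4) of Theorem \ref{theorem:apartmentrelations} to ``split'' each basis vector $\vec{v}_i$ across the blocks of $S$, producing $\Apartment_B$ as a signed sum of apartments adapted to $S$. This is unlikely to go through as stated: relation (4) is a relation holding in $\St_{2g}(\Field_p)$, not a rewriting rule, so using it to \emph{define} a map requires checking that the resulting assignment is independent of all choices (the order in which indices $i$ are processed, the chosen decompositions $\vec{v}_i = \sum_j \vec{v}_i^{(j)}$, and the many rewritings of a fixed apartment class coming from relations (1)--(4)). That verification is essentially as hard as the theorem itself, and there is no obvious termination argument since the auxiliary apartments appearing after one application of (4) (such as $\Apartment_{(\vec{u},\vec{v}_1,\vec{v}_3,\ldots)}$ with both a summand $\vec{u}$ of $\vec{v}_1$ and $\vec{v}_1$ itself present) are not ``closer'' to being adapted to $S$ in any monotone sense. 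The paper sidesteps all of this by working topologically: for an \emph{ordered} splitting $\hS$ it considers the subcomplex $X$ of $\Tits_{2g}(\Field_p)$ of subspaces pinched between consecutive partial sums $T_i = S_1 \oplus \cdots \oplus S_i$, observes that $X$ is a join of the $\Tits(S_i)$ so that $\RH_{2g-k-1}(X;\Q) \cong \St(S_1) \otimes \cdots \otimes \St(S_k)$, and defines a chain map on top-dimensional chains by deleting $T_1,\ldots,T_{k-1}$ from flags that contain all of them (and sending everything else to zero). Descent to homology and the two formulas in Lemma \ref{lemma:projectionmapordered} then follow by an explicit computation with the apartment cycle $\sum_\sigma (-1)^\sigma F_\sigma$, after which one symmetrizes over orderings of $\hS$ (dividing by $k!$, using that each $\dim S_i$ is even so the reordering signs vanish) to obtain $\pi_S$. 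I recommend replacing your proposed algebraic construction of $\pi_S$ with this topological one; the rest of your plan (Lemma \ref{lemma:identifyrep}'s identification of $\bVdec(S)$ with $\StSep(S)$, and the cross-projection Lemma \ref{lemma:decomposecrossproj}) is then exactly as in the paper.
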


\begin{remark}
\label{remark:branching}
Though we will not need this, it will be clear from our proof that 
the isomorphism in Theorem \ref{theorem:decomposition} is actually an isomorphism
of $\Sp_{2g}(\Field_p)$-representations.  From this point of view, 
Theorem \ref{theorem:decomposition} can be stated in more representation-theoretic terms: letting
$U \subset \fS_g$ be a set containing a single representative of each $\Sp_{2g}(\Field_p)$-orbit, we have an isomorphism
\[\St_{2g}(\Field_p) \cong \bigoplus_{S \in \fS_g} \StNS(S) \cong \bigoplus_{S \in U} \Ind_{\Sp(S)}^{\Sp_{2g}(\Field_p)} \StNS(S).\]
of $\Sp_{2g}(\Field_p)$-representations.  We do not know if these induced representations are irreducible or not.
\end{remark}

\paragraph{Poset of symplectic splittings.}
We will prove Theorem \ref{theorem:decomposition} using Proposition \ref{proposition:linearalgebra}. 
We start by endowing $\fS_g$ with a poset structure.
Say that $S' \in \fS_g$ is a {\em refinement} of $S \in \fS_g$ if each subspace occurring in
$S$ is the direct sum of a subset of the subspaces occurring in $S'$.  This gives
a partial order on $\fS_g$ where $S' \preceq S$ when $S'$ is a refinement of $S$.

\paragraph{The representation.}
Define $\bV_g = \St_{2g}(\Field_p)$.  We now endow $\bV_g$ with the structure of a representation
of the poset $(\fS_g,\preceq)$.
If $B_1,\ldots,B_k$ are each ordered sequences
of nonzero vectors in $\Field_p^{2g}$, then let $B_1 \cdots B_k$ denote the result
of concatenating the $B_i$.  For an element $S = \{S_1,\ldots,S_k\}$ of $\fS_g$, we define
$\bV_g(S)$ to be the span in $\Sp_{2g}(\Field_p)$ of the set
\[\Set{$\Apartment_{B_1 \cdots B_k}$}{$B_i$ is an ordered sequence of $\dim(S_i)$ nonzero vectors
in $S_i$ for $1 \leq i \leq k$}.\]
Since permuting the vectors forming an apartment class only changes the apartment class by a sign, this
does not depend on the ordering of the $S_i$.  
It is clear that if $S'$ is a refinement of $S$, then $\bV_g(S') \subseteq \bV_g(S)$, so this
defines a linear representation of $\bV_g$.  Moreover, since $\bV_g(\{\Field_p^{2g}\}) = \St_{2g}(\Field_p)$,
the span of this linear representation is $\St_{2g}(\Field_p)$.

\paragraph{Product and projection maps.}
Our next goal is to prove that $\bV_g(S) \cong \St(S)$ for all $S \in \fS_g$.  This will
depend on the following two lemmas whose proofs are postponed.

\begin{lemma}
\label{lemma:productmap}
Let $S = \{S_1,\ldots,S_k\}$ be a symplectic splitting of $\Field_p^{2g}$.  There exists a 
linear map $\inc_S\colon \St(S) \rightarrow \St_{2g}(\Field_p)$ such that if $B_i$ is
an ordered sequence of $\dim(S_i)$ nonzero vectors in $S_i$ for $1 \leq i \leq k$, then
$\inc_S(\Sym(\Apartment_{B_1},\ldots,\Apartment_{B_k})) = \Apartment_{B_1 \cdots B_k}$.
\end{lemma}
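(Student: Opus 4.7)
The plan is to construct $\inc_S$ by first building a multilinear map on the factors and then descending through the symmetric coinvariants. By Theorem \ref{theorem:apartmentrelations}, each $\St(S_i)$ is presented by formal apartment symbols $\Apartment_{B_i}$ (for $B_i$ an ordered sequence of $d_i = \dim S_i$ nonzero vectors in $S_i$) modulo the four Lee--Szczarba relations. I would begin by defining a multilinear map
\[
\Phi\colon \St(S_1) \times \cdots \times \St(S_k) \longrightarrow \St_{2g}(\Field_p)
\]
on the free $\Q$-vector spaces generated by apartment symbols via $\Phi(\Apartment_{B_1},\ldots,\Apartment_{B_k}) = \Apartment_{B_1 \cdots B_k}$, and then verify that this map respects the four Lee--Szczarba relations in each factor so that it descends to the claimed multilinear map on $\St(S_1) \times \cdots \times \St(S_k)$.

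Relations (1)--(3) are straightforward: if $B_i$ fails to be a basis for $S_i$ then the concatenation fails to be a basis for $\Field_p^{2g}$ (since $\Field_p^{2g} = S_1 \boxplus \cdots \boxplus S_k$); permuting within $B_i$ induces a permutation of a block in the concatenation with matching sign; and scaling vectors in $B_i$ is a scaling of vectors in the concatenation. The main obstacle is relation (4), the boundary relation. For fixed $i$ and a sequence $C_i = (\vec{w}_1,\ldots,\vec{w}_{d_i+1})$ of nonzero vectors in $S_i$, I need to show that $\sum_{j=1}^{d_i+1} (-1)^j \Apartment_{B_1 \cdots (C_i)_j \cdots B_k} = 0$ in $\St_{2g}(\Field_p)$. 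The idea is to apply relation (4) of Theorem \ref{theorem:apartmentrelations} in $\St_{2g}(\Field_p)$ to the full concatenation $B_1 \cdots B_{i-1} C_i B_{i+1} \cdots B_k$, which has exactly $2g+1$ nonzero vectors. Among the $2g+1$ terms in the resulting alternating sum, any term obtained by deleting a vector from some $B_l$ with $l \neq i$ has only $d_l - 1$ vectors in $S_l$, so (since the $S_j$ are independent) the resulting sequence fails to span $\Field_p^{2g}$ and its apartment class vanishes by relation (1). The surviving terms are precisely those obtained by deleting a vector from the $C_i$-block, and extracting the uniform sign $(-1)^{d_1 + \cdots + d_{i-1}}$ coming from the position offset yields exactly the desired relation up to a global sign.

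Having verified multilinearity, $\Phi$ extends to a linear map $\St(S_1) \otimes \cdots \otimes \St(S_k) \to \St_{2g}(\Field_p)$, and the analogous construction on every reordering $(W_1,\ldots,W_k)$ of the $S_i$ assembles to a linear map $\hSym(\St(S_1),\ldots,\St(S_k)) \to \St_{2g}(\Field_p)$. To descend to the coinvariants $\Sym(\St(S_1),\ldots,\St(S_k)) = \St(S)$, I must verify equivariance under $\cSym(k)$, i.e.\ that swapping two factors does not change the image. For an adjacent swap of $B_i$ and $B_{i+1}$, the concatenation $B_1 \cdots B_i B_{i+1} \cdots B_k$ changes to $B_1 \cdots B_{i+1} B_i \cdots B_k$, and by relation (2) the apartment class changes by $(-1)^{d_i d_{i+1}}$. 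The key observation is that each $S_j$ is a symplectic subspace by Lemma \ref{lemma:orthogonalsymplectic}, so $d_j$ is even, making $(-1)^{d_i d_{i+1}} = 1$. Thus the map descends to the unordered tensor product and gives the desired $\inc_S\colon \St(S) \to \St_{2g}(\Field_p)$ with the claimed formula on apartment classes.
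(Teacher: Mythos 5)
Your proposal is correct and follows essentially the same route as the paper's proof: both use the Lee--Szczarba presentation, verify that relations (1)--(3) are immediate, handle relation (4) by embedding it in the ambient relation for the concatenation $B_1\cdots B_{i-1}C_i B_{i+1}\cdots B_k$ and observing that deletions from blocks other than the $i$-th yield non-spanning sequences (hence vanish by relation (1)), and use evenness of $\dim(S_i)$ to get symmetry under reordering the factors. The only cosmetic difference is that the paper checks relation (4) explicitly only for the first factor (where the position-offset sign is trivial), while you track the general offset sign $(-1)^{d_1+\cdots+d_{i-1}}$; these are the same argument.
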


\noindent
The maps $\inc_S$ will be called {\em product maps}.

\begin{lemma}
\label{lemma:projectionmapunordered}
Let $S = \{S_1,\ldots,S_k\}$ be a symplectic splitting of $\Field_p^{2g}$.  There
exists a linear map $\pi_S\colon \St_{2g}(\Field_p) \rightarrow \St(S)$ such that if
$B_i$ is an ordered sequence of $\dim(S_i)$ nonzero vectors in $S_i$ for $1 \leq i \leq k$, then
$\pi_S(\Apartment_{B_1 \cdots B_k}) = \Sym(\Apartment_{B_1},\ldots,\Apartment_{B_k})$.
\end{lemma}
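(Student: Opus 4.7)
The plan is to construct $\pi_S$ from the Lee--Szczarba presentation of $\St_{2g}(\Field_p)$ by an explicit expansion of each apartment along the splitting. Given an ordered basis $B=(\vec{v}_1,\ldots,\vec{v}_{2g})$ of $\Field_p^{2g}$, decompose each vector as $\vec{v}_j = \sum_{i=1}^{k}\vec{v}_j^{(i)}$ with $\vec{v}_j^{(i)} \in S_i$. For each coloring $f\colon \{1,\ldots,2g\}\to\{1,\ldots,k\}$ with $|f^{-1}(i)|=\dim(S_i)$ for every $i$, let $B_i(f)$ be the ordered sequence $(\vec{v}_j^{(i)})_{j\in f^{-1}(i)}$ listed in increasing order of $j$, and let $\epsilon(f)\in\{\pm 1\}$ be the sign of the permutation that sorts $\{1,\ldots,2g\}$ lexicographically by $(f(j),j)$. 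I would set
\[
\pi_S(\Apartment_B) \;=\; \sum_{f}\epsilon(f)\,\Sym\bigl(\Apartment_{B_1(f)},\ldots,\Apartment_{B_k(f)}\bigr)
\]
for basis $B$, and declare $\pi_S(\Apartment_B)=0$ for non-basis $B$; summands in which some $B_i(f)$ fails to be a basis of $S_i$ contribute zero by the first relation in $\St(S_i)$.

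The first Lee--Szczarba relation of Theorem \ref{theorem:apartmentrelations} is then trivially satisfied by this definition. The scaling relation is immediate: scaling $\vec{v}_j$ by $c\in\Field_p^{\times}$ scales exactly one factor $\vec{v}_j^{(f(j))}$ in each summand, and the scalar is absorbed by the scaling relation in $\St(S_{f(j)})$. The swap relation for an adjacent transposition of indices $j,j+1$ breaks into two cases: if $f(j)=f(j+1)$ the sign is produced by an adjacent swap inside one factor $\Apartment_{B_{f(j)}(f)}$; if $f(j)\neq f(j+1)$ the transposition modifies the coloring and the sorting sign $\epsilon$ flips by precisely $-1$ while the symmetric tensor itself is unchanged.

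The main obstacle is the fourth (deletion) relation. For a sequence $C=(\vec{w}_1,\ldots,\vec{w}_{2g+1})$, I would expand
\[
\sum_{j=1}^{2g+1}(-1)^j\pi_S(\Apartment_{C_j})
\]
and regroup its terms according to an ``extended coloring'' $\tilde{f}\colon \{1,\ldots,2g+1\}\to\{1,\ldots,k\}$ of the full sequence $C$. A deletion of index $j$ contributes to the group indexed by $\tilde{f}$ precisely when $\tilde{f}$ has exactly one color $i_0$ with $|\tilde{f}^{-1}(i_0)|=\dim(S_{i_0})+1$, every other color at its required size, and $j\in\tilde{f}^{-1}(i_0)$. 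After careful sign accounting, the inner sum over such $j$ reassembles to the fourth Lee--Szczarba relation inside $\St(S_{i_0})$ applied to the sequence of projections $(\vec{w}_j^{(i_0)})_{j\in\tilde{f}^{-1}(i_0)}$, and hence vanishes in $\St(S_{i_0})$. Extended colorings not of this form contribute zero since no single deletion can bring their color counts to the required size, and in the cases where some $C_j$ itself is non-basis (so $\pi_S(\Apartment_{C_j})=0$ by fiat) the corresponding internal sequence in $\St(S_{i_0})$ is not a basis and vanishes by its own first relation.

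The evaluation on $\Apartment_{B_1\cdots B_k}$ with each $B_i\subset S_i$ is then immediate: the projection of any vector in $B_i$ to $S_{i'}$ with $i'\neq i$ vanishes, so only the ``natural'' coloring assigning color $i$ to the indices of $B_i$ produces a nonzero summand, and for that coloring the sorting permutation is the identity so $\epsilon(f)=1$.
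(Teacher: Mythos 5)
Your plan is genuinely different from the paper's: the paper constructs $\pi_S$ topologically, as a chain-level map $\RC_{2g-2}(\Tits_{2g}(\Field_p);\Q)\to\RC_{2g-k-1}(X;\Q)$ that remembers only those flags containing the partial sums $T_1,\ldots,T_{k-1}$ of the splitting (Lemma~\ref{lemma:projectionmapordered}), and then averages over orderings of $S$. Your approach instead defines $\pi_S$ algebraically on the Lee--Szczarba presentation. The formula you write is reasonable, and your verifications of the second and third relations and of the evaluation formula on $\Apartment_{B_1\cdots B_k}$ are fine. However, there is a genuine gap in your handling of the first and fourth relations together.

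The problem is in the sentence asserting that when some $C_j$ is non-basis, ``the corresponding internal sequence in $\St(S_{i_0})$ is not a basis and vanishes by its own first relation.'' This is false. Take $g=2$, $S_1=\langle\vec{e}_1,\vec{e}_2\rangle$, $S_2=\langle\vec{e}_3,\vec{e}_4\rangle$, and consider the non-basis sequence $B=(\vec{e}_1+\vec{e}_3,\ \vec{e}_2+\vec{e}_3,\ \vec{e}_1+\vec{e}_4,\ \vec{e}_2+\vec{e}_4)$ (the first minus the third equals the second minus the fourth). For the coloring $f$ with $f^{-1}(1)=\{1,4\}$ and $f^{-1}(2)=\{2,3\}$, both projected sequences $B_1(f)=(\vec{e}_1,\vec{e}_2)$ and $B_2(f)=(\vec{e}_3,\vec{e}_4)$ are bases, so that summand is nonzero. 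In other words, for a non-basis $B$ the expression $\sum_f\epsilon(f)\,\Sym(\Apartment_{B_1(f)},\ldots,\Apartment_{B_k(f)})$ is not term-by-term zero. Since you define $\pi_S(\Apartment_B)=0$ for non-basis $B$ by fiat, but your regrouping computation verifies the fourth relation only for the formula applied uniformly to all $C_j$ (basis or not), there is a discrepancy: you need to show separately that the formula value on any non-basis apartment actually vanishes (equivalently, that $\sum_{j:\,C_j\ \text{non-basis}}(-1)^j\sum_{f_j}\epsilon(f_j)\Sym(\ldots)=0$). This is plausible---it is a Laplace-expansion-flavored statement---but it does not follow from the first relation in the factors $\St(S_i)$, and you have not proved it. Until that vanishing is established, $\pi_S$ is not well-defined on $\St_{2g}(\Field_p)$. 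The paper avoids this issue entirely because its $\pi_{\hS}$ is defined on all top-dimensional chains of the Tits building (not just apartment classes) and is shown to send cycles to cycles by a direct boundary computation.
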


\noindent
The maps $\pi_S$ will be called {\em projection maps}.  
The proofs of Lemmas \ref{lemma:productmap} and \ref{lemma:projectionmapunordered} can be found in 
\S \ref{section:decomposeproduct} and \S \ref{section:decomposeprojection}, respectively.

\paragraph{Identifying the representation.}
We now prove the following.

\begin{lemma}
\label{lemma:identifyrep}
Let the notation be as above.  The following then hold for all $S \in \fS_g$.
\begin{compactenum}
\item The product map $\inc_S\colon \St(S) \rightarrow \bV_g$ is an injection with image $\bV_g(S)$.
\item Identifying $\bV_g(S)$ with $\St(S)$ via $\inc_S$, the following hold.
\begin{compactenum}
\item The projection map $\pi_S\colon \bV_g \rightarrow \St(S)$
restricts to the identity map from $\bV_g(S) = \St(S)$ to $\St(S)$.
\item We have $\bVdec_g(S) = \StSep(S) \subset \St(S)$.
\end{compactenum}
\end{compactenum}
\end{lemma}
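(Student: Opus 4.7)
The plan is to prove the three assertions in order, using Lemmas \ref{lemma:productmap} and \ref{lemma:projectionmapunordered} as black boxes and reducing everything to statements about the generating apartment classes.

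For part (1), the image of $\inc_S$ contains all elements of the form $\Apartment_{B_1 \cdots B_k}$ with $B_i$ an ordered sequence of $\dim(S_i)$ nonzero vectors in $S_i$, and these are precisely the generators of $\bV_g(S)$ by definition; conversely, $\inc_S$ is defined by its values on the generating set $\Set{$\Sym(\Apartment_{B_1},\ldots,\Apartment_{B_k})$}{$B_i \subset S_i$}$ of $\St(S)$, so its image equals $\bV_g(S)$. To get injectivity, I would compose with $\pi_S$: on the generators of $\St(S)$, the composition $\pi_S \circ \inc_S$ sends $\Sym(\Apartment_{B_1},\ldots,\Apartment_{B_k})$ to itself by the defining formulas of Lemmas \ref{lemma:productmap} and \ref{lemma:projectionmapunordered}. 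Since these generators span $\St(S)$, the composition is the identity, so $\inc_S$ is injective. Part (2a) then follows for free, since $\pi_S|_{\bV_g(S)}$ corresponds under the identification to $\pi_S \circ \inc_S = \id_{\St(S)}$.

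For part (2b), I would prove the two inclusions separately by passing to the generating apartment classes on each side. For $\bVdec_g(S) \subseteq \StSep(S)$, take a proper refinement $S' \prec S$ and write $S' = \{T_1,\ldots,T_m\}$. Each $S_i$ is an internal direct sum $\boxplus_{j \in J_i} T_j$, and since the $T_j$ are pairwise symplectically orthogonal in $\Field_p^{2g}$, Lemma \ref{lemma:orthogonalsymplectic} makes $\{T_j\}_{j \in J_i}$ a symplectic splitting of $S_i$. A generator $\Apartment_{C_1 \cdots C_m}$ of $\bV_g(S')$, after regrouping the $C_j$ according to the $J_i$ into new sequences $B_i$, becomes (up to sign) $\Apartment_{B_1 \cdots B_k}$, which under $\inc_S^{-1}$ equals $\pm \Sym(\Apartment_{B_1},\ldots,\Apartment_{B_k})$. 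Because $S'$ is a proper refinement, at least one $|J_i| \geq 2$, so the corresponding $\Apartment_{B_i}$ is a separated apartment class in $\St(S_i)$, placing this element in $\StSep(S)$.

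For the reverse inclusion $\StSep(S) \subseteq \bVdec_g(S)$, take a generator of the form $\Sym(x_1, \Apartment_{B_2},\ldots,\Apartment_{B_k})$ with $x_1 \in \StSep(S_1)$; by linearity I may assume $x_1 = \Apartment_{B_1}$ is itself a separated apartment in $\St(S_1)$, so there is a nontrivial orthogonal splitting $S_1 = U_1 \boxplus U_2$ with every vector of $B_1$ lying in $U_1$ or $U_2$. Then $S' = \{U_1, U_2, S_2, \ldots, S_k\}$ is a proper refinement of $S$, and after reordering the concatenated sequence $B_1 \cdots B_k$ (only changing the apartment by a sign via Theorem \ref{theorem:apartmentrelations}) we see that $\inc_S(\Sym(\Apartment_{B_1},\ldots,\Apartment_{B_k})) = \pm\Apartment_{B_1 \cdots B_k}$ lies in $\bV_g(S')$, hence in $\bVdec_g(S)$. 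The symmetric-group symmetry in the definition of $\Sym$ reduces the case of general $x_i \in \StSep(S_i)$ to the $i=1$ case just treated. The main technical nuisance is being careful about sign conventions when reordering vectors across the concatenation and verifying that a refinement of a single $S_i$ really does give an element of $\fS_g$ refining $S$, but both are handled by direct application of Lemma \ref{lemma:orthogonalsymplectic} and Theorem \ref{theorem:apartmentrelations}(2).
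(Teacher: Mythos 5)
Your proof is correct and takes essentially the same approach as the paper: part (1) and (2a) follow from the defining formulas of Lemmas \ref{lemma:productmap} and \ref{lemma:projectionmapunordered} applied to generators, and (2b) is established by matching the generating sets of $\bVdec_g(S)$ and $\StSep(S)$ under the product maps, with the key observation that a proper refinement of $S$ forces some $\Apartment_{B_i}$ to be a separated apartment and conversely. The paper simply asserts this matching of generating sets in one sentence, whereas you spell out the two inclusions explicitly, but the content is the same.
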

\begin{proof}
Write $S = \{S_1,\ldots,S_k\}$.
The formula in the statement of Lemma \ref{lemma:productmap} says that $\inc_S$ takes generators of $\St(S)$ to
generators of $\bV_g(S)$, so the image of $\inc_S$ is $\St(S)$.  If $B_i$
is an ordered sequence of $\dim(S_i)$ nonzero vectors in $S_i$ for $1 \leq i \leq k$, then 
\[\pi_S(\inc_S(\Sym(\Apartment_{B_1},\ldots,\Apartment_{B_k}))) = \pi_S(\Apartment_{B_1 \cdots B_k}) = \Sym(\Apartment_{B_1},\ldots,\Apartment_{B_k}).\]
It follows that $\pi_S \circ \inc_S$ is the identity.  Conclusions 1 and 2a follow.
To see that $\bVdec_g(S) = \StSep(S)$ (Conclusion 2b), observe that by definition
$\StSep(S)$ is generated by the set $\Lambda$ of all $\Sym(\Apartment_{B_1},\ldots,\Apartment_{B_k})$, where
each $B_i$ is an ordered sequence of $\dim(S_i)$ nonzero vectors in $S_i$ and where at least one of the $\Apartment_{B_i}$
lies in $\StSep(S_i)$.  As $S'$ ranges over all proper refinements of $S$, these $\Lambda$ are also the 
images under $\inc_{S'}$ of the generators of $\St(S')$.  The equality $\bVdec_g(S) = \StSep(S)$ follows.
\end{proof}

\paragraph{The key technical lemma.}
The following lemma, whose proof is postponed, is perhaps the key technical lemma that goes into proving
Theorem \ref{theorem:decomposition}.

\begin{lemma}
\label{lemma:decomposecrossproj}
Let $S,S' \in \fS_g$ be such that $S \npreceq S'$.  Then $\pi_S(\bV_g(S')) \subset \StSep(S)$.
\end{lemma}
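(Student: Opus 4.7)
The plan is to analyze the image of a generator $\Apartment_B\in\bV_g(S')$ under $\pi_S$ by making the map explicit. Write $S=\{S_1,\ldots,S_k\}$ and $S'=\{S'_1,\ldots,S'_m\}$, and consider a generator $\Apartment_B$ with $B=B'_1\cdots B'_m$ and each $B'_j$ an ordered basis of $S'_j$. Using the construction of $\pi_S$ in \S\ref{section:decomposeprojection}, I would extract a straightening formula: decomposing each vector $\vec u$ of $B$ as $\vec u=\vec u^{(1)}+\cdots+\vec u^{(k)}$ with $\vec u^{(l)}\in S_l$ and iterating relation (4) of Theorem~\ref{theorem:apartmentrelations}, one should obtain
$$\pi_S(\Apartment_B)=\sum_{\phi}\epsilon(\phi)\,\Sym\bigl(\Apartment_{B^\phi_1},\ldots,\Apartment_{B^\phi_k}\bigr),$$
summed over allocations $\phi\colon\{1,\ldots,2g\}\to\{1,\ldots,k\}$ with $|\phi^{-1}(l)|=\dim S_l$, where $B^\phi_l=(\vec u_i^{(l)})_{i\in\phi^{-1}(l)}$ and $\epsilon(\phi)$ is the sign of the sorting permutation.

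The hypothesis $S\npreceq S'$ furnishes an index $j_0$ for which $S'_{j_0}$ is not a direct sum of subspaces from $\{S_l\}$, so the subspaces $S'_{j_0}\cap S_l$ fail to span $S'_{j_0}$. I would argue that this forces, in every nonzero term of the straightening sum, some $l$ for which $B^\phi_l$ combines $S_l$-components originating in $B'_{j_0}$ with $S_l$-components originating in some $B'_{j_1}$, $j_1\neq j_0$ -- otherwise the $B'_{j_0}$-block would have to fit entirely inside $\bigoplus_l (S'_{j_0}\cap S_l)$, contradicting the failure to span.

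The main obstacle is to promote this combinatorial "cross-origin" feature into actual membership in $\StSep(S)$, since direct vector-level orthogonality fails: $S'_{j_0}\perp S'_{j_1}$ in $\Field_p^{2g}$ only yields $\sum_l\omega(\vec u^{(l)},\vec w^{(l)})=0$ for $\vec u\in S'_{j_0}$ and $\vec w\in S'_{j_1}$, not per-$l$ vanishing. The approach I would pursue is to group the straightening sum according to its "partition data" (how many vectors of each $B'_j$ land in each $B^\phi_l$) and use the apartment relations (1)--(4) of Theorem~\ref{theorem:apartmentrelations} within each $\St(S_l)$ to collapse each subsum to a Sym-tensor whose factors respect the orthogonal symplectic structure of the $S'_j$'s. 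In small examples such as $g=2$, $k=m=2$ with $\StSep(S)=0$, one can verify by direct calculation that the signs and rescalings in the straightening formula produce complete cancellation; the general argument should amount to a systematic extension of this combinatorial bookkeeping, propagated by an induction on the partition data, and making this precise will be the technical heart of the proof.
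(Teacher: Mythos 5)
Your proposed ``straightening formula'' does not actually describe the map $\pi_S$ constructed in \S\ref{section:decomposeprojection}. By Lemma~\ref{lemma:projectionmapordered}, for a fixed ordering $\hS$ the map $\pi_{\hS}$ sends $\Apartment_B$ either to $0$ (when no reordering of $B$ is weakly compatible with $\hS$) or to the \emph{single} term $\Sym(\Apartment_{\oB_1},\ldots,\Apartment_{\oB_k})$ built from the orthogonal projections of the blocks of one weakly compatible factorization; $\pi_S$ is then just the symmetrized average over the $k!$ orderings of $S$. There is no sum over all allocations $\phi$ with signs $\epsilon(\phi)$ arising from repeated use of relation~(4). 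So the object you plan to analyze is not the one that needs to be shown to lie in $\StSep(S)$, and the bookkeeping you describe starts from the wrong expression.

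More seriously, the step you flag as ``the technical heart'' -- promoting cross-origin mixing to genuine separation, despite the fact that orthogonality of $S'_{j_0}$ and $S'_{j_1}$ only gives $\sum_l \omega(\vec u^{(l)}, \vec w^{(l)}) = 0$ rather than per-$l$ vanishing -- is exactly the part of the argument you have not found, and the ``group and collapse'' plan does not fill it. The paper's proof resolves this not by manipulating the straightening sum but by a geometric fact about orthogonal projections: if $B = B_1\cdots B_k$ is a weakly compatible factorization with each block further partitioned as $B_i = B_{i1}\cdots B_{i\ell}$ according to which $S'_j$ the vectors lie in, and $T_{ij}$ denotes the span of $B_{ij}$, then the orthogonal projection $\oT_{ij}$ of $T_{ij}$ into $S_i$ still lies inside $S'_j$ (this is the Claim in the paper's proof, and it uses weak compatibility to see that the earlier blocks already supply pairwise-orthogonal symplectic subspaces $U_{j'}\subset S'_{j'}$ spanning $S_1\boxplus\cdots\boxplus S_{i-1}$). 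Consequently the nonzero $\oT_{ij}$ form a symplectic splitting $\oT$ that refines both $S$ and $S'$; since $S\npreceq S'$, this refinement is proper, so at least one $\Apartment_{\oB_i}$ lands in $\StSep(S_i)$ and the Sym-tensor is separated. That single geometric observation replaces your entire proposed inductive bookkeeping, and without an analogue of it your argument does not close.
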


\noindent
The proof of Lemma \ref{lemma:decomposecrossproj} can be found in \S \ref{section:decomposecrossproj}.

\paragraph{Putting it all together.}
All the pieces are now in place to prove Theorem \ref{theorem:decomposition} (modulo the postponed Lemmas
\ref{lemma:productmap}, \ref{lemma:projectionmapunordered}, and \ref{lemma:decomposecrossproj}, which are proved
in the next three subsections).

\begin{proof}[Proof of Theorem \ref{theorem:decomposition}]
Let $\bV_g$ be the linear representation of $(\fS_g,\preceq)$ discussed above.
Using Conclusion 1 of Lemma \ref{lemma:identifyrep}, we will identify $\bV_g(S)$ with $\St(S)$ via the product
map $\inc_S$ for all $S \in \fS_g$.  Conclusion 2a of that lemma says that we can define a projection system
$\pi$ for $\bV_g$ by letting $\pi_S$ be the projection map for all $S \in \fS_g$.  Lemma \ref{lemma:decomposecrossproj}
says that $\bV_g$ together with the projection system $\pi$ satisfies the hypotheses of Proposition 
\ref{proposition:linearalgebra}.  Since the span of $\bV_g$ is all of $\bV_g = \St_{2g}(\Field_p)$, that
proposition implies that the direct product
\[\St_{2g}(\Field_p) \longrightarrow \prod_{S \in \fS_g} \bVindec_g(S)\]
of the maps
\[\piindec_S\colon \St_{2g}(\Field_p) = \bV_g \rightarrow \bVindec_g(S)\]
is an isomorphism.  For $S \in \fS_g$, Conclusion 2b of Lemma \ref{lemma:identifyrep} says
that $\bVdec_g(S) = \StSep(S) \subset \St(S)$, so $\bVindec_g(S) = \St(S)/\StSep(S) = \StNS(S)$.  The theorem follows.
\end{proof}

\subsection{Product maps: the proof of Lemma \ref{lemma:productmap}}
\label{section:decomposeproduct}

This section is devoted to the proof of Lemma \ref{lemma:productmap}.

\begin{proof}[Proof of Lemma \ref{lemma:productmap}]
We first recall the statement.  Let $p$ be a prime, let $g \geq 1$, and 
let $S = \{S_1,\ldots,S_k\}$ be a symplectic splitting of $\Field_p^{2g}$.  We must prove that there exists a
linear map $\inc_S\colon \St(S) \rightarrow \St_{2g}(\Field_p)$ such that if $B_i$ is
an ordered sequence of $\dim(S_i)$ nonzero vectors in $S_i$ for $1 \leq i \leq k$, then
$\inc_S(\Sym(\Apartment_{B_1},\ldots,\Apartment_{B_k})) = \Apartment_{B_1 \cdots B_k}$.
Set $n_i = \dim_{\Field_p}(S_i)$, so $n_1+\cdots+n_k = 2g$.  Since each $S_i$ is a symplectic subspace
of $\Field_p^{2g}$, each $n_i$ is even.  Since permuting the vectors forming an apartment class only changes
the apartment class by a sign, this implies that the
indicated formula for $\inc_S$ does not depend on the ordering of the $S_i$, and is thus well-defined.

Theorem \ref{theorem:apartmentrelations} gives presentations for the $\St(S_i)$ and hence for $\St(S)$.  We must
check that the indicated formula take the relations for $\St(S)$ to relations for $\St_{2g}(\Field_p)$.  The only
nonobvious relation is the one coming from the fourth relation in Theorem \ref{theorem:apartmentrelations}, which goes
as follows.  To simplify our notation, we will check this relation in the $\St(S_1)$-factor of $\St(S)$; the other
verifications are similar.  Let $C$ be an ordered sequence of $n_1+1$ nonzero vectors in $S_1$, and for $2 \leq i \leq k$
let $B_i$ be an ordered sequence of $n_i$ nonzero vectors in $S_i$.  For $1 \leq i \leq n_1+1$, let $C_i$ be the result
of deleting the $i^{\text{th}}$ vector from $C$.  We then get a relation
\[\sum_{i=1}^{n_1+1} (-1)^i \Sym(\Apartment_{C_i}, \Apartment_{B_2},\ldots,\Apartment_{B_k}) = 0\]
in $\St(S)$.  We must check that $\inc_S$ takes this to a relation in $\St_{2g}(\Field_p)$, i.e.\ we must check that
\begin{equation}
\label{eqn:producttocheck}
\sum_{i=1}^{n_1+1} (-1)^i \Apartment_{C_i B_2 \cdots B_k} = 0.
\end{equation}
Let $D = C B_2 \cdots B_k$, so $D$ is a sequence of $2g+1$ nonzero vectors
in $\Field_p^{2g}$.  For $1 \leq i \leq 2g+1$, let $D_i$ be the result of deleting the $i^{\text{th}}$ vector from $D$.  We thus
have $D_i = C_i B_2 \cdots B_k$ for $1 \leq i \leq n_1+1$.  The fourth relation in Theorem \ref{theorem:apartmentrelations}
says that
\begin{equation}
\label{eqn:productalmostthere}
0 = \sum_{i=1}^{2g+1} (-1)^i \Apartment_{D_i} = \left(\sum_{i=1}^{n_1+1} (-1)^i \Apartment_{C_i B_2 \cdots B_k}\right) + \left(\sum_{i=n_1+2}^{2g+1} (-1)^i \Apartment_{D_i}\right).
\end{equation}
For $n_1+2 \leq i \leq 2g+1$ the vectors in $D_i$ do not span $\Field_p^{2g}$, so the first relation in
Theorem \ref{theorem:apartmentrelations} implies that $\Apartment_{D_i} = 0$.  Plugging this into
\eqref{eqn:productalmostthere}, we obtain \eqref{eqn:producttocheck}, as desired.
\end{proof}

\subsection{Projection maps: the proof of Lemma \ref{lemma:projectionmapunordered}}
\label{section:decomposeprojection}

Let $p$ be a prime and let $g \geq 1$.
This section is devoted to the proof of Lemma \ref{lemma:projectionmapunordered}.  This requires
some preliminary work.

An {\em ordered symplectic splitting} of $\Field_p^{2g}$
is an ordered sequence $\hS = (S_1,\ldots,S_k)$ of distinct subspaces of $\Field_p^{2g}$ such that
$S := \{S_1,\ldots,S_k\}$ is a symplectic splitting of $\Field_p^{2g}$.  We will say that $\hS$ is an
{\em ordering} of $S$.  Next, if $\hS = (S_1,\ldots,S_k)$ is an ordered symplectic splitting of $\Field_p^{2g}$, then
an ordered sequence $B$ of $2g$ vectors in $\Field_p^{2g}$ is {\em weakly compatible} with $\hS$ if we can write
$B = B_1 \cdots B_k$ such that $B_1 \cdots B_i$ is a basis of $S_1 \boxplus \cdots \boxplus S_i$
for all $1 \leq i \leq k$; here recall that $\boxplus$ denotes the internal direct sum.  
In that case, we will say that $B_1 \cdots B_k$ is the {\em factorization} of $B$ associated
to $S$.  Also, the {\em projection} of $B$ to $\hS$ is the sequence $(\oB_1,\ldots,\oB_k)$, where for
$1 \leq i \leq k$ the sequence $\oB_i$ is obtained by applying the orthogonal projection
$\Field_p^{2g} \rightarrow S_i$ 
to each vector in $B_i$.

\begin{lemma}
\label{lemma:projectionmapordered}
Let $\hS = (S_1,\ldots,S_k)$ be an ordered symplectic splitting of $\Field_p^{2g}$.
Then there is a linear map $\proj_{\hS}\colon \St_{2g}(\Field_p) \rightarrow \St(S)$
such that the following properties hold.  Let $B$ be an ordered sequence of $2g$ nonzero
vectors in $\Field_p^{2g}$.
\begin{compactitem}
\item If $B$ is weakly compatible with $\hS$ and $(\oB_1,\ldots,\oB_k)$ is its projection to $\hS$, then
\[\proj_{\hS}(\Apartment_B) = \Sym(\Apartment_{\oB_1},\ldots,\Apartment_{\oB_k}).\]
\item If no reordering of the vectors in $B$ is weakly compatible with $\hS$, then
$\proj_{\hS}(\Apartment_B) = 0$.
\end{compactitem}
\end{lemma}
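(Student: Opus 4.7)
The plan is to define $\proj_{\hS}$ on apartment-class generators of $\St_{2g}(\Field_p)$ using the presentation from Theorem \ref{theorem:apartmentrelations} and then to verify that each of the four relations listed there is sent to zero. Given an ordered sequence $B$ of $2g$ nonzero vectors in $\Field_p^{2g}$, I would set $\proj_{\hS}(\Apartment_B) := 0$ unless some reordering $\sigma(B)$ of $B$ is weakly compatible with $\hS$, and in the latter case I would set
\[
\proj_{\hS}(\Apartment_B) := (-1)^{|\sigma|} \Sym(\Apartment_{\oB_1}, \ldots, \Apartment_{\oB_k}),
\]
where $(\oB_1, \ldots, \oB_k)$ is the projection of $\sigma(B)$ to $\hS$. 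The two bullets of the lemma are then encoded directly: the first corresponds to $\sigma = \id$, and the second is the default value $0$.

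Well-definedness comes from the observation that whenever some reordering of $B$ is weakly compatible with $\hS$, the partition of $B$ into blocks $B_i$ is uniquely forced: each $b \in B$ has a well-defined smallest index $\lambda(b)$ with $b \in S_1 \boxplus \cdots \boxplus S_{\lambda(b)}$, and a dimension count shows that $B_i$ must be exactly $\{b \in B : \lambda(b) = i\}$. Hence two valid reorderings $\sigma_1(B), \sigma_2(B)$ differ by a permutation $\tau$ preserving each block, and the sign $(-1)^{|\tau|}$ is absorbed by applying relation 2 of Theorem \ref{theorem:apartmentrelations} inside each $\St(S_i)$ to the corresponding $\Apartment_{\oB_i}$. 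Relation 1 of Theorem \ref{theorem:apartmentrelations} (vanishing on non-bases) is built into the definition, relation 2 is immediate from the well-definedness discussion, and relation 3 (scaling) holds because a coordinate scaling of $B$ induces the same scaling on the corresponding vector of some $\oB_i$, absorbed by relation 3 inside $\St(S_i)$.

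The substantive content is the chain relation. Given $C = (\vec{w}_1, \ldots, \vec{w}_{2g+1})$, I must show that $\sum_{j=1}^{2g+1} (-1)^j \proj_{\hS}(\Apartment_{C_j}) = 0$. Setting $c_i := |\{j : \lambda(\vec{w}_j) = i\}|$ and $d_i := \dim S_i$, a short counting argument shows that $\proj_{\hS}(\Apartment_{C_j}) \neq 0$ is possible only when there is a single index $\ell$ with $c_\ell = d_\ell + 1$ and $c_i = d_i$ for all $i \neq \ell$, and only for $j$ in the $(d_\ell + 1)$-element set $J := \{j : \lambda(\vec{w}_j) = \ell\}$; for $j \notin J$, removing $\vec{w}_j$ makes some $\lambda$-count too small and no weakly compatible reordering of $C_j$ exists. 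For $j \in J$, the projected blocks $\oB_i^{(j)}$ with $i \neq \ell$ are independent of $j$, while the $\ell$-th block is obtained by deleting the $S_\ell$-projection of $\vec{w}_j$ from a fixed $(d_\ell + 1)$-tuple $\widetilde C$ of vectors in $S_\ell$.

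The main obstacle is the sign bookkeeping in the chain relation. I would first use relation 2 to reduce to the case where $C$ itself is sorted so that $\lambda(\vec{w}_1) \leq \cdots \leq \lambda(\vec{w}_{2g+1})$, pulling out a uniform sign across all terms of the relation. In the sorted case, $J$ consists of the consecutive indices $\{M+1, \ldots, M+d_\ell+1\}$ with $M := d_1 + \cdots + d_{\ell-1}$, each $\sigma_j$ for $j \in J$ can be taken to be the identity, and the nonzero contributions to the alternating sum factor through the multilinearity of $\Sym$ as
\[
(-1)^M \Sym\bigl(\ldots, \textstyle\sum_{r=1}^{d_\ell + 1} (-1)^r \Apartment_{\widetilde C_r}, \ldots\bigr),
\]
where $\widetilde C_r$ is $\widetilde C$ with its $r$-th entry deleted. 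The inner sum is precisely relation 4 of Theorem \ref{theorem:apartmentrelations} applied inside $\St(S_\ell)$, and therefore vanishes, completing the verification.
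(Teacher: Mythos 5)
Your proposal is correct, but it takes a genuinely different route from the paper. You work directly with the Lee--Szczarba presentation of $\St_{2g}(\Field_p)$ (Theorem~\ref{theorem:apartmentrelations}), defining $\proj_{\hS}$ on the formal generators $\Apartment_B$ and checking that each of the four relations is killed, with the chain relation handled by reduction to sorted $C$, isolation of the block $\ell$ where the surplus count sits, and an appeal to multilinearity of $\Sym$ to collapse the alternating sum to the chain relation inside $\St(S_\ell)$. The paper instead builds the map topologically: it identifies the full subcomplex $X$ of $\Tits_{2g}(\Field_p)$ whose vertices lie strictly between consecutive terms $T_i = S_1 \oplus \cdots \oplus S_i$ of the flag, observes that $X$ is the join $\Tits(S_1) \ast \cdots \ast \Tits(S_k)$ so that $\RH_{2g-k-1}(X;\Q) \cong \St(S)$, and defines a top-degree chain map $\chainproj_{\hS}$ that deletes $T_1,\ldots,T_{k-1}$ from flags containing all of them and kills the rest; the subtle point there is that this is \emph{not} a chain map in lower degrees, so one must check by hand that it carries top-dimensional cycles to cycles. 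Your presentation-theoretic approach avoids that topological subtlety entirely and keeps everything at the level of apartment symbols, at the cost of more elaborate $\lambda$-count and sign bookkeeping; the paper's approach is slightly less elementary but exposes the join structure of $X$ that makes the target $\St(S)$ conceptually inevitable. A few edge cases in your relation-4 verification deserve an explicit sentence: (i) if no index $\ell$ has $c_\ell = d_\ell + 1$ with $c_i = d_i$ otherwise, every term vanishes and the relation is trivial; (ii) for $j \in J$ with $C_j$ not a basis, you need $\Apartment_{\widetilde C_r} = 0$ (or some $\Apartment_{\oB_i} = 0$) so that the replacement $\proj_{\hS}(\Apartment_{C_j}) \mapsto \Sym(\ldots,\Apartment_{\widetilde C_r},\ldots)$ remains an identity between zeros --- this follows from the observation that, when the $\lambda$-counts match, $C_j$ is a basis if and only if all of its blockwise projections $\oB_i^{(j)}$ are bases; (iii) the reduction to sorted $C$ uses the standard equivariance of the alternating boundary sum under relabeling, which is worth recording explicitly. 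None of these are serious obstructions; they just need to be filled in.
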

\begin{proof}
For $1 \leq i \leq k$, let $n_i = \dim_{\Field_p}(S_i)$, so $n_1 + \cdots + n_k = 2g$.  Also, for $0 \leq i \leq k$ define
$T_i = S_1 \oplus \cdots \oplus S_i$, so
\[0 = T_0 \subsetneq T_1 \subsetneq \cdots \subsetneq T_k = \Field_p^{2g}.\]
Let $X$ be the full subcomplex of the Tits building $\Tits_{2g}(\Field_p)$ spanned by vertices
$V$ (i.e.\ nonzero proper subspaces of $\Field_p^{2g}$) such that there exists some $0 \leq i < k$ with
$T_i \subsetneq V \subsetneq T_{i+1}$.  Such a $V$ can be uniquely written as $V = T_i \boxplus W$ for
some nonzero proper subspace $W$ of $S_{i+1}$; the map $W \mapsto T_i \boxplus W$ identifies $\Tits(S_i)$ with
a subcomplex of $X$.  The simplicial complex $X$ is the join of these subcomplexes, so topologically
$X$ is homeomorphic to the join $\Tits(S_1) \ast \cdots \ast \Tits(S_k)$ and thus is a $(2g-k-1)$-dimensional
simplicial complex which is $(2g-k-2)$-connected and satisfies
\[\RH_{2g-k-1}(X;\Q) \cong \St(S_1) \otimes \cdots \otimes \St(S_k) \cong \Sym(\St(S_1),\ldots,\St(S_k)) = \St(S).\]
Our goal is thus to construct a homomorphism
\begin{equation}
\label{eqn:projectiongoal}
\proj_{\hS}\colon \RH_{2g-2}(\Tits_{2g}(\Field_p);\Q) \longrightarrow \RH_{2g-k-1}(X;\Q).
\end{equation}
satisfying the indicated properties.

Before we do this, we make an observation concerning $X$.  Consider a $q$-simplex $\mu$ of $X$.  By definition,
$\mu$ is a flag
\[0 \subsetneq V_0 \subsetneq V_1 \subsetneq \cdots \subsetneq V_{q} \subsetneq \Field_p^{2g}\]
such that each $V_i$ satisfies 
\[T_{j_i} \subsetneq V_i \subsetneq T_{j_i+1}\]
for some $0 \leq j_i \leq k-1$.  There is thus a unique way of
inserting $T_1,\ldots,T_{k-1}$ into $\eta$ in such a way that it remains a flag.  The result is a
$(q+k-1)$-simplex of $\Tits_{2g}(\Field_p)$.  This establishes a bijection
between the $q$-simplices of $X$ and the $(q+k-1)$-simplices of $\Tits_{2g}(\Field_p)$ whose associated
flags contain $T_1,\ldots,T_{k-1}$.

We now turn to constructing \eqref{eqn:projectiongoal}.  For a simplicial complex 
$Z$, denote by $\RC_{\bullet}(Z;\Q)$ the augmented chain complex that computes
$\RH_{\bullet}(Z;\Q)$, so $\RC_{-1}(Z;\Q) = \Q$.  
We first construct a chain-level homomorphism
\[\chainproj_{\hS}\colon \RC_{2g-2}(\Tits_{2g}(\Field_p);\Q) \longrightarrow \RC_{2g-k-1}(X;\Q).\]
It is enough to define $\chainproj_{\hS}(\eta)$ for a $(2g-2)$-dimensional simplex $\eta$ of $\Tits_{2g}(\Field_p)$, which
we do as follows.
\begin{compactitem}
\item If the flag associated to $\eta$ contains $T_1,\ldots,T_{k-1}$, then define $\chainproj_{\hS}(\eta)$ to be
the $(2g-k-1)$-dimensional simplex of $X$ obtained by
deleting $T_1,\ldots,T_{k-1}$ from $\mu$.
\item Otherwise, define $\chainproj_{\hS}(\mu) = 0$.
\end{compactitem}
Next, we prove that $\chainproj_{\hS}$ induces a map on homology.  We remark that this is not obvious -- though
one could define $\chainproj_{\hS}$ on lower-dimensional simplices of $\Tits_{2g}(\Field_p)$ in an analogous manner,
the result is not a map of chain complexes.  Since $\Tits_{2g}(\Field_p)$ and $X$ are
$(2g-2)$-dimensional and $(2g-k-1)$-dimensional, respectively, we have
$\RH_{2g-2}(\Tits_{2g}(\Field_p);\Q) = \RZ_{2g-2}(\Tits_{2g}(\Field_p);\Q)$ and
$\RH_{2g-k-1}(X;\Q) = \RZ_{2g-k-1}(X;\Q)$.  Consider $z \in \RZ_{2g-2}(\Tits_{2g}(\Field_p);\Q)$.  We must
show that $\chainproj_{\hS}(z) \in \RZ_{2g-k-1}(X;\Q)$.  

This is trivial if $g=1$ (in which case $k=1$ and $\chainproj_{\hS}$ is the identity map),
so assume that
$g \geq 2$.  Letting $\zeta$ be a $(2g-k-2)$-simplex of $X$ and letting
$\partial_{\zeta}\colon \RC_{2g-k-1}(X;\Q) \rightarrow \Q$ be the homomorphism that outputs the coefficient
of $\zeta$ under the simplicial boundary map, this is equivalent to showing that $\partial_{\zeta}(\chainproj_{\hS}(z)) = 0$.
Let $\tzeta$ be the $(2g-3)$-simplex of $\Tits_{2g}(\Field_p)$ obtained by inserting $T_1,\ldots,T_{k-1}$ into the flag
associated to $\zeta$ and let $\partial_{\tzeta}\colon \RC_{2g-2}(\Tits_{2g}(\Field_p);\Q) \rightarrow \Q$ be the
homomorphism that outputs the coefficient of $\tzeta$ under the simplicial boundary map.  It is not
necessarily true that $\partial_{\zeta} \circ \chainproj_{\hS} = \partial_{\tzeta}$; however, this is true up to a sign that
depends on $\zeta$ (the $T_i$ terms that are inserted change the signs in the boundary map).  Since
$z \in \RZ_{2g-2}(\Tits_{2g}(\Field_p);\Q)$, we have $\partial_{\tzeta}(z) = 0$, which implies that
$\partial_{\zeta}(\chainproj_{\hS}(z)) = 0$, as desired.

This completes the construction of $\proj_{\hS}$.  We now verify the two indicated properties.  Let
$B = (\vec{v}_1,\ldots,\vec{v}_{2g})$ be an ordered sequence of $2g$ nonzero vectors in $\Field_p^{2g}$.  The
desired formulas are trivial if $\Apartment_B = 0$, so we can assume that the $\vec{v}_i$ form a basis for 
$\Field_p^{2g}$.  For $m \geq 1$, let $\cSym(m)$ denote the symmetric group on $m$ letters.
For $\sigma \in \cSym(2g)$, let $F_{\sigma}$ denote the
element of $\RC_{2g-2}(\Tits_{2g}(\Field_p);\Q)$ corresponding to the flag
\begin{equation}
\label{eqn:expandflag}
0 \subsetneq \Span{\vec{v}_{\sigma(1)}} \subsetneq \Span{\vec{v}_{\sigma(1)},\vec{v}_{\sigma(2)}} \subsetneq \cdots \subsetneq \Span{\vec{v}_{\sigma(1)},\vec{v}_{\sigma(2)},\ldots,\vec{v}_{\sigma(2g)}} = \Field_p^{2g}.
\end{equation}
By definition, the apartment $\Apartment_B$ equals $(2g-2)$-cycle 
\begin{equation}
\label{eqn:expandapartment}
\sum_{\sigma \in \cSym(2g)} (-1)^{\sigma} F_{\sigma}.
\end{equation}
in $\Tits_{2g}(\Field_p)$.
Using this formula, we verify the two indicated properties as follows.
\begin{compactitem}
\item Assume first that $B$ is weakly compatible with $\hS$ and $(\oB_1,\ldots,\oB_k)$ is its projection to $\hS$.
For $\sigma \in \cSym(2g)$, the value $\chainproj_{\hS}(F_{\sigma})$ is nonzero precisely when each $T_i$
appears in \eqref{eqn:expandflag}.  Since $B$ is weakly compatible with $\hS$, this holds for $\sigma = \id$, and
from this we see that this holds precisely for $\sigma \in \cSym(n_1) \times \cdots \times \cSym(n_k) \subset \cSym(2g)$.
It follows that
\begin{align*}
\proj_{\hS}\left(\sum_{\sigma \in \cSym\left(2g\right)} \left(-1\right)^{\sigma} F_{\sigma}\right) &= \sum_{\sigma \in \cSym\left(n_1\right) \times \cdots \times \cSym\left(n_k\right)} (-1)^{\sigma} \chainproj_{\hS}\left(F_{\sigma}\right) \\
&= \Sym\left(\Apartment_{\oB_1},\ldots,\Apartment_{\oB_k}\right) \in \RH_{2g-k-1}\left(X;\Q\right),
\end{align*}
as desired.
\item If no reordering of the vectors in $B$ is weakly compatible with $\hS$, then $\chainproj_{\hS}$
takes every term of \eqref{eqn:expandapartment} to $0$, so $\proj_{\hS}(\Apartment_B) = 0$.\qedhere
\end{compactitem}
\end{proof}

We now prove Lemma \ref{lemma:projectionmapunordered}.

\begin{proof}[Proof of Lemma \ref{lemma:projectionmapunordered}]
We first recall the statement.  Let $p$ be a prime and let $g \geq 1$.  Let
$S = \{S_1,\ldots,S_k\}$ be a symplectic splitting of $\Field_p^{2g}$.  Our goal is to construct
a linear map $\pi_S\colon \St_{2g}(\Field_p) \rightarrow \St(S)$ such that if
$B_i$ is an ordered sequence of $\dim(S_i)$ nonzero vectors in $S_i$ for $1 \leq i \leq k$, then
$\pi_S(\Apartment_{B_1 \cdots B_k}) = \Sym(\Apartment_{B_1},\ldots,\Apartment_{B_k})$.

Fix an ordering
$\hS = (S_1,\ldots,S_k)$ of $S$.  Let $\cSym(k)$ be the symmetric group on $k$ letters.  For $\sigma \in \cSym(k)$,
define $\sigma(\hS) = (S_{\sigma(1)},\ldots,S_{\sigma(k)})$ and let $\pi_{\sigma(\hS)}$ be the map
given by Lemma \ref{lemma:projectionmapordered}.  We then define
$\pi_S\colon \St_{2g}(\Field_p) \rightarrow \St(S)$ via the formula
\begin{equation}
\label{eqn:projectionmap}
\pi_S(x) = \frac{1}{k!} \sum_{\sigma \in \cSym(k)} \pi_{\sigma(\hS)}(x) \quad \quad (x \in \St_{2g}(\Field_p)).
\end{equation}
This does not depend on the choice of ordering $\hS$.  

We now verify the desired formula.  For $1 \leq i \leq k$, let $B_i$ be an ordered sequence of $\dim(S_i)$
nonzero vectors in $S_i$.  Since $\dim(S_i)$ is even for all $i$, we have
\[\Apartment_{B_{\sigma(1)} \cdots B_{\sigma(k)}} = \Apartment_{B_1 \cdots B_k} \quad \quad (\sigma \in \cSym(k)).\]
It follows that
\begin{align*}
\pi_S(\Apartment_{B_1 \cdots B_k}) &= \frac{1}{k!} \sum_{\sigma \in \cSym(k)} \pi_{\sigma(\hS)}(\Apartment_{B_1 \cdots B_k}) \\
&= \frac{1}{k!} \sum_{\sigma \in \cSym(k)} \pi_{\sigma(\hS)}(\Apartment_{B_{\sigma(1)} \cdots B_{\sigma(k)}}) \\
&= \frac{1}{k!} \sum_{\sigma \in \cSym(k)} \Sym(\Apartment_{B_{\sigma(1)}},\ldots,\Apartment_{B_{\sigma(k)}}) \\
&= \Sym(\Apartment_{B_1},\ldots,\Apartment_{B_k}),
\end{align*}
as desired.
\end{proof}

\subsection{Cross-projections: the proof of Lemma \ref{lemma:decomposecrossproj}}
\label{section:decomposecrossproj}

This section is devoted to the proof of Lemma \ref{lemma:decomposecrossproj}.

\begin{proof}[Proof of Lemma \ref{lemma:decomposecrossproj}]
We first recall the statement.  Let $p$ be a prime and let $g \geq 1$.  
Let $S,S' \in \fS_g$ be such that $S \npreceq S'$.  Letting $\pi_S\colon \St_{2g}(\Field_p) \rightarrow \St(S)$
be the projection map from Lemma \ref{lemma:projectionmapunordered}, we must prove that
$\pi_S(\bV_g(S')) \subset \StSep(S)$.  Recall from \eqref{eqn:projectionmap} that $\pi_S$ is a
linear combination of the maps $\pi_{\hS}$ provided by Lemma \ref{lemma:projectionmapordered}, where
$\hS$ ranges over the various orderings of $S$.  Fixing some ordering $\hS = (S_1,\ldots,S_k)$ of
$S$, it is thus enough prove that $\pi_{\hS}(\bV_g(S')) \subset \StSep(S)$.  

Write $S' = \{S_1',\ldots,S_{\ell}'\}$.  Keeping in mind that reordering the vectors making up an apartment only
changes the apartment by a sign, $\bV_g(S')$ is generated by elements $\Apartment_B$, where $B$ is an
ordered basis for $\Field_p^{2g}$ such that every vector in $B$ lies in some $S_i'$.  For such an
$\Apartment_B$, we must show that $\pi_{\hS}(\Apartment_B) \in \StSep(S)$.  If $B$ cannot be reordered
so as to be weakly compatible with $\hS$, then Lemma \ref{lemma:projectionmapordered} says that
$\pi_{\hS}(\Apartment_B) = 0$, proving the lemma in this case.  We can thus assume that $B$ can be
reordered so as to be weakly compatible with $\hS$.  Reorder it in this way (changing it by a sign), and
let $B = B_1 \cdots B_k$ be its factorization and $(\oB_1,\ldots,\oB_k)$ be its projection.  By definition,
for $1 \leq i \leq k$ this means that $B_i \subset S_1 \boxplus \cdots \boxplus S_i$ (where recall that
$\boxplus$ denotes the internal direct sum) and that $\oB_i$
is the image of $B_i$ under the orthogonal projection $\Field_p^{2g} \rightarrow S_i$.
Lemma
\ref{lemma:projectionmapordered} says that 
\[\pi_{\hS}(\Apartment_B) = \Sym(\Apartment_{\oB_1},\ldots,\Apartment_{\oB_k}) \in \St(S).\]
We must prove that 
\begin{equation}
\label{eqn:crossprojtoprove}
\Sym(\Apartment_{\oB_1},\ldots,\Apartment_{\oB_k}) \in \StSep(S).
\end{equation}
We will do this by using $S$ and $S'$ to construct a symplectic splitting $\oT$ that refines both
$S$ and $S'$ such that
\[\Sym(\Apartment_{\oB_1},\ldots,\Apartment_{\oB_k}) \in \St(\oT);\]
since $S'$ is not a refinement of $S$, this will imply \eqref{eqn:crossprojtoprove}.

Since permuting the vectors in an apartment class only changes the apartment class by a sign,
we can permute the vectors in each $B_i$ freely.  Doing this, we can assume without loss of
generality that each $B_i$ is of the form $B_i = B_{i1} \cdots B_{i\ell}$, where $B_{ij}$ is a sequence
of nonzero vectors (possibly the empty sequence) lying in $S_j'$.  For $1 \leq i \leq k$ and
$1 \leq j \leq \ell$, let $\oB_{ij}$ be the image of $B_{ij}$ in $\oB_i$, so 
$\oB_i = \oB_{i1} \cdots \oB_{i\ell}$.

For $1 \leq i \leq k$ and $1 \leq j \leq \ell$, let $T_{ij}$ be the span of the vectors in $B_{ij}$ and
let $\oT_{ij}$ be the span of the vectors in $\oB_{ij}$.  By construction, we have
\[T_{ij} \subset S_1 \boxplus \cdots \boxplus S_i \quad \text{and} \quad T_{ij} \subset S_j' \quad \text{and} \quad \oT_{ij} \subset S_i.\]
One might worry that in orthogonally projecting $T_{ij}$ into $S_i$ to form $\oT_{ij}$ we might have destroyed the fact
that it lies in $S_j'$; however, the following claim shows that this has not happened.
\begin{claim}
For $1 \leq i \leq k$ and $1 \leq j \leq \ell$, we have $\oT_{ij} \subset S_j'$.
\end{claim}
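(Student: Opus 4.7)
The plan is to proceed by induction on $i$. The base case $i = 1$ will be immediate: since $T_{1n} \subseteq W_1 = S_1$, the orthogonal projection onto $S_1$ is the identity on $T_{1n}$, giving $\oT_{1n} = T_{1n} \subset S_n'$.

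For the inductive step, I would first upgrade the inductive hypothesis. From $\oT_{mn} \subset S_n'$ for $m < i$, together with the fact that $\oB_m = \bigcup_n \oB_{mn}$ is a basis of $S_m$, one gets $\sum_n \oT_{mn} = S_m$, and then a dimension count using the pairwise linear independence of the $S_m \cap S_n'$ forces $\oT_{mn} = S_m \cap S_n'$ and $S_m = \bigoplus_n (S_m \cap S_n')$. Since $S_m$ is symplectic and this is an $\omega$-orthogonal decomposition, Lemma \ref{lemma:orthogonalsymplectic} will imply each $S_m \cap S_n'$ is itself symplectic. Setting $U = S_1 \boxplus \cdots \boxplus S_{i-1}$, it follows that $U \cap S_{n'}' = \bigoplus_{m < i}(S_m \cap S_{n'}')$ is an $\omega$-orthogonal direct sum of symplectic subspaces, hence itself a symplectic subspace of $\Field_p^{2g}$; this structural output of the induction is the key payoff.

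Next, for $v \in T_{ij}$, I would write $v = u + v_i$ with $u \in U$ and $v_i \in S_i$, and aim to prove $v_i \in S_j'$. Letting $\pi'_{n'}\colon \Field_p^{2g} \to S_{n'}'$ denote the orthogonal projection associated to the splitting $S'$, decompose $u = \sum_{n'} u_{n'}$ with $u_{n'} \in U \cap S_{n'}'$. Applying $\pi'_{n'}$ to $v = u + v_i$ for $n' \neq j$, the hypothesis $v \in S_j'$ gives $0 = u_{n'} + \pi'_{n'}(v_i)$, which places $\pi'_{n'}(v_i) = -u_{n'}$ inside $U \cap S_{n'}'$.

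The main obstacle is to conclude $\pi'_{n'}(v_i) = 0$, and this is where the symplecticness of $U \cap S_{n'}'$ will be essential. The observation is that $v_i \in S_i$ and $S_i \perp_\omega U$, so $\omega(v_i, x) = 0$ for every $x \in U \cap S_{n'}'$. Expanding $v_i = \sum_{n''} \pi'_{n''}(v_i)$ and using that $S_{n''}'$ is $\omega$-orthogonal to $S_{n'}'$ for $n'' \neq n'$, the pairing $\omega(v_i, x)$ will collapse to $\omega(\pi'_{n'}(v_i), x)$ for $x \in U \cap S_{n'}'$. Hence $\pi'_{n'}(v_i)$ must lie in the $\omega$-radical of $U \cap S_{n'}'$, which is zero precisely because $U \cap S_{n'}'$ is symplectic. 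This forces $\pi'_{n'}(v_i) = 0$ for every $n' \neq j$, so $v_i \in S_j'$, closing the induction.
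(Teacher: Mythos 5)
Your proof is correct, and it rests on the same core idea as the paper's: use Lemma \ref{lemma:orthogonalsymplectic} to show that the relevant intersections are symplectic subspaces, then use nondegeneracy of the form to kill the unwanted components. However, you introduce an induction on $i$ that the paper avoids, and which is not needed. The paper sets $U_{j'}$ to be the span of the subspaces $T_{i'j'}$ for $i' < i$ --- crucially working with the $T_{i'j'}$ rather than the projected $\oT_{i'j'}$. Since by construction each $T_{i'j'}$ already lies in $S_{j'}'$ (the vectors in $B_{i'j'}$ are chosen in $S_{j'}'$) and the $U_{j'}$ jointly span $S_{1,i-1} = S_1 \boxplus \cdots \boxplus S_{i-1}$ and are pairwise orthogonal, one can apply Lemma \ref{lemma:orthogonalsymplectic} directly to conclude each $U_{j'}$ is symplectic, with no inductive hypothesis at all. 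A short argument then shows $U_{j'} = U \cap S_{j'}'$ (compare dimensions in $\bigoplus_{j'} U_{j'} \subseteq \bigoplus_{j'}(U \cap S_{j'}') \subseteq U$), so your ``key payoff'' is available immediately. Your roundabout route through $\oT_{mn} = S_m \cap S_n'$ is what forces you into the induction; it is a valid argument but does extra work. The final computations also differ cosmetically: the paper isolates the $S_{1,i-1}$-component $\ox_1$ of $x$ and argues that each of its $U_{j'}$-components with $j' \neq j$ vanishes because $x \perp U_{j'}$, whereas you work with the $S_i$-component $v_i$ and show $\pi'_{n'}(v_i)$ lies in the radical of $U \cap S_{n'}'$. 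These are two presentations of the same orthogonality computation, and both are correct.
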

\begin{proof}[Proof of claim]
Consider $x \in T_{ij}$.  Define $S_{1,i-1} = S_1 \boxplus \cdots \boxplus S_{i-1}$, so
$S_{1,i-1}$ and $S_i$ are orthogonal symplectic subspaces of $\Field_p^{2g}$ satisfying
\[x \in T_{ij} \subset S_{1,i-1} \boxplus S_i.\]
Let $\ox_1 \in S_{1,i-1}$ and $\ox_2 \in S_i$ be the orthogonal
projections of $x$ to these subspaces, so $x = \ox_1 + \ox_2$.  Our goal is to prove that $\ox_2 \in S_j'$.
Since $x \in S_j'$, this is equivalent to showing that $\ox_1 \in S_j'$.

For $1 \leq j' \leq \ell$, let $U_{j'}$ be the span of the $T_{i' j'}$ for $1 \leq i' \leq i-1$.  We thus have
\[U_{1} \boxplus U_{2} \boxplus \cdots \boxplus U_{\ell} = S_{1,i-1} \quad \text{and} \quad U_{j'} \subset S_{j'}'.\]
Since $U_{j'} \subset S_{j'}'$ and the $S_{j'}'$ are pairwise orthogonal, so are the $U_{j'}$.  Since
the $U_{j'}$ are pairwise orthogonal subspaces of the symplectic subspace $S_{1,i-1}$ that span
$S_{1,i-1}$, Lemma \ref{lemma:orthogonalsymplectic} implies that the $U_{j'}$ themselves are symplectic subspaces.
For $1 \leq j' \leq \ell$, let $\ox_{1,j'}$ be the orthogonal projection of $x$ to $U_{j'}$.  We
then have $\ox_1 = \ox_{1,1} + \ox_{1,2} + \cdots + \ox_{1,\ell}$. 
For $1 \leq j' \leq \ell$ with $j' \neq j$, the subspace $S_j'$ is orthogonal to $U_{j'}$, so the
orthogonal projection of $S_j'$ to $U_{j'}$ is $0$.  Since $x \in S_j'$, this implies that $\ox_{1,j'} = 0$
for $1 \leq j' \leq \ell$ with $j' \neq j$.  We conclude that $\ox_1 = \ox_{1,j} \in S_j'$, as desired.
\end{proof}

Define
\[\oT = \Set{$\oT_{ij}$}{$1 \leq i \leq k$, $1 \leq j \leq \ell$, $\oT_{ij} \neq 0$}.\]
Since $\oT_{ij} \subset S_i \cap S_j'$ and both $S$ and $S'$ are symplectic splittings, it follows that
the elements of $\oT$ are pairwise orthogonal to each other.  They also span $\Field_p^{2g}$, so Lemma \ref{lemma:orthogonalsymplectic}
implies that $\oT$ is a symplectic splitting of $\Field_p^{2g}$.  We have $\oT \preceq S$ and $\oT \preceq S'$; since
$S$ is not a refinement of $S'$, the symplectic splitting $\oT$ must be a proper refinement of $S$.
This implies that there exists some
$1 \leq i \leq k$ such that
\[\Set{$\oT_{ij}$}{$1 \leq j \leq \ell$, $\oT_{ij} \neq 0$} \neq \{S_i\}.\]
Since $\oT_{ij}$ is the span of the vectors in $\oB_{ij}$ and since
$\oB_i = \oB_{i1} \cdots \oB_{i\ell}$, we deduce that $\Apartment_{\oB_i} \in \StSep(S_i)$ and thus
that
\[\Sym(\Apartment_{\oB_1},\ldots,\Apartment_{\oB_{k}}) \in \StSep(S),\]
as desired.
\end{proof}

\section{Bounds}
\label{section:bounds}

In this section, we prove Proposition \ref{proposition:nosepbig}.  We first recall its statement.  Fix a prime $p$.  For
$g \geq 1$, define
\[\theta_g = \frac{\dim_{\Q} \StNS_{2g}(\Field_p)}{|\Sp_{2g}(\Field_p)|} \quad \text{and} \quad \lambda_g = \frac{1}{g(p^{2g}-1)}.\]
Proposition \ref{proposition:nosepbig} asserts that $\theta_g = \lambda_g$ for all $g \geq 1$.

Before we prove this, we must prove two lemmas that assert that the $\theta_g$ and $\lambda_g$ satisfy similar
recurrence relations.  
Given $n \geq 1$, a {\em partition} of $n$ is an expression
\begin{equation}
\label{eqn:partition}
n = n_1 + n_2 + \cdots + n_m
\end{equation}
with 
\[n_1 \geq n_2 \geq \cdots \geq n_m \geq 1.\]
If the numbers that appear in \eqref{eqn:partition} are $a_1 > a_2 > \cdots > a_\ell$ and $a_i$ appears
$r_i$ times, then we will denote the partition by $(a_1^{r_1},\ldots,a_{\ell}^{r_{\ell}})$ and say as shorthand
that
\[(a_1^{r_1},\ldots,a_{\ell}^{r_{\ell}}) \vdash n.\]
Our first lemma is then as follows.  The proof of this lemma is where we use Theorem \ref{theorem:decomposition}.

\begin{lemma}
\label{lemma:thetarecurrence}
Let $g \geq 1$ and let $p$ be a prime.  Then
\[\frac{p^{\binom{2g}{2}}}{|\Sp_{2g}(\Field_p)|} = \sum_{(a_1^{r_1},\ldots,a_{\ell}^{r_{\ell}}) \vdash g} \frac{(\theta_{a_1})^{r_1} (\theta_{a_2})^{r_2} \cdots (\theta_{a_{\ell}})^{r_{\ell}}}{(r_1)! (r_2)! \cdots (r_{\ell})!}.\]
\end{lemma}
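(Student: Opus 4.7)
The plan is to extract the identity directly from Theorem \ref{theorem:decomposition} by comparing dimensions on both sides, then rewriting the resulting orbit sum in terms of the numbers $\theta_a$.

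First I would recall the classical fact that $\dim_{\Q} \St_{2g}(\Field_p) = p^{\binom{2g}{2}}$ (the Steinberg representation of $\SL_{2g}(\Field_p)$ has dimension equal to the $p$-part of the group order). Theorem \ref{theorem:decomposition} then gives
\[
p^{\binom{2g}{2}} = \sum_{S \in \fS_g} \dim_{\Q} \StNS(S).
\]
Since each summand in a symplectic splitting is itself a symplectic subspace, its dimension is even; thus every $S \in \fS_g$ can be assigned a \emph{type} $(a_1^{r_1},\ldots,a_\ell^{r_\ell}) \vdash g$, where $2a_i$ runs over the distinct dimensions of the subspaces in $S$ and $r_i$ records their multiplicity. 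The dimension of $\StNS(S) = \Sym(\StNS(S_1),\ldots,\StNS(S_k))$ depends only on this type, and equals $\prod_i \left(\dim_{\Q} \StNS_{2a_i}(\Field_p)\right)^{r_i}$.

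Next I would count the number of splittings of a given type. The group $\Sp_{2g}(\Field_p)$ acts transitively on the set of \emph{ordered} symplectic splittings with prescribed sequence of dimensions $(2a_1, \ldots, 2a_k)$: transitivity follows from Witt's extension theorem for symplectic forms, and by Lemma \ref{lemma:orthogonalsymplectic} the stabilizer of such an ordered splitting is $\prod_i \Sp_{2a_i}(\Field_p)$. Passing to unordered splittings of type $(a_1^{r_1},\ldots,a_\ell^{r_\ell})$ requires dividing by $\prod_i r_i!$ to account for permuting subspaces of equal dimension. Hence the number of symplectic splittings of this type is
\[
\frac{|\Sp_{2g}(\Field_p)|}{\prod_i |\Sp_{2a_i}(\Field_p)|^{r_i} \cdot \prod_i r_i!}.
\]

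Substituting this count and the dimension formula into the decomposition yields
\[
p^{\binom{2g}{2}} \;=\; |\Sp_{2g}(\Field_p)| \sum_{(a_1^{r_1},\ldots,a_\ell^{r_\ell}) \vdash g} \frac{1}{\prod_i r_i!} \prod_i \left(\frac{\dim_{\Q} \StNS_{2a_i}(\Field_p)}{|\Sp_{2a_i}(\Field_p)|}\right)^{r_i}.
\]
Recognizing $\theta_{a_i} = \dim_{\Q}\StNS_{2a_i}(\Field_p)/|\Sp_{2a_i}(\Field_p)|$ and dividing through by $|\Sp_{2g}(\Field_p)|$ gives exactly the asserted identity. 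There is no real obstacle here beyond the orbit-counting bookkeeping; the substantive content has already been packaged into Theorem \ref{theorem:decomposition}.
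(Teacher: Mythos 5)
Your proof is correct and follows essentially the same route as the paper's: it invokes the Solomon--Tits dimension formula and Theorem \ref{theorem:decomposition}, then does the same orbit--stabilizer count of symplectic splittings of a given type (with the same correction by $\prod_i r_i!$ when passing from ordered to unordered splittings) before dividing through by $|\Sp_{2g}(\Field_p)|$. The only cosmetic difference is that you cite Witt's theorem to justify transitivity, which the paper leaves implicit.
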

\begin{proof}
We begin with some combinatorial facts about symplectic splittings.
Given an ordered symplectic splitting $\hS = (S_1,\ldots,S_k)$ of $\Field_p^{2g}$ and a partition 
$(a_1^{r_1},\ldots,a_{\ell}^{r_{\ell}}) \vdash g$, 
we say that the {\em type} of $\hS$ is 
$(a_1^{r_1},\ldots,a_{\ell}^{r_{\ell}})$ 
if the sequence of numbers 
\[(\dim_{\Field_p}(S_1), \dim_{\Field_p}(S_2),\ldots,\dim_{\Field_p}(S_k))\]
begins with $r_1$ entries of $2a_1$, then has $r_2$ entries
of $2a_2$, etc.
We claim that there are
\begin{equation}
\label{eqn:numordered}
\frac{|\Sp_{2g}(\Field_p)|}{\prod_{i=1}^{\ell} |\Sp_{2a_i}(\Field_p)|^{r_i}}
\end{equation}
ordered symplectic splittings of type $(a_1^{r_1},\ldots,a_{\ell}^{r_{\ell}})$.  Indeed, the
group $\Sp_{2g}(\Field_p)$ acts transitively on the ordered symplectic splittings of type
$(a_1^{r_1},\ldots,a_{\ell}^{r_{\ell}})$ and
$\prod_{i=1}^{\ell} \Sp_{2a_i}(\Field_p)^{r_i}$
is the stabilizer of one such splitting.

Next, we say that an (unordered) symplectic splitting
has type $(a_1^{r_1},\ldots,a_{\ell}^{r_{\ell}})$ if some ordering of it has that type.
We claim that there are
\begin{equation}
\label{eqn:numunordered}
\frac{|\Sp_{2g}(\Field_p)|}{\prod_{i=1}^{\ell} (r_i)! |\Sp_{2a_i}(\Field_p)|^{r_i}}
\end{equation}
symplectic splittings of type $(a_1^{r_1},\ldots,a_{\ell}^{r_{\ell}})$.  Indeed, there are
$(r_1)! (r_2)! \cdots (r_{\ell})!$ orderings of that type for each unordered splitting, so the
equation follows from \eqref{eqn:numordered} above.

We now turn to the desired identity.  Theorem \ref{theorem:decomposition} implies that
\begin{equation}
\label{eqn:decomp1}
\dim_{\Q} \St_{2g}(\Field_p) = \sum_{S \in \fS_g} \dim_{\Q} \StNS(S).
\end{equation}
The Solomon--Tits theorem \cite[Theorem IV.5.2]{BrownBuildings} implies that 
$\dim_{\Q} \St_{2g}(\Field_p) = p^{\binom{2g}{2}}$.
Also, by definition for $S \in \fS_g$ with $S = \{S_1,\ldots,S_k\}$ we have
$\dim_{\Q} \StNS(S) = \prod_{i=1}^k \StNS(S_i)$.
Plugging these into \eqref{eqn:decomp1} and grouping the elements of $\fS_g$ together by type, we can
use \eqref{eqn:numunordered} to deduce that
\begin{align*}
p^{\binom{2g}{2}} &= \sum_{(a_1^{r_1},\ldots,a_{\ell}^{r_{\ell}}) \vdash g} \frac{|\Sp_{2g}(\Field_p)| (\dim_{\Q} \StNS_{2a_1})^{r_1} (\dim_{\Q} \StNS_{2a_2})^{r_2} \cdots (\dim_{\Q} \StNS_{2a_{\ell}})^{r_{\ell}}}{\prod_{i=1}^{\ell} (r_i)! |\Sp_{2a_i}(\Field_p)|^{r_i}} \\
&= \sum_{(a_1^{r_1},\ldots,a_{\ell}^{r_{\ell}}) \vdash g} \frac{|\Sp_{2g}(\Field_p)| (\theta_{a_1})^{r_1} (\theta_{a_2})^{r_2} \cdots (\theta_{a_{\ell}})^{r_{\ell}}}{(r_1)! (r_2)! \cdots (r_{\ell})!}.
\end{align*}
Dividing both sides by $|\Sp_{2g}(\Field_p)|$ gives the desired result.
\end{proof}

Our second lemma is a similar identity for $\lambda_g = \frac{1}{g(p^{2g}-1)}$.
Its proof was suggested to the authors by
the MathOverflow user ``Lucia''; see \cite{MathOverflow}.

\begin{lemma}
\label{lemma:lambdarecurrence}
Let $g \geq 1$ and let $p$ be a prime.  Then
\[\frac{p^{\binom{2g}{2}}}{|\Sp_{2g}(\Field_p)|} = \sum_{(a_1^{r_1},\ldots,a_{\ell}^{r_{\ell}}) \vdash g} \frac{(\lambda_{a_1})^{r_1} (\lambda_{a_2})^{r_2} \cdots (\lambda_{a_{\ell}})^{r_{\ell}}}{(r_1)! (r_2)! \cdots (r_{\ell})!}.\]
\end{lemma}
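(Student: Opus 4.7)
The identity is purely combinatorial, and the structure of the right-hand side is exactly that of a coefficient of an exponential generating function. My plan is to prove it by computing both sides as coefficients of an explicit power series in $x$.

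\textbf{Step 1: Reformulation as a generating function identity.} Set $c_g = p^{\binom{2g}{2}}/|\Sp_{2g}(\Field_p)|$ with $c_0 = 1$. For any sequence $\{\lambda_n\}_{n \geq 1}$, expanding $\exp$ and regrouping monomials by partition type (using that the partition $(a_1^{r_1},\ldots,a_\ell^{r_\ell})$ corresponds to $m!/(r_1! \cdots r_\ell!)$ compositions of length $m = r_1 + \cdots + r_\ell$, which cancels the $1/m!$ in $\exp$) yields
\[
[x^g]\,\exp\!\left(\sum_{n \geq 1} \lambda_n x^n\right) = \sum_{(a_1^{r_1},\ldots,a_{\ell}^{r_{\ell}}) \vdash g} \frac{(\lambda_{a_1})^{r_1} \cdots (\lambda_{a_\ell})^{r_\ell}}{r_1! \cdots r_\ell!}.
\]
So the lemma reduces to proving the identity of formal power series
\[
\sum_{g \geq 0} c_g\, x^g \;=\; \exp\!\left(\sum_{n \geq 1} \frac{x^n}{n(p^{2n}-1)}\right).
\]

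\textbf{Step 2: Evaluate the exponential in closed form.} Using the geometric series $1/(p^{2n}-1) = \sum_{k \geq 1} p^{-2nk}$, interchange the order of summation to obtain
\[
\sum_{n \geq 1} \frac{x^n}{n(p^{2n}-1)} \;=\; \sum_{k \geq 1}\,\sum_{n \geq 1} \frac{(x p^{-2k})^n}{n} \;=\; -\sum_{k \geq 1} \log\!\bigl(1 - x p^{-2k}\bigr),
\]
by the Taylor expansion of $-\log(1-y)$. Exponentiating gives
\[
\exp\!\left(\sum_{n \geq 1} \frac{x^n}{n(p^{2n}-1)}\right) \;=\; \prod_{k \geq 1} \frac{1}{1 - x p^{-2k}}.
\]

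\textbf{Step 3: Apply Euler's identity and match coefficients.} Setting $q = p^{-2}$, the infinite product becomes $\prod_{k \geq 1}(1 - xq^k)^{-1}$. Euler's $q$-binomial identity (a classical consequence of the expansion $\prod_{k \geq 0}(1-xq^k)^{-1} = \sum_n x^n/(q;q)_n$ applied after $x \mapsto xq$) gives
\[
\prod_{k \geq 1} \frac{1}{1 - xq^k} \;=\; \sum_{n \geq 0} \frac{q^n\, x^n}{\prod_{i=1}^{n}(1 - q^i)}.
\]
Substituting $q = p^{-2}$ and clearing $p^{-2i}$ from each factor in the denominator shows that the coefficient of $x^n$ equals $p^{n(n-1)}/\prod_{i=1}^{n}(p^{2i}-1)$. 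The well-known formula $|\Sp_{2g}(\Field_p)| = p^{g^2}\prod_{i=1}^{g}(p^{2i}-1)$ together with $\binom{2g}{2} = g(2g-1)$ gives $c_g = p^{g(g-1)}/\prod_{i=1}^{g}(p^{2i}-1)$, matching exactly.

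\textbf{Remarks on the difficulty.} None of the steps is deep: the combinatorics of Step 1 is the standard exponential formula, Step 2 is a routine manipulation, and Step 3 is a direct application of Euler's identity. The only place requiring attention is the bookkeeping of powers of $p$ in Step 3, where one must carefully pull out the factors $p^{-2i}$ from $1 - p^{-2i}$ and combine them with the prefactor $q^n = p^{-2n}$ to obtain the exact exponent $n(n-1)$ appearing in $c_n$. This is purely arithmetic, so I expect no substantive obstacle.
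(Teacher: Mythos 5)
Your proof is correct, and its overall architecture matches the paper's: both apply the Exponential Formula to reduce the claim to a formal power series identity $\sum_g c_g x^g = \exp\bigl(\sum_n \lambda_n x^n\bigr)$, evaluate the exponential as an infinite product via a geometric-series expansion of $\tfrac{1}{p^{2n}-1}$ followed by $\log(1-y)$, and conclude via a classical Euler partition identity. The single point of divergence is the choice of geometric series: the paper writes $\tfrac{1}{p^{2n}-1} = -\sum_{h\ge 0} p^{2nh}$, obtaining $\prod_{h\ge 0}(1-p^{2h}x)$ and invoking Euler's $\prod_{h\ge 0}(1-q^h x) = \sum_n \tfrac{(-1)^n q^{\binom{n}{2}}}{(q;q)_n}x^n$ with $q=p^2$, whereas you expand $\tfrac{1}{p^{2n}-1}=\sum_{k\ge 1}p^{-2nk}$, obtaining $\prod_{k\ge 1}(1-p^{-2k}x)^{-1}$ and invoking the companion identity $\prod_{k\ge 0}(1-q^k x)^{-1}=\sum_n \tfrac{x^n}{(q;q)_n}$ with $q=p^{-2}$. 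Your expansion has the small advantage that the intermediate series converges for a genuine prime $p\ge 2$, whereas the paper's $\sum_{h\ge 0}p^{2nh}$ is purely a formal manipulation; both routes collapse, after the bookkeeping you carry out in Step 3, to the same coefficient $p^{g(g-1)}/\prod_{i=1}^g(p^{2i}-1)$ for $c_g$.
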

\begin{proof}
Using the standard formula for $|\Sp_{2g}(\Field_p)|$ (see, e.g., \cite[p. 27]{GroveBook}), the left hand side of the purported identity equals
\begin{align*}
\frac{p^{g(2g-1)}}{(p^{2g}-1)p^{2g-1}(p^{2g-2}-1)p^{2g-3} \cdots (p^2-1)p}
&= \frac{p^{g(2g-1)}}{p^{g^2}(p^{2g}-1)(p^{2g-2}-1)\cdots(p^{2}-1)}\\
&= \frac{p^{g(g-1)}}{(p^{2g}-1)(p^{2g-2}-1)\cdots(p^{2}-1)}.
\end{align*}
We will show that this equals the right hand side using generating functions.
The Exponential Formula \cite[Corollary 5.1.9]{StanleyBookII} gives a formal
power series identity
\begin{equation}
\label{eqn:oneside}
\exp\left(\sum_{g=1}^{\infty} \lambda_g x^g\right) = 1+ \sum_{g=1}^{\infty} \left(\sum_{(a_1^{r_1},\ldots,a_{\ell}^{r_{\ell}}) \vdash g} \frac{(\lambda_{a_1})^{r_1} (\lambda_{a_2})^{r_2} \cdots (\lambda_{a_{\ell}})^{r_{\ell}}}{(r_1)! (r_2)! \cdots (r_{\ell})!}\right) x^g.
\end{equation}
Plugging in our formula $\lambda_g = \frac{1}{g(p^{2g}-1)}$, we have
\begin{align*}
\exp\left(\sum_{g=1}^{\infty} \lambda_g x^g\right) &=
\exp\left(\sum_{g=1}^{\infty}  \frac{x^g}{g(p^{2g}-1)}\right) 
= \exp\left(-\sum_{g=1}^{\infty} \left(\frac{x^g}{g} \sum_{h=0}^{\infty} p^{2gh}\right) \right) \\
&= \exp\left(-\sum_{h=0}^{\infty} \sum_{g=1}^{\infty} \frac{(x p^{2h})^g}{g} \right) 
= \exp\left(\sum_{h=0}^{\infty}\log(1-p^{2h} x)\right) \\
&= \prod_{h=0}^{\infty} (1-p^{2h} x).
\end{align*}
A theorem of Euler (see \cite[Corollary 2.2]{AndrewsBook}) asserts that
\begin{align}
\prod_{h=0}^{\infty} (1-p^{2h} x) &= 1 + \sum_{g=1}^{\infty} \left(\frac{(-1)^g p^{g(g-1)}}{(1-p^{2g})(1-p^{2g-2})\cdots(1-p^2)}\right) x^g \nonumber\\
&= 1 + \sum_{g=1}^{\infty} \left(\frac{p^{g(g-1)}}{(p^{2g}-1)(p^{2g-2}-1)\cdots(p^2-1)}\right) x^g. \label{eqn:otherside}
\end{align}
Comparing the coefficients of $x^g$ in \eqref{eqn:oneside} and \eqref{eqn:otherside} gives the desired identity.
\end{proof}

\begin{proof}[Proof of Proposition \ref{proposition:nosepbig}]
We will prove that $\theta_g = \lambda_g$ for $g \geq 1$ by induction on $g$.  The base case $g=1$ asserts that
\[\frac{\dim_{\Q} \StNS_{2}(\Field_p)}{|\Sp_{2}(\Field_p)|} = \frac{1}{(p^{2}-1)}.\]
Since there are no nontrivial symplectic splittings of $\Field_p^2$, we have $\StNS_2(\Field_p) = \St_2(\Field_p)$.
The Solomon--Tits Theorem \cite[Theorem IV.5.2]{BrownBuildings} says that $\St_2(\Field_p)$ is $p$-dimensional.  What
is more, $|\Sp_{2}(\Field_p)| = (p^2-1)p$ (see, e.g., \cite[p. 27]{GroveBook}).  The above identity follows.

Now assume that $g>1$ and that $\theta_{g'} = \lambda_{g'}$ for all $1 \leq g' < g$.  Applying
Lemmas \ref{lemma:thetarecurrence} and \ref{lemma:lambdarecurrence} together with our inductive hypothesis, we see that
\begin{align*}
\theta_g &= \frac{p^{\binom{2g}{2}}}{|\Sp_{2g}(\Field_p)|} - \sum_{\substack{(a_1^{r_1},\ldots,a_{\ell}^{r_{\ell}}) \vdash g \\ \text{except $(g) \vdash g$}}} \frac{(\theta_{a_1})^{r_1} (\theta_{a_2})^{r_2} \cdots (\theta_{a_{\ell}})^{r_{\ell}}}{(r_1)! (r_2)! \cdots (r_{\ell})!}\\
&= \frac{p^{\binom{2g}{2}}}{|\Sp_{2g}(\Field_p)|} - \sum_{\substack{(a_1^{r_1},\ldots,a_{\ell}^{r_{\ell}}) \vdash g \\ \text{except $(g) \vdash g$}}} \frac{(\lambda_{a_1})^{r_1} (\lambda_{a_2})^{r_2} \cdots (\lambda_{a_{\ell}})^{r_{\ell}}}{(r_1)! (r_2)! \cdots (r_{\ell})!}
= \lambda_g. \qedhere
\end{align*}
\end{proof}

\noindent
\begin{tabular*}{\linewidth}[t]{@{}p{\widthof{E-mail: {\tt neil.fullarton@rice.edu}}+1in}@{}p{\linewidth - \widthof{E-mail: {\tt neil.fullarton@rice.edu}} - 1in}@{}}
{\raggedright
Neil Fullarton\\
Department of Mathematics\\
Rice University, MS 136 \\
6100 Main St.\\
Houston, TX 77005\\
E-mail: {\tt neil.fullarton@rice.edu}}
&
{\raggedright
Andrew Putman\\
Department of Mathematics\\
University of Notre Dame\\
279 Hurley Hall\\ 
Notre Dame, IN 46556\\
E-mail: {\tt andyp@nd.edu}}
\end{tabular*}

\end{document}